\let\old@tocline\@tocline
\let\section@tocline\@tocline
\newcommand{\section@dotsep}{4.5}
\newcommand{\subsection@dotsep}{4.5}
     \leaders\hbox{$\m@th
        \mkern \section@dotsep mu\hbox{.}\mkern \section@dotsep mu$}\hfill
\let\section@tocline\@tocline
\let\@tocline\old@tocline
     \leaders\hbox{$\m@th
        \mkern \subsection@dotsep mu\hbox{.}\mkern \subsection@dotsep mu$}\hfill
\let\subsection@tocline\@tocline
\let\@tocline\old@tocline
\let\old@l@section\l@section
\let\old@l@subsection\l@subsection
\def\@tocwriteb#1#2#3{%
  \begingroup
    \@xp\def\csname #2@tocline\endcsname##1##2##3##4##5##6{%
      \ifnum##1>\c@tocdepth
      \else \sbox\z@{##5\let\indentlabel\@tochangmeasure##6}\fi}%
    \csname l@#2\endcsname{#1{\csname#2name\endcsname}{\@secnumber}{}}%
  \endgroup
  \addcontentsline{toc}{#2}%
    {\protect#1{\csname#2name\endcsname}{\@secnumber}{#3}}}%
\newlength{\@tocsectionindent}
\newlength{\@tocsubsectionindent}
\newlength{\@tocsubsubsectionindent}
\newlength{\@tocsectionnumwidth}
\newlength{\@tocsubsectionnumwidth}
\newlength{\@tocsubsubsectionnumwidth}
\newcommand{\settocsectionnumwidth}[1]{\setlength{\@tocsectionnumwidth}{#1}}
\newcommand{\settocsubsectionnumwidth}[1]{\setlength{\@tocsubsectionnumwidth}{#1}}
\newcommand{\settocsubsubsectionnumwidth}[1]{\setlength{\@tocsubsubsectionnumwidth}{#1}}
\newcommand{\settocsectionindent}[1]{\setlength{\@tocsectionindent}{#1}}
\newcommand{\settocsubsectionindent}[1]{\setlength{\@tocsubsectionindent}{#1}}
\newcommand{\settocsubsubsectionindent}[1]{\setlength{\@tocsubsubsectionindent}{#1}}
\renewcommand{\l@section}{\section@tocline{1}{\@tocsectionvskip}{\@tocsectionindent}{}{\@tocsectionformat}}%
\renewcommand{\l@subsection}{\subsection@tocline{1}{\@tocsubsectionvskip}{\@tocsubsectionindent}{}{\@tocsubsectionformat}}%
\renewcommand{\l@subsubsection}{\subsubsection@tocline{1}{\@tocsubsubsectionvskip}{\@tocsubsubsectionindent}{}{\@tocsubsubsectionformat}}%
\newcommand{\@tocsectionformat}{}
\newcommand{\@tocsubsectionformat}{}
\newcommand{\@tocsubsubsectionformat}{}
\def\csname toc@1format\endcsname{\@tocsectionformat}
\def\csname toc@2format\endcsname{\@tocsubsectionformat}
\def\csname toc@3format\endcsname{\@tocsubsubsectionformat}
\newcommand{\settocsectionformat}[1]{\renewcommand{\@tocsectionformat}{#1}}
\newcommand{\settocsubsectionformat}[1]{\renewcommand{\@tocsubsectionformat}{#1}}
\newcommand{\settocsubsubsectionformat}[1]{\renewcommand{\@tocsubsubsectionformat}{#1}}
\newlength{\@tocsectionvskip}
\newcommand{\settocsectionvskip}[1]{\setlength{\@tocsectionvskip}{#1}}
\newlength{\@tocsubsectionvskip}
\newcommand{\settocsubsectionvskip}[1]{\setlength{\@tocsubsectionvskip}{#1}}
\newlength{\@tocsubsubsectionvskip}
\newcommand{\settocsubsubsectionvskip}[1]{\setlength{\@tocsubsubsectionvskip}{#1}}
\patchcmd{\tocsection}{\indentlabel}{\makebox[\@tocsectionnumwidth][l]}{}{}
\patchcmd{\tocsubsection}{\indentlabel}{\makebox[\@tocsubsectionnumwidth][l]}{}{}
\patchcmd{\tocsubsubsection}{\indentlabel}{\makebox[\@tocsubsubsectionnumwidth][l]}{}{}
\newcommand{\@sectypepnumformat}{}
\renewcommand{\contentsline}[1]{%
  \expandafter\let\expandafter\@sectypepnumformat\csname @toc#1pnumformat\endcsname%
  \csname l@#1\endcsname}
\newcommand{\@tocsectionpnumformat}{}
\newcommand{\@tocsubsectionpnumformat}{}
\newcommand{\@tocsubsubsectionpnumformat}{}
\newcommand{\setsectionpnumformat}[1]{\renewcommand{\@tocsectionpnumformat}{#1}}
\newcommand{\setsubsectionpnumformat}[1]{\renewcommand{\@tocsubsectionpnumformat}{#1}}
\newcommand{\setsubsubsectionpnumformat}[1]{\renewcommand{\@tocsubsubsectionpnumformat}{#1}}
\renewcommand{\@tocpagenum}[1]{%
  \hfill {\mdseries\@sectypepnumformat #1}}
\let\oldappendix\appendix
\renewcommand{\appendix}{%
  \leavevmode\oldappendix%
  \addtocontents{toc}{%
    \protect\settowidth{\protect\@tocsectionnumwidth}{\protect\@tocsectionformat\sectionname\space}%
    \protect\addtolength{\protect\@tocsectionnumwidth}{2em}}%
}
\definecolor{blue}{rgb}{0.9,0.0,0.9}
\numberwithin{equation}{section}
\theoremstyle{plain}
\newtheorem{theorem}{Theorem}[section]
\newtheorem{lemma}[theorem]{Lemma}
\newtheorem{proposition}[theorem]{Proposition}
\newtheorem{corollary}[theorem]{Corollary}
\theoremstyle{definition}
\newtheorem{definition}[theorem]{Definition}
\newtheorem{remark}[theorem]{Remark}
\newtheorem{question}[theorem]{Question}
\begin{document}

\title[Nonlinear Lie-Type Derivations of Incidence Algebras]
{Nonlinear Lie-Type Derivations of finitary Incidence Algebras and Related Topics}

\author{Yuping Yang}
\address{Yang: School of Mathematics and statistics, Southwest University, Chongqing 400715, P. R. China}
\email{yupingyang@swu.edu.cn}


\author{Feng Wei*}\thanks{*Corresponding author}
\address{Wei: School of Mathematics and Statistics, Beijing
Institute of Technology, Beijing, 100081, China}
\email{daoshuo@hotmail.com} \email{daoshuowei@gmail.com}

\begin{abstract}
This is a continuation of our earlier works \cite{KhrypchenkoWei, Yang20211, Yang20212} 
with respect to (non-)linear Lie-type derivations of finitary incidence algebras. 
Let $X$ be a pre-ordered set, $\mathcal{R}$ be a $2$-torsionfree and $(n-1)$-torsionfree 
commutative ring with identity, where $n\geq 2$ is an integer.
Let $FI(X,\mathcal{R})$ be the finitary incidence algebra of $X$ over $\mathcal{R}$.  In this paper, 
a complete clarification is obtained for the structure of nonlinear Lie-type derivations of $FI(X,\mathcal{R})$.
 We introduce a new class of derivations on $FI(X,\mathcal{R})$ named inner-like derivations, and prove 
 that each nonlinear Lie $n$-derivation on $FI(X,\mathcal{R})$ is the sum of an inner-like derivation, a transitive induced derivation and 
a quasi-additive induced Lie $n$-derivation. Furthermore, if $X$ is finite, we show that a 
quasi-additive induced Lie $n$-derivation can be expressed as the sum of 
an additive induced Lie derivation and a central-valued map annihilating all $(n-1)$-th commutators. 
We also provide a sufficient and necessary condition such that every nonlinear Lie $n$-derivation of $FI(X,\mathcal{R})$ is of proper form. 
Some related topics for further research are proposed in the last section of this article.
 \end{abstract}

\subjclass[2010]{16S50, 16S60, 06A11, 16W25, 16W10}

\keywords{Finitary incidence algebra, nonlinear Lie-type derivation, derivation}

\thanks{The work of the first author is supported by National Natural Science Foundation of China (No. 11701468).}

\date{\today}

\maketitle

\tableofcontents

\setcounter{section}{-1}

\section{Introduction}\label{xxsec0}

Let $\mathcal{A}$ be an associative algebra over a commutative ring $\mathcal{R}$. 
Let $[\ ,\ ]$ and $\circ$ be the {\it Lie product} and {\it Jordan product} respectively, i.e., $[x,y]=xy-yx$ and $x\circ y=xy+yx$ for all $x,y\in \mathcal{A}$. 
Then $(\mathcal{A},[\ ,\ ])$  is a Lie algebra and $(\mathcal{A},\circ)$ is a Jordan algebra. It is a fascinating topic to study the 
connection between the associative, Lie and Jordan structures of $\mathcal{A}$. In this field, there are two classes of important algebraic 
maps: algebra homomorphisms and differential operators. For example Jordan homomorphisms, Lie homomorphisms,
 Jordan derivations and Lie derivations. In the AMS Hour Talk of 1961, Herstein proposed many problems concerning the structures of 
Jordan and Lie derivations in associative simple and prime rings~\cite{Her}. Roughly speaking, 
he conjectured that these derivations are all of the proper or standard form. The renowned 
Herstein's Lie-type mapping research program was formulated since then. Martindale 
gave a driving force in this program under the assumption that the rings contain some 
nontrivial idempotents \cite{Mar64, Mar69}. The first idempotent-free result 
on Lie-type maps was obtained by Bre\v sar in \cite{Bre93}.

Recall that  an $\mathcal{R}$-linear map
$D: \mathcal{A}\longrightarrow \mathcal{A}$ is called a \textit{derivation} if $D(xy)=D(x)y+xD(y)$ for all $x,y\in \mathcal{A}$, and
an $\mathcal{R}$-linear map $L: \mathcal{A}\longrightarrow \mathcal{A}$ is called a \textit{Lie derivation} if
$$
L([x,y])=[L(x),y]+[x,L(y)]
$$
for all $x,y\in \mathcal{A}$.  Obviously, a derivation is a Lie derivation. But the converse 
statements are not true in general. For instance, suppose that $d: \mathcal{A}\longrightarrow 
A$ is a derivation and that $\tau: \mathcal{A}\longrightarrow \mathcal{C(A)}$ is an $\mathcal{R}$-linear 
map from $\mathcal{A}$ into its center $\mathcal{C(A)}$ such that $\tau([x,y])=0$ for all $x, y\in \mathcal{A}$. 
Then $d+\tau$ is a Lie derivation of $\mathcal{A}$, but it is not necessarily a derivation
of $\mathcal{A}$.  For each positive integer $n$, we define inductively a polynomial $p_n(x_1,x_2,\cdots,x_n)$ as follows:
$$
\begin{aligned}
p_1(x_1)&=x_1,\\
p_2(x_1,x_2)&=[x_1,x_2]=x_1x_2-x_2x_1,\\
p_3(x_1,x_2,x_3)&=[p_2(x_1,x_2),x_3]=[[x_1,x_2],x_3],\\
\vdots \ \ \ \ \ \ & \ \ \ \ \ \ \ \ \ \ \ \ \ \vdots \\
p_n(x_1,x_2,\cdots,x_n)&=[p_{n-1}(x_1,x_2,\cdots,x_{n-1}),x_n]\\
&=\underbrace{[[\cdots [[}_{n-1}x_1,  x_2], x_3],  \cdots,  x_{n-1}],  x_n].
\end{aligned}\eqno{(\clubsuit)}
$$
When $x_1,\cdots,x_n\in \mathcal{A}$, $n\geq 2$, then element $p_n(x_1,x_2,\cdots,x_n)$ is said to be an
$(n-1)$-\textit{th commutator} of $\mathcal{A}$. An $\mathcal{R}$-linear map $L:
\mathcal{A}\longrightarrow  \mathcal{A}$ is called a
\textit{Lie $n$-derivation} if
$$ 
L(p_n(x_1,x_2,\dots,x_n))=\sum_{k=1}^n
p_n(x_1,\dots,x_{k-1}, L(x_k),x_{k+1},\dots,x_n) \eqno{(\spadesuit)} 
$$
holds for all $x_1,x_2,\dots,x_n\in \mathcal{A}$. Lie $n$-derivations with $n\geq 2$ are called \textit{lie-type derivations}.

Lie $n$-derivation was introduced by Abdullaev
\cite{Abdullaev}. In particular, he showed that every Lie $n$-derivation $L$ on a von Neumann algebra $\mathcal{M}$ 
without central summands of type $\mathcal{I}_1$ can be decomposed as $L=D+\kappa$, where $D$ is an ordinary 
derivation on $\mathcal{M}$ and $\kappa$ is a center-valued map
annihilating all $(n-1)$-th commutators.  A Lie $n$-derivation of $\mathcal{A}$ is called \textit{proper} 
if it is the sum of a derivation and a central-valued linear map annihilating all $(n-1)$-th commutator of $\mathcal{A}$.
The question of whether each Lie-type derivation on a given algebra has the proper form  is extensively 
studied, see \cite{Bre93, Mar64, Qi1, Qi2, Wang, YuZhang2}. 
These works totally fulfill the Herstein's program in the background of noncommutative algebras and operator algebras.  

For the afore-mentioned definitions, if the $\mathcal{R}$-linear assumption is removed, then the 
corresponding derivation and Lie $n$-derivation become \textit{nonlinear derivation} and  \textit{nonlinear Lie $n$-derivation} respectively.
All the nonlinear Lie $n$-derivations with $n\geq 2$ are considered as \textit{nonlinear Lie-type derivations}. Thus far, 
plenty of interesting results have been obtained in this vein. In \cite{ChenZhang}, Chen and Zhang studied nonlinear Lie derivations of triangular algebras. They proved that every nonlinear 
Lie derivation of a triangular algebra is the sum of an additive derivation and a central-valued map annihilating all the
commutators.  Ji et al \cite{JiLiuZhao} obtained an analogous result for nonlinear Lie triple derivations 
of triangular algebras.  Benkovi\v c and Eremita \cite{BenkovicEremita} showed that every nonlinear Lie $n$-derivation 
on a class of triangular algebras can be expressed as the sum of an additive derivation and a central-valued map annihilating all $(n-1)$-th commutators. This result was 
extended to the case of generalized matrix algebras by Wang ad Wang \cite{WangWang}. Fosner, 
Wei and Xiao \cite{FosnerWeiXiao,XiaoWei} studied nonlinear lie-type derivations on full matrix algebras and von Neumann algebras.
 As a summary, the previous works showed that nonlinear lie-type derivations of triangular algebras, generalized 
 matrix algebras and some operator algebras are of proper forms.
This inspired us to consider what we can say about nonlinear Lie-type derivations of finitary incidence algebras.

Now let us introduce some basic facts about finitary incidence algebras.
Let $(X,\leqslant)$ be a pre-ordered set. Denote by $I(X,\mathcal{R})$ be the set of functions
$$
I(X,\mathcal{R}):=\{f: X\times X\longrightarrow \mathcal{R}\mid f(x,y)=0\ \text{if}\ x\nleqslant y\}.
$$
It is clear that $I(X,\mathcal{R})$ is a $\mathcal{R}$-module, which is called the \textit{incidence space} 
of $X$ over $\mathcal{R}$.  A function $f\in  I(X,\mathcal{R})$ is said to be \textit{finitary} 
if for all $u<v$, there are finite number $u\leq x<y\leq v$ such that 
$f(x,y)\neq 0$. Let $FI(X,\mathcal{R})\subset I(X,\mathcal{R})$ be the set of finitary functions. 
Then $FI(X,\mathcal{R})$ is an algebra which is called {\it finitary incidence algebra} of $X$ over $\mathcal{R}$, with multiplication  given by the convolution 
$$
\begin{aligned}
(fg)(x,y)&=\sum_{x\leqslant z\leqslant y}f(x,z)g(z,y).
\end{aligned}
$$
for all $f,g\in I(X,\mathcal{R})$ and $x,y\in X$. When $X$ is locally finite, i.e., for all $x\leq y$, 
there are finite many $z\in X$ such that $x\leq z\leq y$, 
then $FI(X,\mathcal{R})=I(X,\mathcal{R})$ are called \textit{incidence algebra} of $X$ over $\mathcal{R}$.
It is also helpful to point out that the full matrix algebra $\mathcal{{}M}_n(\mathcal{R})$,
the upper (or lower) triangular matrix algebras $\mathcal{T}_n(\mathcal{R})$, and the infinite
triangular matrix algebras $\mathcal{T}_{\infty}(\mathcal{R})$ are examples of finitary incidence algebras. 
In the theory of operator algebras, the incidence algebra of a finite poset is 
referred as a digraph algebra or a finite dimensional CSL algebra. 

The incidence algebra of a poset was first considered by 
Ward in \cite{Ward} as a generalized algebra of arithmetic functions. Rota and Stanley 
developed incidence algebras as fundamental structures of enumerative combinatorial
theory and the allied areas of arithmetic function theory (see~\cite{Stanley2}). Furthermore, 
Stanley~\cite{Stanley1} initiated the study of algebraic maps and combinatorial structure of an 
incidence algebra. Since then, automorphisms, involutions, Lie derivation and Jordan derivations of finitary incidence 
algebras and related topics have been increasingly significant, see 
\cite{Baclawski, BruFK1, BruFK2, BruFS3, BruL, CourtemancheDugasHerden, Dugas, DugasWagner, 
DugasHerdenRebrovich1, DugasHerdenRebrovich2, KKW, Kh-aut, Kh-der, Kh-Jor, Kh-loc, 
KhrypchenkoWei, Kopp, Spiegel1, Spiegel2, SpiegelDonnell, WangXiao, Xiao, XiaoYang, ZhangKhrypchenko} 
and the references therein. Recently, the nonlinear derivations and nonlinear Lie derivations of incidence algebras 
were studied in \cite{Yang2020, Yang20211, Yang20212}. A new phenomenon shows that for general incidence algebras, 
there exist nonlinear Lie derivations which are not proper. In \cite{KhrypchenkoWei}, Khrypchenko and Wei proposed the following question.

\begin{question}\cite[Problem 2.15]{KhrypchenkoWei}\label{xxsec0.1}
Let $X$ be a pre-ordered set, $FI(X,\mathcal{R})$ be the finitary incidence algebra of $X$ over $\mathcal{R}$. Let $L\colon
FI(X,\mathcal{R})\longrightarrow FI(X,\mathcal{R})$ be a nonlinear Lie $n$-derivation. Under what conditions, 
does $L$ has a proper form?
\end{question}

The main aim of the present paper is to study the structure of nonlinear lie-type derivations 
of finitary incidence algebras, and give an answer to Question \ref{xxsec0.1}.
This paper is organized as follows. In Section \ref{xxsec1}, we give some necessary definitions and 
basic facts on finitary incidence algebras and nonlinear Lie-type derivations. 
In Section \ref{xxsec2}, we introduce a new kind of derivations of finitary incidence algebras, 
which are called inner-like derivations. It is shown that for every nonlinear Lie $n$-derivation $L$ 
of $FI(X,\mathcal{R})$, there is an inner-like derivation associated to $L$. Section \ref{xxsec3} 
is devoted to the structure of nonlinear Lie-type derivations of $FI(X,\mathcal{R})$. We prove 
that each nonlinear Lie $n$-derivation of $FI(X,\mathcal{R})$ is 
the sum of an inner-like derivation, a transitive induced derivation and a quasi-additive induced Lie $n$-derivation. 
In Section \ref{xxsec4}, we give a sufficient and necessary 
condition such that every nonlinear Lie-type derivation on $FI(X,\mathcal{R})$ is of proper form. 
Before ending up this article, we introduce some related topics and propose some potential problems for further research 
in Section \ref{xxsec5}.

\section{Preliminaries}
\label{xxsec1}

In this section, we will give some necessary 
definitions and basic facts related to finitary incidence algebras and nonlinear Lie-type derivations.

\subsection{Finitary incidence algebras}
Throughout this paper, let $\mathcal{R}$ be a $2$-torsionfree
and $(n-1)$-torsionfree commutative ring with identity and $FI(X, \mathcal{R})$ be the finitary incidence algebra of 
a pre-ordered set $(X,\leqslant)$ over $\mathcal{R}$.  Here $n\geq 2$ is a fixed integer. 
For each pair $x \le y$ in $X$, let $e_{xy}: X\times X\longrightarrow \mathcal{R}$ be the function given by
\begin{equation*}
e_{xy}(u,v)=\left\{
\begin{aligned} 1, \quad & \text{if} \ (x, y)=(u, v); \\
0,\quad & \text{otherwise.} \end{aligned}\right.
\end{equation*}
It is not difficult to see that the product of $FI(X,\mathcal{R})$ satisfies the relation ${e_{xy}}{e_{uv}} = {\delta _{yu}}{e_{xv}}$,
and that each element $\alpha\in FI(X,\mathcal{R})$ is of the form 
\begin{equation}
\alpha=\sum_{x\leq y}\alpha(x,y)e_{xy}.
\end{equation}
Furthermore, $\alpha$ has the property: for all $u<v$ in $X$, there are finite number of $u\leq x<y\leq v$ with $\alpha(x,y)\neq 0$.
For convenience,  we give the following notations which will be invoked frequently throughout this paper:
\begin{itemize}
\item $x\sim y$ if $x\leq y$ or $y\leq x$;
\item  $x<y$ if $x\leq y$ and $x\neq y$;
\item $x\simeq y$ if $x\leq y$ and $y\leq x$;
\item $\mathcal{S}_{xy} = \{ \, r {e_{xy}}\, |\, r \in \mathcal{R} \} $ for $x<y$;
\item ${\bf E}=\{e_{xy}\mid x<y\}$.
\end{itemize}
For two elements $x,y\in X$, we say that $x$ and $y$ are {\it connected} if there exist 
$x_1,\cdots, x_n\in X$ such that $x\sim x_1, x_1\sim x_2,\cdots , x_n\sim y$. 
Let $X= {\bigsqcup\limits_{i \in \mathcal{J}}}{X_i}$ be the decomposition of $X$ into the union of its connected components, 
where $\mathcal{J}$ is the index set. For each $j\in \mathcal{J}$, we will say that $FI(X_j,\mathcal{R})$ 
is a \textit{connected component} of $FI(X,\mathcal{R})$. 
 Let $\mathcal{C}(X_i, \mathcal{R})$ and $\mathcal{C}(X, \mathcal{R})$ be the centers of $FI(X_i, \mathcal{R})$
and $FI(X, \mathcal{R})$, respectively. Let $I_i: = \sum\limits_{x \in {X_i}} {{e_{xx}}}$ for each $i \in \mathcal{J}$.
Then $\mathcal{C}(X_i, \mathcal{R})$ is $\mathcal{R}$-linearly spanned by $I_i$ and 
$\mathcal{C}(X, \mathcal{R})={\bigoplus\limits_{i \in \mathcal{J}}}{\mathcal{C}(X_i, \mathcal{R})}$. 
Let $\mathcal{D}(X, \mathcal{R})$ be the set of diagonal elements in $FI(X, \mathcal{R})$, i.e., 
\begin{align}
\mathcal{D}(X, \mathcal{R}) = \left\{ \, {\sum\limits_{x \in X} {{d_x}{e_{xx}}} \, \mid \, {d_x} \in \mathcal{R}, x \in X} \, \right\}.
\end{align}

Next we will define an equivalence relation on the set ${\bf E}=\{\, e_{xy}\, |\,  x < y\, \} $, which will be 
used to describe the structure of nonlinear lie-type derivations on $FI(X, \mathcal{R})$. 
Suppose that ${x_1},{x_2}, \cdots ,{x_m}\, (m\geq 2)$ are \textit{m different} elements in $X$. Then 
we say that $\{ {x_1},{x_2}, \cdots ,{x_m}\} $ forms a \emph{cycle} if: 
\begin{enumerate}
\item[(1)] either $m=2$ and ${x_1} \simeq {x_2}$, 
\item[(2)] or $m \ge 3$ and $\{ {x_1} \sim {x_2}, \cdots ,{x_i} \sim {x_{i + 1}}, \cdots ,{x_{m - 1}} \sim {x_m},{x_m} \sim {x_1}\} $.
\end{enumerate}

\begin{definition}
Let $e_{xy}, e_{uv}\in {\bf E}$, we denote ${e_{xy}} \approx {e_{uv}}$ if ${e_{xy}} = {e_{uv}}$, or there
is a cycle which contains both $x \sim y$ and $u \sim v$.
\end{definition}

\begin{lemma}{\rm \cite[Lemma 2.2]{Yang20212}}
The binary relation $\approx$ is an equivalence relation on the set ${\bf E}$.
\end{lemma}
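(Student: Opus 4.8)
The plan is to verify the three defining properties in turn, with all the substance sitting in transitivity. Reflexivity is immediate: $e_{xy}=e_{xy}$ satisfies the first alternative of the definition. Symmetry is built into the definition as well, since the clause ``$e_{xy}=e_{uv}$'' is symmetric and ``there is a cycle containing both $x\sim y$ and $u\sim v$'' is patently symmetric in the two pairs. So it remains to show that $e_{xy}\approx e_{uv}$ and $e_{uv}\approx e_{st}$ force $e_{xy}\approx e_{st}$.

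First I would dispose of the degenerate cases. If $e_{xy}=e_{uv}$, or $e_{uv}=e_{st}$, or $e_{xy}=e_{st}$, the conclusion follows at once from the hypotheses together with the first alternative of the definition. So assume these three elements of ${\bf E}$ are pairwise distinct. Then any cycle witnessing one of the two hypotheses must have at least three vertices, since a $2$-cycle $\{a,b\}$ ``contains'' only the single adjacency $a\sim b$ and hence cannot contain two distinct adjacencies. Thus we are given a cycle $C$ with $|C|\geq 3$ containing both $x\sim y$ and $u\sim v$, and a cycle $D$ with $|D|\geq 3$ containing both $u\sim v$ and $s\sim t$.

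The heart of the argument is a splicing of $C$ and $D$ along their common adjacency $u\sim v$. Deleting this adjacency turns $C$ into a simple path $P$ from $u$ to $v$ passing through $x\sim y$, and turns $D$ into a simple path $Q$ from $u$ to $v$ passing through $s\sim t$; relabel $s,t$ so that $s$ precedes $t$ when $Q$ is traversed from $u$ to $v$. Let $c$ be the last vertex on the initial segment of $Q$ from $u$ to $s$ that also lies on $P$, and let $d$ be the first vertex on the final segment of $Q$ from $t$ to $v$ that also lies on $P$; both exist since $u,v\in P$. Then the $Q$-subpath from $c$ to $s$ meets $P$ only at $c$, the $Q$-subpath from $t$ to $d$ meets $P$ only at $d$, and these two $Q$-subpaths are mutually vertex-disjoint because they lie in disjoint stretches of the simple path $Q$. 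Now split into cases according to the positions of $c$ and $d$ on $P$ relative to the adjacency $x\sim y$. If $x\sim y$ lies on the arc of $P$ joining $c$ and $d$, then the arc of $P$ from $c$ to $d$, followed by the $Q$-subpath from $d$ to $t$, the adjacency $t\sim s$, and the $Q$-subpath from $s$ to $c$, is a simple cycle containing both $x\sim y$ and $s\sim t$. Otherwise $c$ and $d$ lie on the same side of $x\sim y$ along $P$; concatenating the appropriate outer arcs of $P$ with the two $Q$-subpaths and the adjacency $s\sim t$ yields a simple path from $u$ to $v$ threading through both $x\sim y$ and $s\sim t$, and appending the still-available adjacency $u\sim v$ closes it into a cycle. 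In all cases the cycle produced has at least three vertices, because it contains the two distinct adjacencies $x\sim y$ and $s\sim t$, so it is a cycle in the sense of the definition and certifies $e_{xy}\approx e_{st}$.

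The main obstacle is exactly the bookkeeping in this splicing step: one must check that the constructed walk repeats no vertex and really does run through both target adjacencies, the delicate configuration being when $P$ and $Q$ overlap so heavily that $c$ and $d$ fall on the same side of $x\sim y$ — precisely the case handled by re-closing with the deleted adjacency $u\sim v$ rather than along $P$. At a higher level, the statement is an instance of the standard fact that, in any graph, the relation ``equal, or lying on a common cycle'' is an equivalence relation on the edge set whose classes are the biconnected components; the argument above is just a hands-on version of the proof of that fact, adapted to the combinatorial notion of cycle used here.
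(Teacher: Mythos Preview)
The paper does not supply its own proof here; the lemma is quoted verbatim from \cite{Yang20212}, so there is nothing in this source to compare against. Your argument is the standard ``two edges lie in the same block'' proof and is essentially sound, but one step needs a patch. You infer from ``$e_{xy},e_{uv},e_{st}$ pairwise distinct in ${\bf E}$'' that any witnessing cycle has at least three vertices, and at the end that the spliced cycle has at least three vertices because it contains the two \emph{distinct} adjacencies $x\sim y$ and $s\sim t$. In a genuine preorder this inference fails: if $a\simeq b$ then $e_{ab}$ and $e_{ba}$ are distinct elements of ${\bf E}$ sharing the \emph{same} adjacency $a\sim b$, and the $2$-cycle $\{a,b\}$ does legitimately witness $e_{ab}\approx e_{ba}$. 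So ``distinct in ${\bf E}$'' does not yield ``distinct adjacencies''.

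The fix is immediate: before the splicing step, also dispose of the cases $\{x,y\}=\{u,v\}$ (then $D$ already contains the adjacency $x\sim y$ and directly witnesses $e_{xy}\approx e_{st}$), $\{u,v\}=\{s,t\}$ (symmetrically, use $C$), and $\{x,y\}=\{s,t\}$ (then $x\simeq y$, and the $2$-cycle $\{x,y\}$ itself witnesses $e_{xy}\approx e_{st}$). After that reduction the three adjacencies are genuinely pairwise distinct, both $C$ and $D$ are forced to have at least three vertices, and your path-splicing construction goes through verbatim.
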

In the following, we denote the equivalence classes of ${\bf E}$ under the equivalence 
relation $ \approx $ by $\{{\bf E}_i\, |\, i \in \mathcal{I}\, \}$, where
$\mathcal{I}$ is the index set.

\subsection{Nonlinear Lie-type derivations}

Let us introduce several kinds of (nonlinear) Lie-type derivations, which will
be used to describe nonlinear Lie-type derivation of $FI(X,\mathcal{R})$.  
\vspace{2mm}

\textbf{(I) Inner derivation}:

For an arbitrary element $\alpha\in FI(X,\mathcal{R})$, the map
$$
\begin{aligned}
{\rm ad}_{\alpha}: FI(X,\mathcal{R}) &\longrightarrow  FI(X,\mathcal{R}) \\
\beta &\longmapsto [\alpha,\beta]
\end{aligned}
$$
is called an \textit{inner derivation} of $FI(X, \mathcal{R})$. It is well known that each inner derivation is a derivation.

\vspace{2mm}

\textbf{(II) Central-valued map annihilating all $(n-1)$-th commutators}:

Let $\overline{FI(X,\mathcal{R})}$ be the set of $(n-1)$-th commutators, i.e., $$\overline{FI(X,\mathcal{R})}=\{\, p_{n}(x_1,\cdots , x_n)\, |\, x_1,\cdots , x_{n} \in FI(X,\mathcal{R})\, \}.$$
If $\kappa: FI(X,\mathcal{R})\longrightarrow  FI(X,\mathcal{R})$ is a map such that
\begin{eqnarray}
&\kappa(FI(X,\mathcal{R}))\subset \mathcal{C}(X,\mathcal{R}),\\
&\kappa(\overline{FI(X,\mathcal{R})})=0.
 \end{eqnarray}
It is easy to see that $\kappa$ is a nonlinear Lie $n$-derivation on $FI(X,\mathcal{R})$, which is called a 
\textit{central-valued} map annihilating all $(n-1)$-th commutators. 

\vspace{2mm}

\textbf{(III) Transtive induced derivation}:
In the following, we will use the notation $\leqslant$ to present the set $\{(x, y)\mid x\leq y \in X\}$ for convenience.
A map $f: \leqslant \longrightarrow \mathcal{R}$ is called \textit{transitive} if
$$
f(x,y)+f(y,z)=f(x,z)\, \, \,  \text{for all} \, \, \,  x,y,z \in X \ \text{such that} \ x\leq y\leq z.
$$
Note that every map $\sigma:X\longrightarrow \mathcal{R}$ determines a transitive map $(x,y) \longmapsto \sigma(x)-\sigma(y)$, 
and a transitive map of this form is said to be \textit{trivial}.
If $f: \leqslant \longrightarrow \mathcal{R}$ is a transitive map, then we can define an $\mathcal{R}$-linear 
map $L_f: FI(X,\mathcal{R})\longrightarrow FI(X,\mathcal{R})$ by
$$L_f(e_{xy})=f(x,y)e_{xy}\, \, \, \text{for all} \, \, \,  x\leq y \in X.$$
It is not hard to show that $L_f$ is a derivation on $FI(X,\mathcal{R})$ (see \cite{Xiao} for details), 
which is called a \textit{transitive induced derivation}. 
Furthermore, $L_f$ is an inner derivation if and only if $f$ is a trivial transitive map.

\textbf{(IV) Quasi-additive induced Lie $n$-derivation}: 

A map $f:\mathcal{R} \longrightarrow  \mathcal{R}$ is called an \textit{additive derivation} if
for all $r,s \in \mathcal{R}$ we have
\begin{eqnarray}
&f(r + s)  = f(r) + f(s), \\
& f(rs)  = f(r)s + rf(s).
\end{eqnarray}
For an additive derivation $f$ on $\mathcal{R}$, it is clear that $f(0)=f(1)=f(-1)=0$ and $f(-r) = -f(r)$ for all $r\in \mathcal{R}$. 
\begin{definition}
Let ${\bf F}:=\{\, {\bf f}_i \, \mid \,  i\in \mathcal{I}\, \}$ be a family of additive derivations on $\mathcal{R}$. A nonlinear 
Lie $n$-derivation $\psi_{\bf F}$ of $FI(X, \mathcal{R})$ is called an {\bf quasi-additive induced Lie $n$-derivation} 
associated to ${\bf F}$ if $\psi_{\bf F}$ leaves $\mathcal{D}(X, \mathcal{R})$ invariant, and 
\begin{eqnarray}
\psi_{\bf F}(re_{xy})=\mathbf{f}_i(r)e_{xy},\ e_{xy}\in {\bf E}_i,\ r\in \mathcal{R}.\label{eq1.8}
\end{eqnarray}
\end{definition}

It is not difficult to see that a central-valued map annihilating all $(n-1)$-th commutators is a special 
quasi-additive induced Lie $n$-derivation with $\mathbf{f}_i=0$ for all $i\in \mathcal{I}$. 
\begin{proposition}\label{p1.4}
A quasi-additive induced Lie $n$-derivation associated to ${\bf F}=\{\, {\bf f}_i \, | \, i\in \mathcal{I}\, \}$ is uniquely 
determined by ${\bf F}$ up to central-valued maps annihilating all $(n-1)$-th commutators.
\end{proposition}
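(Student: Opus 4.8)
The plan is to prove two complementary statements: first, that any two quasi-additive induced Lie $n$-derivations $\psi_{\bf F}$ and $\psi_{\bf F}'$ associated to the same family ${\bf F}=\{{\bf f}_i\mid i\in\mathcal{I}\}$ differ by a central-valued map annihilating all $(n-1)$-th commutators; second, that adding such a map to $\psi_{\bf F}$ yields again a quasi-additive induced Lie $n$-derivation associated to ${\bf F}$. Together these say precisely that the set of quasi-additive induced Lie $n$-derivations associated to ${\bf F}$ is a coset of the group of central-valued maps annihilating the $(n-1)$-th commutators, which is the content of the proposition. Write $\kappa:=\psi_{\bf F}-\psi_{\bf F}'$. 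Since the defining identity $(\spadesuit)$ is additive in $L$ and each $p_n$ is multilinear in its entries, $\kappa$ is again a nonlinear Lie $n$-derivation of $FI(X,\mathcal{R})$; by \eqref{eq1.8} it kills every $re_{xy}$ with $x<y$, in particular it vanishes on ${\bf E}$; and since $\psi_{\bf F}$ and $\psi_{\bf F}'$ both leave $\mathcal{D}(X,\mathcal{R})$ invariant, so does $\kappa$.

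Two elementary observations organise the proof. The first is that $p_n(x_1,\dots,x_n)=0$ as soon as one $x_k$ lies in $\mathcal{C}(X,\mathcal{R})$, because a central element commutes with everything in the bracket; consequently, once it is known that $\kappa(FI(X,\mathcal{R}))\subseteq\mathcal{C}(X,\mathcal{R})$, the whole right-hand side of $(\spadesuit)$ for $L=\kappa$ collapses to zero, so $\kappa$ automatically annihilates all $(n-1)$-th commutators. The second is that for $x<y$ one has $re_{xy}=p_n(e_{xx},re_{xy},e_{yy},\dots,e_{yy})$, so every element of $\mathcal{S}_{xy}$ is itself an $(n-1)$-th commutator; this makes the converse direction immediate, since a central-valued map annihilating the $(n-1)$-th commutators then vanishes on all $re_{xy}$ and takes values in $\mathcal{C}(X,\mathcal{R})\subseteq\mathcal{D}(X,\mathcal{R})$, so it can be added to $\psi_{\bf F}$ without destroying either \eqref{eq1.8} or the invariance of $\mathcal{D}(X,\mathcal{R})$. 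Thus everything reduces to showing $\kappa(\alpha)\in\mathcal{C}(X,\mathcal{R})$ for every $\alpha\in FI(X,\mathcal{R})$.

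I would obtain this in three evaluations of $(\spadesuit)$. In the first, for $u<v$ and a diagonal element $d$, I would evaluate $(\spadesuit)$ on $(e_{uv},d,e_{vv},\dots,e_{vv})$; since $p_n(e_{uv},d,e_{vv},\dots,e_{vv})=(d(v,v)-d(u,u))e_{uv}$ lies in $\mathcal{S}_{uv}$ and $\kappa({\bf E})=0$, the identity collapses to a multiple of $e_{uv}$; taking $d=e_{vv}$ and using the $(n-1)$-torsionfreeness of $\mathcal{R}$ forces $\kappa(e_{vv})(u,u)=\kappa(e_{vv})(v,v)$, and then the case of a general $d$ gives $\kappa(d)(u,u)=\kappa(d)(v,v)$, so $\kappa(\mathcal{D}(X,\mathcal{R}))\subseteq\mathcal{C}(X,\mathcal{R})$ after chaining along connected components. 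In the second, for $x<y$ I would use the identity $p_n(\alpha,e_{yy},e_{xx},e_{yy},\dots,e_{yy})=-\alpha(x,y)e_{xy}$: evaluating $(\spadesuit)$ on this tuple, all terms coming from the slots $e_{xx}$ and $e_{yy}$ vanish because $\kappa(e_{xx})$ and $\kappa(e_{yy})$ are already central, and what remains is $0=-\kappa(\alpha)(x,y)e_{xy}$, so $\kappa(\alpha)\in\mathcal{D}(X,\mathcal{R})$ for every $\alpha$. In the third, I would evaluate $(\spadesuit)$ on $(e_{uv},e_{vv},\alpha,e_{vv},\dots,e_{vv})$: the left-hand side is $\kappa$ of an element with zero diagonal, hence lies in $\mathcal{D}(X,\mathcal{R})$ by the second step, whereas on the right-hand side the slots $e_{uv}$ and $e_{vv}$ contribute nothing (by $\kappa({\bf E})=0$ and the first step), leaving $[e_{uv},\kappa(\alpha)]=(\kappa(\alpha)(v,v)-\kappa(\alpha)(u,u))e_{uv}\in\mathcal{S}_{uv}$; as $\mathcal{D}(X,\mathcal{R})\cap\mathcal{S}_{uv}=0$, this forces $\kappa(\alpha)(u,u)=\kappa(\alpha)(v,v)$, and again chaining over connected components yields $\kappa(\alpha)\in\mathcal{C}(X,\mathcal{R})$. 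When $n=2$ one observes that a nonlinear Lie derivation is automatically a nonlinear Lie $3$-derivation (iterate $(\spadesuit)$ once), so the threefold evaluations used above remain legitimate.

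The main obstacle is carrying out the second and third evaluations: the point is to choose tuples that force the multilinear bracket $p_n$ to collapse an \emph{arbitrary} finitary $\alpha$ onto a single coordinate $\mathcal{S}_{xy}$, so that the otherwise unconstrained nonlinear map $\kappa$ becomes pinned down there, while at the same time all central contributions on the right-hand side of $(\spadesuit)$ drop out. This demands careful tracking of the positions of the central slots and of the alternating signs produced by iterated brackets with the idempotents $e_{xx}$ and $e_{yy}$, together with a separate (and easy) treatment of the cases $n=2,3$ where a threefold bracket is the shortest one able to isolate a coordinate; the torsion-freeness hypotheses on $\mathcal{R}$ are needed only to cancel the integer multiplicities that arise when an idempotent slot is repeated.
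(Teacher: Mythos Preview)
Your overall architecture matches the paper's exactly: set $\kappa=\psi_{\bf F}-\psi'_{\bf F}$, use that $\kappa$ is a nonlinear Lie $n$-derivation preserving $\mathcal{D}(X,\mathcal{R})$ and killing every $\mathcal{S}_{xy}$, then prove in order that $\kappa(\mathcal{D})\subset\mathcal{C}$, $\kappa(FI)\subset\mathcal{D}$, $\kappa(FI)\subset\mathcal{C}$, and finally read off annihilation of $(n-1)$-th commutators from centrality. Your first and third evaluations work (in the third, the remark that the left-hand element has ``zero diagonal'' is actually unnecessary: by your second step $\kappa$ already sends everything to $\mathcal{D}$, and that alone forces the $\mathcal{S}_{uv}$-part to vanish). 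You also supply the easy converse, which the paper leaves implicit.

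There is, however, a genuine gap in your second evaluation. Your claimed identity
\[
p_n(\alpha,e_{yy},e_{xx},e_{yy},\dots,e_{yy})=-\alpha(x,y)e_{xy}
\]
is only correct when $y\nleq x$. Since the paper works over a \emph{pre-ordered} set, the case $x\simeq y$ (that is, $x<y$ and $y<x$) does occur, and then the bracket picks up an additional term $\pm\alpha(y,x)e_{yx}$. After applying the Lie $n$-derivation identity you are left with
\[
\kappa\bigl(-\alpha(x,y)e_{xy}\pm\alpha(y,x)e_{yx}\bigr)=-\kappa(\alpha)(x,y)e_{xy}\pm\kappa(\alpha)(y,x)e_{yx},
\]
and the left side is \emph{not} known to vanish: $\kappa$ is nonlinear, so you cannot split $\kappa(ae_{xy}+be_{yx})$ as $\kappa(ae_{xy})+\kappa(be_{yx})$. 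The paper closes this gap with one further evaluation, applying $\Psi$ to
\[
p_{\mathbbm{n}}(e_{yx},e_{xx},\dots,e_{xx},\,h_{xy}e_{xy}+h_{yx}e_{yx})=h_{xy}(e_{yy}-e_{xx}),
\]
and then using that $\kappa(h_{xy}(e_{yy}-e_{xx}))\in\mathcal{C}(X,\mathcal{R})$ (already known from the first step) to force the $e_{xy}$-coefficient of $\kappa(\alpha)$ to be zero. You should insert this extra step for the equivalent-pair case; the rest of your argument then goes through.
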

\begin{proof}
Suppose $\psi_{\bf F}$ and $\phi_{\bf F}$ are two quasi-additive induced Lie  $n$-derivations associated to
${\bf F}=\{\, \mathbf{f}_i\, \mid \, i\in \mathcal{I}\, \}$ and $\Psi=\psi_{\bf F}-\phi_{\bf F}$. We need to prove that $\Psi$ is 
a central-valued map annihilating all $(n-1)$-th commutators. In the following, we define 
\begin{equation}\label{eq1.8}
\mathbbm{n}=\left\{
\begin{aligned} n, \quad & \text{if} \ n\geq 3; \\
3,\quad & \text{if}\ n=2. \end{aligned}\right.
\end{equation}
Note that if $n=2$, then $\Psi$ is a nonlinear Lie derivation, and hence also a Lie $3$-derivation. So $\Psi$ is a nonlinear Lie $\mathbbm{n}$-derivation.
The proof will be carried out by several steps.

Firstly, we will show that 
\begin{equation}\label{eq1.9}
\Psi(e_{xx})\in \mathcal{C}(X, \mathcal{R}) \, \, \, \text{for all}\, \, \,  x\in X.
\end{equation}
Since $\psi_{\bf F}$ and $\phi_{\bf F}$ leave $\mathcal{D}(X, \mathcal{R})$ invariant, $\Psi$ also leaves 
$\mathcal{D}(X, \mathcal{R})$ invariant. So we can assume that $$\Psi(e_{xx})=\sum_{z\in X}d_ze_{zz}.$$ 
 In order to prove $\Psi(e_{xx})\in \mathcal{C}(X, \mathcal{R})$, we only need to show that $d_r=d_s$ for all $r<s$. 
By \eqref{eq1.8}, for each pair $x<y$ we have 
\begin{equation}\label{eq1.10}
\Psi(ce_{xy})=\psi_{\bf F}(ce_{xy})-\phi_{\bf F}(ce_{xy})=0.
\end{equation}
Let $r<s$ in $X$. If $r,s\neq x$, then $[e_{rs},e_{xx}]=0$ and $p_{\mathbbm{n}}(e_{rs}, e_{xx},e_{ss},\cdots,e_{ss})=0$. We therefore obtain
\begin{equation}
\begin{split}
0=&\Psi(p_{\mathbbm{n}}(e_{rs}, e_{xx},e_{ss},\cdots,e_{ss}))\\
=&p_{\mathbbm{n}}(e_{rs}, \Psi(e_{xx}),e_{ss},\cdots,e_{ss})\\
=&(d_s-d_r)e_{rs}.
\end{split}
\end{equation}
This shows that $d_r=d_s$ for all $r<s$ and $r,s\neq x$. If $s=x$, applying $\Psi$ to $p_n(e_{rx}, e_{xx},\cdots,e_{xx})=e_{rx}$ 
gives
\begin{equation}
\begin{split}
0=&\Psi(p_n(e_{rx}, e_{xx},\cdots,e_{xx}))\\
=&p_n(e_{rx}, \Psi(e_{xx}),\cdots,e_{xx})+\cdots+p_n(e_{rx}, e_{xx},\cdots,\Psi(e_{xx}))\\
=&(n-1)(d_x-d_r)e_{rx}.
\end{split}
\end{equation}
So we get $d_r=d_x=d_s$, which is due to the fact that $\mathcal{R}$ is $2$ torsionfree and $(n-1)$-torsionfree. 
Similarly, one can show that $d_r=d_s$ if $r=x$. Thus far, we prove that $\Psi(e_{xx})\in \mathcal{C}(X, \mathcal{R})$ for all $x\in X.$

Secondly, let us show that
\begin{equation}
\Psi(\mathcal{D}(X, \mathcal{\mathcal{R}}))\subset \mathcal{C}(X, \mathcal{R}).\label{eq1.13}
\end{equation}
 Let $\mathfrak{r}=\sum\limits_{x\in X}r_x e_{xx}\in \mathcal{D}(X, \mathcal{R})$, where $r_x\in \mathcal{R}$ for all $x\in X$. 
Assume that $\Psi(\mathfrak{r})=\sum\limits_{x\in X}c_x e_{xx}\in \mathcal{D}(X, \mathcal{R})$, where $c_x\in \mathcal{R}$ for all $x\in X$.
 Applying $\Psi$ to $p_{\mathbbm{n}}(\sum_{x\in X}r_x e_{xx}, e_{xy},e_{yy},\cdots, e_{yy})=(r_x-r_y)e_{xy}$ 
 and considering \eqref{eq1.9}-\eqref{eq1.10}, we see that
$$
\begin{aligned}
0&=\Psi((r_x-r_y)e_{xy})\\
&= \Psi(p_{\mathbbm{n}}(\sum_{x\in X}r_x e_{xx}, e_{xy},e_{yy},\cdots, e_{yy}))\\
&=p_{\mathbbm{n}}(\Psi(\sum_{x\in X}r_x e_{xx}), e_{xy},e_{yy},\cdots,e_{yy})+p_{\mathbbm{n}}(\sum_{x\in X}r_x e_{xx}, \Psi(e_{xy}),e_{yy},\cdots,e_{yy})\\
&=p_{\mathbbm{n}}(\Psi(\sum_{x\in X}r_x e_{xx}), e_{xy},e_{yy},\cdots,e_{yy})\\
&=(c_x-c_y)e_{xy}.
\end{aligned}
$$
So $c_x=c_y$ for all $x<y$, and hence the relation \eqref{eq1.13} holds true.

Thirdly, we will prove that
\begin{equation}\label{eq1.14}
\Psi(FI(X, \mathcal{R}))\subset \mathcal{D}(X, \mathcal{R}).
\end{equation}
Let $\mathfrak{h}=\sum\limits_{x\leq y}h_{xy}e_{xy}$, where $h_{xy}\in R$ for all $x\leq y$. Assume that 
$\Psi(\mathfrak{h})=\sum\limits_{x\leq y}c_{xy}e_{xy}$. 
In order to get $\Psi(\mathfrak{h})\in  \mathcal{D}(X, \mathcal{R})$, we need to show that $c_{xy}=0$ for all $x<y$.  
If $x<y$ and $x$ is not equivalent to $y$, then we have 
\begin{equation}\label{eq1.15}
p_{\mathbbm{n}}(e_{xx}, \mathfrak{h}, e_{yy}, \cdots, e_{yy})=h_{xy}e_{xy}.
\end{equation}
Applying $\Psi$ to \eqref{eq1.15} yields that
$$
\begin{aligned}
0&=\Psi(h_{xy}e_{xy})\\
&=\Psi(p_{\mathbbm{n}}(e_{xx}, \mathfrak{h}, e_{yy}, \cdots, e_{yy}))\\
&=p_{\mathbbm{n}}(e_{xx}, \Psi(\mathfrak{h}),e_{yy}, \cdots, e_{yy})\\
&=c_{xy}e_{xy}.
\end{aligned}
$$
Thus we get $c_{xy}=0$. If $x\simeq y$,
then $p_{\mathbbm{n}}(e_{xx}, \mathfrak{h}, e_{yy},\cdots,e_{yy})=h_{xy}e_{xy}+h_{yx}e_{yx}$. Applying $\Psi$ to it we arrive at
\begin{equation}\label{eq1.16}
\begin{split}
\Psi(h_{xy}e_{xy}+h_{yx}e_{yx})&=\Psi(p_{\mathbbm{n}}(e_{xx}, \mathfrak{h}, e_{yy},\cdots,e_{yy}))\\
&=p_{\mathbbm{n}}(e_{xx}, \Psi(\mathfrak{h}),e_{yy},\cdots,e_{yy})\\
&=c_{xy}e_{xy}+c_{yx}e_{yx}.
\end{split}
\end{equation}
In view of \eqref{eq1.16}, applying $\Psi$ to $p_{\mathbbm{n}}(e_{yx},e_{xx},\cdots,e_{xx},h_{xy}e_{xy}+h_{yx}e_{yx})=h_{xy}(e_{yy}-e_{xx})$, we obtain
\begin{equation}
\begin{split}
\Psi(h_{xy}(e_{yy}-e_{xx})) &=\Psi(p_{\mathbbm{n}}(e_{yx},e_{xx},\cdots,e_{xx},h_{xy}e_{xy}+h_{yx}e_{yx}))\\
&=p_{\mathbbm{n}}(e_{yx},e_{xx},\cdots,e_{xx},\Psi(h_{xy}e_{xy}+h_{yx}e_{yx}))\\
&=[e_{yx},c_{xy}e_{xy}+c_{yx}e_{yx}]\\
&=c_{xy}(e_{yy}-e_{xx}).
\end{split}
\end{equation}
On the other hand, by \eqref{eq1.13} it follows that $c_{xy}(e_{yy}-e_{xx})=\Psi(h_{xy}(e_{yy}-e_{xx}))\in \mathcal{C}(X, \mathcal{R})$. 
This implies $c_{xy}=0$, and hence the relation \eqref{eq1.14} holds true.

Finally, we will prove that $\Psi(FI(X, \mathcal{R}))\subset \mathcal{C}(X, \mathcal{R})$. Let us choose an arbitrary $\mathfrak{s}\in FI(X, \mathcal{R})$. 
By \eqref{eq1.14} we can assume that $\Psi(\mathfrak{s})=\sum\limits_{x\in X} s_x e_{xx}\in \mathcal{D}(X, \mathcal{R})$, where $s_x\in \mathcal{R}$ for all $x\in X$.  
Then
$$
\begin{aligned}
\Psi(p_{\mathbbm{n}}(\mathfrak{s}, e_{xy},e_{yy},\cdots,e_{yy})) & =p_{\mathbbm{n}}(\Psi(\mathfrak{s}), e_{xy},e_{yy},\cdots, e_{yy})\\
&=(s_x-s_y)e_{xy}.
\end{aligned}
$$ 
On the other hand,  it follows from \eqref{eq1.14} that $\Psi(p_{\mathbbm{n}}(\mathfrak{s}, e_{xy},e_{yy},\cdots,e_{yy}))\in \mathcal{D}(X,\mathcal{R})$.
And hence $s_x-s_y=0$ for all $x<y$. This implies $\Psi(\mathfrak{s})\in \mathcal{C}(X, \mathcal{R})$ for all $\mathfrak{s}\in FI(X,\mathcal{R})$.

Since $\Psi$ is a nonlinear Lie $n$-derivation and $\Psi(FI(X, \mathcal{R}))\subset \mathcal{C}(X, \mathcal{R})$, 
it is clear that $\Psi$ annihilate all the $(n-1)$-th commutators.
We complete the proof of this proposition. 
\end{proof}

\textbf{(V) Additive induced Lie derivation}

For each $i \in \mathcal{I}$, let $V({{\bf{E}}_i}) = \{ \, x\in X\, |\, {e_{xv}}\ \mathrm{or}\ e_{ux}\in {{\bf{E}}_i}, u, v\in X\, \}$.  
Let $\mathcal{I}_x:=\{ i\, |\, x \in V({{\bf{E}}_i})\, \} $. For each $i \in \mathcal{I}_x$, let us set 
\begin{eqnarray*}
&V(x,i) = \{\,  y \in X\, |\, \exists {x_1} \in V({{\bf{E}}_i})\backslash \{ x\} ,{x_2}, \cdots ,{x_m} \in X\backslash \{ x\} \ {\rm{such\ that}}&\\
&x \sim {x_1},{x_1} \sim {x_2}, \cdots ,{x_{m - 1}} \sim {x_m},{x_m} \sim y\, \}.&
\end{eqnarray*}

As a more visual description, $y \in V(x, i)\setminus \{x\}$ means that $x$ is connected to $y$ by an acyclic graph
$\{ x \sim {x_1}\sim {x_2}\sim \cdots \sim {x_m} \sim y\}$ which must pass through a point ${x_1} \in V({{\bf{E}}_i})\setminus \{x\}$.

\begin{lemma}{\rm \cite[Lemma 2.3]{Yang20212}}
{\rm (1)} $V({{\bf{E}}_i}) \subset V(x,i)$ for all $x \in V({{\bf{E}}_i})$;

{\rm (2)} $V(x,i) \cap V(x,j) = \{ x\} \ {\rm{if}}\ i \ne j \in {\mathcal{I}_x}$.
\end{lemma}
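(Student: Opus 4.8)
The plan is to view $(X,\sim)$ as an undirected graph and to exploit that the classes ${\bf E}_i$ behave like its blocks (bridges and maximal $2$-connected pieces), with $V({\bf E}_i)$ playing the role of their vertex sets. The only structural input I need is what I will call the \emph{cycle fact}: if a cycle $C$ contains an edge $e\in{\bf E}$, then $e$ is $\approx$-related to every edge of $C$ (a cycle containing two edges is exactly a witness for $\approx$), so all edges of $C$ lie in the single class ${\bf E}_i$ of $e$ and, since every vertex of $C$ is an endpoint of an edge of $C$, every vertex of $C$ lies in $V({\bf E}_i)$. I will also use two elementary graph moves: any walk between two vertices contains a simple path between them; and if $w_0\sim w_1\sim\cdots\sim w_p$ is a simple path with all $w_t$ different from a vertex $x$ that is $\sim$-related to both $w_0$ and $w_p$, then $\{x,w_0,w_1,\ldots,w_p\}$ is a cycle through $x$ that contains an edge joining $x$ to $w_0$ and an edge joining $x$ to $w_p$.

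For (1), fix $x,y\in V({\bf E}_i)$. If $y=x$, an ${\bf E}_i$-edge incident to $x$ has a second endpoint $x_1\in V({\bf E}_i)\setminus\{x\}$ with $x_1\sim x$, which witnesses $x\in V(x,i)$. If $y\ne x$, take ${\bf E}_i$-edges $e_x$ at $x$ and $e_y$ at $y$; if $e_x=e_y$ this common edge joins $x$ and $y$, so $x\sim y$ and $x_1:=y$ works; otherwise $e_x\approx e_y$ with $e_x\ne e_y$ gives a cycle $C$ containing both, and by the cycle fact all of $C$ lies in $V({\bf E}_i)$. Reading $C$ cyclically from $x$, the arc from $x$ to $y$ is a simple path $x\sim z_1\sim\cdots\sim z_k=y$ with every $z_t\ne x$ and $z_1\in V({\bf E}_i)$, which is precisely a witness for $y\in V(x,i)$.

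For (2), let $G_x$ be $(X,\sim)$ with the vertex $x$ deleted. The engine is: \emph{if $u\ne u'$ are both $\sim$-related to $x$ and lie in the same connected component of $G_x$, then some cycle through $x$ contains an edge joining $x$ to $u$ and an edge joining $x$ to $u'$, so these two edges are $\approx$-related} — proved by taking a simple path from $u$ to $u'$ in $G_x$ and closing it up through $x$. Now assume for contradiction that $y\ne x$ lies in $V(x,i)\cap V(x,j)$ with $i\ne j$. Unfolding $y\in V(x,i)$ gives $a_1\in V({\bf E}_i)\setminus\{x\}$ with $a_1\sim x$ together with a walk from $a_1$ to $y$ inside $G_x$, so $a_1$ lies in the component $K$ of $G_x$ that contains $y$; symmetrically $y\in V(x,j)$ gives such a $b_1\in V({\bf E}_j)\setminus\{x\}$ inside $K$. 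The next step upgrades $a_1\in V({\bf E}_i)$ to an honest ${\bf E}_i$-edge at $x$: pick an ${\bf E}_i$-edge $e$ at $a_1$; if its other endpoint is $x$ we are done with $u_i:=a_1$; otherwise, using $x\in V({\bf E}_i)$ (valid because $i\in\mathcal{I}_x$), an ${\bf E}_i$-edge $e'$ at $x$ satisfies $e\ne e'$ and $e\approx e'$, hence $e,e'$ lie on a common cycle $C'$; by the cycle fact every edge of $C'$ is in ${\bf E}_i$, and $C'$ with $x$ removed is a simple path in $G_x$ through $a_1$, so one of the two vertices adjacent to $x$ on $C'$ lies in $K$ and serves as $u_i$, with its edge to $x$ lying in ${\bf E}_i$. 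Symmetrically obtain $u_j\in K$ with $u_j\sim x$ and an edge to $x$ in ${\bf E}_j$. If $u_i\ne u_j$, the engine gives a cycle through $x$ containing both the ${\bf E}_i$-edge and the ${\bf E}_j$-edge, forcing ${\bf E}_i={\bf E}_j$; if $u_i=u_j$, the two edges either coincide (and so lie in ${\bf E}_i\cap{\bf E}_j$) or are $e_{xu_i}$ and $e_{u_ix}$, which are $\approx$-related via the $2$-cycle $\{x,u_i\}$ — either way ${\bf E}_i={\bf E}_j$, contradicting $i\ne j$. Together with $x\in V(x,i)\cap V(x,j)$, which follows from (1), this yields $V(x,i)\cap V(x,j)=\{x\}$.

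The step I expect to be the crux is upgrading ``$a_1\in V({\bf E}_i)$'' to ``an edge of ${\bf E}_i$ joining $x$ to a vertex of $K$'': membership in $V({\bf E}_i)$ only produces an ${\bf E}_i$-edge somewhere at $a_1$, so one has to travel along a realizing cycle back to $x$ while checking that it stays inside the component $K$. The residual fuss — reflexive relations $y\sim y$ when a witness path degenerates to length one, and the two-edge $2$-cycles that appear when $x\simeq u$ — is routine but has to be tracked to keep every case watertight.
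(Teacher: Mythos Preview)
The paper does not supply its own proof of this lemma; it is quoted from \cite[Lemma 2.3]{Yang20212} and stated without argument, so there is nothing in the present text to compare your attempt against.

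That said, your proof is correct and complete. The graph-theoretic reading of $(X,\sim)$, with the classes ${\bf E}_i$ behaving like blocks and $V(x,i)$ read through the vertex-deleted graph $G_x$, is the right framework. Your ``cycle fact'' (any cycle meeting ${\bf E}_i$ has all its edges in ${\bf E}_i$ and all its vertices in $V({\bf E}_i)$) follows immediately from the definition of $\approx$, and your ``engine'' (two distinct neighbours of $x$ in the same $G_x$-component yield a cycle through $x$ containing both incident edges) is verified by closing a simple $G_x$-path through $x$. For part~(1), the case split on whether the chosen ${\bf E}_i$-edges at $x$ and $y$ coincide is clean; in the non-coincident case the definition of $\approx$ directly provides a cycle through both $x$ and $y$, and either arc of that cycle is a valid witness (with $z_1\in V({\bf E}_i)\setminus\{x\}$ since every vertex of the cycle lies in $V({\bf E}_i)$).

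For part~(2), the step you correctly flag as the crux---upgrading from ``$a_1\in V({\bf E}_i)\setminus\{x\}$ with $a_1\sim x$ and $a_1\in K$'' to ``some $u_i\in K$ with an ${\bf E}_i$-edge joining $x$ to $u_i$''---is handled soundly: if the ${\bf E}_i$-edge at $a_1$ already hits $x$, take $u_i=a_1$; otherwise the auxiliary cycle $C'$ through $e$ and an ${\bf E}_i$-edge $e'$ at $x$ (available since $i\in\mathcal I_x$) has all edges in ${\bf E}_i$, and deleting $x$ from $C'$ leaves a path in $G_x$ through $a_1$, so both neighbours of $x$ on $C'$ lie in $K$ and their edges to $x$ lie in ${\bf E}_i$. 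The endgame ($u_i\neq u_j$ via the engine, and $u_i=u_j$ via either equality of the two edges or the $2$-cycle $\{x,u_i\}$ when $x\simeq u_i$) closes the argument without gaps.
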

Let ${\bf F}:=\{{\bf f}_i\mid i\in \mathcal{I}\}$ be a family of additive derivations satisfying the following assumption:
$\mathbf{f}_i=0$ if ${\bf E}_i\subset FI(X_j,\mathcal{R})$ for some $j\in \mathcal{J}$ such that $X_j$ is infinite.
Then we can define a map ${\psi _{\bf F}}:FI(X, \mathcal{R}) \longrightarrow FI(X, \mathcal{R})$ by
\begin{align}\label{eq1.18}
\psi _{\bf F}\left( {\sum\limits_{x \le y} {{r_{xy}}{e_{xy}}} } \right) = \sum\limits_{x \le y} \psi _{\bf F}(r_{xy}e_{xy}),\, \, \, \, r_{xy} \in \mathcal{R},
\end{align}
\begin{align}\label{eq1.19}
\psi _{\bf F}(r{e_{xy}}) = {\mathbf{f}_i}(r){e_{xy}},\quad{e_{xy}} \in {\bf E}_i,\ x < y,\ r \in \mathcal{R},
\end{align}
\begin{align}\label{eq1.20}
\psi_{\bf F}(re_{xx}) =  - \sum\limits_{i \in \mathcal{I}_x} \sum\limits_{y \in V(x,i)\backslash \{ x\} } \mathbf{f}_i (r) e_{yy} ,\quad r \in \mathcal{R}.
\end{align}

Note that ${\mathcal{I}_x} = \emptyset $ if and only if $x$ is an isolated point in $X$. This implies that
there is no element $y \in X\backslash \{ x\} $ such that $x\sim y$. If ${\mathcal{I}_x} = \emptyset $, then the right hand side
of \eqref{eq1.20} can be viewed as 0.  

\begin{lemma}{\rm \cite[Lemma 2.5]{Yang20212}}
$\psi_{\bf F}$ is a nonlinear Lie derivation on $FI(X, \mathcal{R})$.
\end{lemma}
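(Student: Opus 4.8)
The plan is to verify directly the defining identity
\[
\psi_{\bf F}([A,B]) = [\psi_{\bf F}(A),B] + [A,\psi_{\bf F}(B)],\qquad A,B\in FI(X,\mathcal{R}),
\]
after reducing it to ``monomials''. Since every ${\bf f}_i$ is additive, the prescription \eqref{eq1.18}--\eqref{eq1.20} makes $\psi_{\bf F}$ an additive map which, moreover, sends diagonal elements to diagonal elements. Combining additivity of $\psi_{\bf F}$ with bilinearity of the Lie bracket and with the observation that in $FI(X,\mathcal{R})$ every interval $[u,v]$ meets only finitely many $e_{xy}$ (so that all the sums below are locally finite and may be regrouped), it is enough to check the identity for $A=re_{xy}$ and $B=se_{uv}$ with $r,s\in\mathcal{R}$, $x\leqslant y$, $u\leqslant v$. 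There are then three shapes to treat: $x<y$ and $u<v$; $x<y$ and $u=v$; $x=y$ and $u=v$ (the remaining shape $x=y$, $u<v$ is obtained from the second by interchanging $A$ and $B$, which preserves the identity). Throughout I would use $e_{xy}e_{uv}=\delta_{yu}e_{xv}$ and $[e_{xy},e_{ww}]=(\delta_{yw}-\delta_{wx})e_{xy}$.

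First I would dispose of the cases where $[A,B]=0$. If both $A$ and $B$ are diagonal this is immediate, since the images are diagonal too. If $A=re_{xy}$, $B=se_{uv}$ are off-diagonal with $y\neq u$ and $v\neq x$, then $[\psi_{\bf F}(A),B]$ and $[A,\psi_{\bf F}(B)]$ vanish for the same reason. The only mildly delicate vanishing case is $A=re_{xy}$ off-diagonal and $B=se_{uu}$ diagonal with $u\notin\{x,y\}$: here $[\psi_{\bf F}(A),B]=0$, and in $[A,\psi_{\bf F}(B)]=-r\sum_{j\in\mathcal{I}_u}\sum_{w\in V(u,j)\backslash\{u\}}{\bf f}_j(s)(\delta_{yw}-\delta_{wx})e_{xy}$ the contributions at $w=x$ and $w=y$ cancel, because $x\sim y$ and $x,y\neq u$ force $x\in V(u,j)\backslash\{u\}$ if and only if $y\in V(u,j)\backslash\{u\}$ for every $j\in\mathcal{I}_u$ (prolong the connecting acyclic path by the edge $x\sim y$).

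The substance is in the cases where $[A,B]\neq0$. When $A=re_{xy}$ and $B=se_{yv}$ are off-diagonal, so that $x<y<v$ and $[A,B]=rs\,e_{xv}$, the key point is that $\{x,y,v\}$ is a cycle, hence $e_{xy}\approx e_{yv}\approx e_{xv}$ all belong to one class ${\bf E}_i$; the identity then becomes ${\bf f}_i(rs)={\bf f}_i(r)s+r{\bf f}_i(s)$, which holds since ${\bf f}_i$ is a derivation (the companion sub-case $[re_{xy},se_{ux}]=-rs\,e_{uy}$ with $u<x<y$ is identical, via the cycle $\{u,x,y\}$). When $A=re_{xy}$ is off-diagonal and $B=se_{uu}$ is diagonal with $u\in\{x,y\}$, one has $[A,B]=\pm rs\,e_{xy}$, and the right-hand side equals $\pm{\bf f}_i(r)s\,e_{xy}\pm r\big(\sum_{j}{\bf f}_j(s)\big)e_{xy}$, where $e_{xy}\in{\bf E}_i$ and the sum runs over those $j\in\mathcal{I}_u$ with the vertex of $e_{xy}$ distinct from $u$ lying in $V(u,j)\backslash\{u\}$. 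That vertex lies in $V(u,i)\backslash\{u\}$, and by the second assertion of \cite[Lemma 2.3]{Yang20212} the sets $V(u,j)$, $j\in\mathcal{I}_u$, meet pairwise only in $\{u\}$; hence the sum collapses to the single term ${\bf f}_i(s)$, and the identity again reduces to the derivation property of ${\bf f}_i$.

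I expect the main obstacle to be exactly this last bookkeeping in the diagonal/off-diagonal cases: one must recognize that the coefficient $-1$ and the precise index sets $V(x,i)\backslash\{x\}$ appearing in \eqref{eq1.20} are what makes the sums telescope against the commutators $[e_{xy},e_{ww}]$, and \cite[Lemma 2.3]{Yang20212} is what rules out any overcounting. The standing hypothesis that ${\bf f}_i=0$ whenever ${\bf E}_i$ lies in an infinite connected component is needed only to ensure that $\psi_{\bf F}$ is a well-defined self-map of $FI(X,\mathcal{R})$; once we are on a single basis element $re_{xy}$ every sum above is automatically finite, so it plays no role in the computation itself.
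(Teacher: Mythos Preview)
The paper does not prove this lemma; it is simply quoted from \cite{Yang20212}, so there is no in-paper argument to compare against. Your overall strategy---reduce by additivity to monomials $re_{xy}$ and check the Leibniz rule case by case, using Lemma~1.5 to control the index sets $V(u,j)$---is exactly the natural one and your handling of the diagonal/off-diagonal and the ``vanishing'' cases is correct.

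There is, however, a genuine omission in your off-diagonal/off-diagonal discussion. You write ``$A=re_{xy}$ and $B=se_{yv}$ \ldots\ so that $x<y<v$ and $[A,B]=rs\,e_{xv}$'', and then argue via the $3$-cycle $\{x,y,v\}$. Since $X$ is only a \emph{pre}-ordered set, it can happen that $v=x$, i.e.\ $x\simeq y$; then $[re_{xy},se_{yx}]=rs(e_{xx}-e_{yy})$ is \emph{diagonal}, your formula $[A,B]=rs\,e_{xv}$ is false, and the verification does not reduce to ${\bf f}_i(rs)={\bf f}_i(r)s+r{\bf f}_i(s)$ on a single off-diagonal unit. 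This case must be treated separately and uses the formula \eqref{eq1.20} in an essential way. Concretely, when $x\simeq y$ one first observes (via the $2$-cycle $\{x,y\}$ and, for any third neighbour $w$, the $3$-cycle $\{x,y,w\}$) that $\mathcal{I}_x=\mathcal{I}_y=\{i\}$ and that $V(x,i)\setminus\{x\}$ and $V(y,i)\setminus\{y\}$ are both the connected component of $x$ with the respective point removed; feeding this into \eqref{eq1.20} gives
\[
\psi_{\bf F}\bigl(rs(e_{xx}-e_{yy})\bigr)={\bf f}_i(rs)(e_{xx}-e_{yy}),
\]
which matches $[\psi_{\bf F}(A),B]+[A,\psi_{\bf F}(B)]=({\bf f}_i(r)s+r{\bf f}_i(s))(e_{xx}-e_{yy})$. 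Without this step your case split is incomplete.
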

The nonlinear Lie derivation $\psi_{\bf F}$ defined by \eqref{eq1.18}-\eqref{eq1.20} is called an {\bf additive induced Lie derivation} associated to ${\bf F}$. 
It is clear that $\psi_{\bf F}$ is additive, but not $\mathcal{R}$-linear in general.

\vspace{2mm}

\section{Inner-like Derivations of Finitary Incidence Algebras}
\label{xxsec2}

In this section, we will introduce a class of new derivations of finitary incidence algebras. 
Let us first see a proposition, which is a natural generalization of \cite[Theorem 1]{KN}.
\begin{proposition}
For each pre-ordered set  $X$, the incidence space $I(X,\mathcal{R})$ is a bimodule over $FI(X,\mathcal{R})$ with left and right module structures given by the convolution.
\end{proposition}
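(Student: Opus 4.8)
The plan is to check the bimodule axioms directly from the convolution formula; since $I(X,\mathcal{R})$ contains functions that are not finitary, the one thing that needs genuine care is the well-definedness of the products $\alpha\beta$ and $\beta\alpha$ for $\alpha\in FI(X,\mathcal{R})$ and $\beta\in I(X,\mathcal{R})$, that is, that the sums involved are finite. No torsion hypotheses on $\mathcal{R}$ are used.

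First I would isolate the following \emph{finiteness lemma}: if $\alpha\in FI(X,\mathcal{R})$ and $x\le y$, then $\{\,z\in X : x\le z\le y,\ \alpha(x,z)\ne 0\,\}$ and $\{\,z\in X : x\le z\le y,\ \alpha(z,y)\ne 0\,\}$ are finite. Indeed, $z=x$ accounts for at most one element of the first set; any other $z$ satisfies $x<z\le y$, and by finitariness of $\alpha$ there are only finitely many pairs $x\le a<b\le y$ with $\alpha(a,b)\ne 0$, hence only finitely many such $z$. (When $x\simeq y$ with $x\ne y$ the index set $\{z:x\le z\le y\}$ is the whole $\simeq$-class of $x$ and a priori infinite, but the same argument applies: if some $z_0\ne x$ in it has $\alpha(x,z_0)\ne 0$, then every other admissible $z$ satisfies $x<z\le z_0$, so finitariness of $\alpha$ on the interval $[x,z_0]$ bounds the set.) Consequently, for $\alpha\in FI(X,\mathcal{R})$ and $\beta\in I(X,\mathcal{R})$ the sum $(\alpha\beta)(x,y)=\sum_{x\le z\le y}\alpha(x,z)\beta(z,y)$ has finitely many nonzero terms, so $\alpha\beta$ is a well-defined function $X\times X\to\mathcal{R}$; moreover $(\alpha\beta)(x,y)=0$ whenever $x\nleqslant y$ since the index set is then empty, so in fact $\alpha\beta\in I(X,\mathcal{R})$. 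The same holds for $\beta\alpha$. This defines the two actions $FI(X,\mathcal{R})\times I(X,\mathcal{R})\to I(X,\mathcal{R})$ and $I(X,\mathcal{R})\times FI(X,\mathcal{R})\to I(X,\mathcal{R})$.

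Next I would verify the module axioms. Biadditivity of each action in each variable is immediate, since distributing over a finite sum is harmless. For the identity $1=\sum_{x\in X}e_{xx}$ of $FI(X,\mathcal{R})$ one computes $(1\beta)(x,y)=\sum_{x\le z\le y}1(x,z)\beta(z,y)=\beta(x,y)$, because $1(x,z)$ is $1$ when $z=x$ and $0$ otherwise, and likewise $\beta 1=\beta$. Finally the three associativity identities $(\alpha\alpha')\beta=\alpha(\alpha'\beta)$, $(\alpha\beta)\alpha'=\alpha(\beta\alpha')$ and $(\beta\alpha)\alpha'=\beta(\alpha\alpha')$ for $\alpha,\alpha'\in FI(X,\mathcal{R})$ and $\beta\in I(X,\mathcal{R})$ all follow by expanding both sides and recognizing the common value $\sum_{x\le w\le z\le y}\alpha(x,w)\alpha'(w,z)\beta(z,y)$ (with the obvious variants for the other two identities). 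The interchange of the two summations is legitimate because, with $x\le y$ fixed, the finiteness lemma applied first to $\alpha$ on $[x,y]$ and then to $\alpha'$ on each $[w,y]$ shows that only finitely many pairs $(w,z)$ with $x\le w\le z\le y$ give a nonzero summand; this is the same bookkeeping that underlies the associativity of the multiplication of $FI(X,\mathcal{R})$ itself.

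The heart of the proof — and the only place that is not pure formality — is the finiteness lemma together with the attention it requires in a pre-ordered (as opposed to partially ordered) setting, where one must make sure the $\simeq$-classes do not smuggle in infinitely many nonzero terms; once that is settled, everything else is routine rearrangement of finite sums, parallel to \cite[Theorem 1]{KN}.
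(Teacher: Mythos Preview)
Your proposal is correct and takes the same approach as the paper—both reduce to showing the convolution sums have only finitely many nonzero terms via the finitariness of $\alpha$—though you are considerably more thorough, explicitly verifying the module axioms and attending to the pre-order subtleties that the paper's brief proof leaves implicit. One small remark: the case that genuinely needs your parenthetical argument is $x=y$ with a nontrivial $\simeq$-class (where finitariness cannot be invoked directly since its hypothesis requires $u<v$), rather than $x\simeq y$ with $x\ne y$, which already satisfies $x<y$ and so falls under your main argument.
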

\begin{proof}
Let $\alpha\in FI(X,\mathcal{R})$, $\beta\in I(X,\mathcal{R})$. It is sufficient for us to show that  
$\alpha\beta$ and $\beta\alpha$ are well-defined elements in $I(X,\mathcal{R})$.
Since $\alpha\in FI(X,\mathcal{R})$, for each pair $x<y$ there are finite number of $x\leq z\leq y$ such that $\alpha(x,z)\neq 0$. 
It is straightforward to see that 
$$
\begin{aligned}
\alpha\beta&=\sum_{x\leq y}\left(\sum_{x\leq z\leq y}\alpha(x,z)\beta(z,y)\right)e_{xy},\\
\beta\alpha&=\sum_{x\leq y}\left(\sum_{x\leq z\leq y}\beta(x,z)\alpha(z,y)\right)e_{xy}
\end{aligned}
$$
are well-defined elements in $I(X,\mathcal{R})$.
\end{proof}

In what follows, for each element $\alpha\in I(X,\mathcal{R})$ we define 
\begin{equation}\label{eq2.1}
{\rm Ad}_{\alpha}(\beta):=\alpha\beta-\beta\alpha, \ \beta \in FI(X,\mathcal{R}).
\end{equation} 

\begin{proposition}\label{p2.2}
Let $\alpha\in I(X,\mathcal{R})$ such that ${\rm Ad}_{\alpha}(\beta)\in FI(X,\mathcal{R})$
for all $\beta\in FI(X,\mathcal{R})$. Then ${\rm Ad}_{\alpha}$ is a derivation of $FI(X,\mathcal{R})$.
\end{proposition}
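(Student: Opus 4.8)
The plan is to verify directly that ${\rm Ad}_\alpha$ satisfies the Leibniz rule on $FI(X,\mathcal{R})$, using only the hypothesis that ${\rm Ad}_\alpha$ maps $FI(X,\mathcal{R})$ into itself together with the bimodule structure of $I(X,\mathcal{R})$ over $FI(X,\mathcal{R})$ established in the previous proposition. First I would note that for $\alpha \in I(X,\mathcal{R})$ and $\beta,\gamma \in FI(X,\mathcal{R})$, all of the products $\alpha\beta$, $\beta\alpha$, $\alpha\gamma$, $\gamma\alpha$, $\alpha(\beta\gamma)$, $(\beta\gamma)\alpha$ are well-defined elements of $I(X,\mathcal{R})$, and associativity of the convolution (which holds in $I(X,\mathcal{R})$ as a bimodule, since $\beta\gamma \in FI(X,\mathcal{R})$) lets me manipulate them freely. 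Then the computation is the usual one:
$$
{\rm Ad}_\alpha(\beta\gamma) = \alpha\beta\gamma - \beta\gamma\alpha = (\alpha\beta - \beta\alpha)\gamma + \beta(\alpha\gamma - \gamma\alpha) = {\rm Ad}_\alpha(\beta)\gamma + \beta\,{\rm Ad}_\alpha(\gamma),
$$
where the middle equality is just adding and subtracting $\beta\alpha\gamma$.

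The one point that needs care — and where the hypothesis on $\alpha$ is actually used — is that this chain of equalities a priori takes place in $I(X,\mathcal{R})$, not in $FI(X,\mathcal{R})$. So I would first invoke the assumption ${\rm Ad}_\alpha(\beta), {\rm Ad}_\alpha(\gamma), {\rm Ad}_\alpha(\beta\gamma) \in FI(X,\mathcal{R})$ to guarantee that ${\rm Ad}_\alpha$ is genuinely a map $FI(X,\mathcal{R}) \to FI(X,\mathcal{R})$, and then observe that ${\rm Ad}_\alpha(\beta)\gamma$ and $\beta\,{\rm Ad}_\alpha(\gamma)$ are products of two elements of $FI(X,\mathcal{R})$, hence lie in $FI(X,\mathcal{R})$; thus the Leibniz identity, derived in $I(X,\mathcal{R})$, is an identity among elements of $FI(X,\mathcal{R})$. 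It is also immediate that ${\rm Ad}_\alpha$ is $\mathcal{R}$-linear, since left and right convolution by a fixed element are $\mathcal{R}$-linear and addition is preserved.

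I do not expect any real obstacle here; the statement is essentially a bookkeeping lemma. The only thing to be vigilant about is associativity in the mixed setting $I(X,\mathcal{R}) \times FI(X,\mathcal{R}) \times FI(X,\mathcal{R})$ — i.e. that $(\alpha\beta)\gamma = \alpha(\beta\gamma)$ when $\alpha \in I(X,\mathcal{R})$ and $\beta,\gamma \in FI(X,\mathcal{R})$ — which is exactly the content of $I(X,\mathcal{R})$ being an $FI(X,\mathcal{R})$-bimodule (the convolution sums involved are finite because $\beta,\gamma$ are finitary, so there is no convergence issue). Granting that, the proof is two lines of symbol-pushing plus the remark that the output lands in $FI(X,\mathcal{R})$ by hypothesis.
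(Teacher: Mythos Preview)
Your proof is correct and follows essentially the same approach as the paper: both verify the Leibniz rule by the standard add-and-subtract-$\beta\alpha\gamma$ computation and note $\mathcal{R}$-linearity. You are in fact more careful than the paper about where the hypothesis and the bimodule associativity are used, but the underlying argument is identical.
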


\begin{proof}
Let $\beta,\gamma \in FI(X, \mathcal{R})$. Then we have
$$ 
\begin{aligned}
{\rm Ad}_{\alpha}(\beta\gamma)&=\alpha\beta\gamma-\beta\gamma\alpha\\
&=\alpha\beta\gamma-\beta\alpha\gamma+\beta\alpha\gamma-\beta\gamma\alpha\\
&={\rm Ad}_{\alpha}(\beta)\gamma+\beta{\rm Ad}_{\alpha}(\gamma).
\end{aligned}
$$
Thus ${\rm Ad}_{\alpha}$ is a derivation of $FI(X,\mathcal{R})$, which is due to the fact 
that ${\rm Ad}_{\alpha}$ is $\mathcal{R}$-linear. 
\end{proof}

\begin{definition}\label{d2.3}
Let $\alpha\in I(X,\mathcal{R})$ be an element satisfying the condition of Proposition \ref{p2.2}. 
Then ${\rm Ad}_{\alpha}$ is called an \textit{inner-like derivation} of $FI(X,\mathcal{R})$.
\end{definition}

It is clear that ${\rm Ad}_{\alpha}={\rm ad}_{\alpha}$ whenever $\alpha\in FI(X,\mathcal{R})$.
Next we will prove that there is an inner-like derivation of $FI(X,\mathcal{R})$ associated with 
each nonlinear Lie $n$-derivation of $FI(X, \mathcal{R})$. 

\begin{definition}
For a map $\Psi: FI(X,\mathcal{R})\longrightarrow FI(X,\mathcal{R})$ and a pair $x\leq y\in X$, let 
$\Psi_{x,y}: FI(X,\mathcal{R}) \longrightarrow \mathcal{R}$ be the function given by
\begin{equation}
\Psi_{xy}(\mathfrak{t})=\Psi(\mathfrak{t})(x,y)\ \mathrm{for\ all}\ \mathfrak{t}\in FI(X, \mathcal{R}).
\end{equation}
\end{definition}
\begin{remark}
For every map $\Psi:FI(X,\mathcal{R})\longrightarrow FI(X,\mathcal{R})$, we define $\Psi_{xy}=0$ 
if $x\nleq y$. Similarly, one can define $e_{xy}=0$ whenever $x\nleq y$. 
\end{remark}
In what follows, $L$ is a fixed nonlinear Lie $n$-derivation of $FI(X,\mathcal{R})$. Let  $e_{L}\in FI(X,\mathcal{R})$ be an element associated to $L$ given by 
\begin{equation}\label{eq2.3}
e_{L}=\sum_{x<y}L_{xy}(e_{yy})e_{xy}.
\end{equation}
Let us next show that ${\rm Ad}_{e_L}$ is an inner-like derivation of $FI(X,\mathcal{R})$.

\begin{lemma}
For all $x<y$ in $X$, we have 
\begin{equation}\label{eq2.4}
L_{xy}(e_{xx})=-L_{xy}(e_{yy}).
\end{equation}
\end{lemma}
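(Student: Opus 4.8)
The plan is to plug a well-chosen identically vanishing $(n-1)$-th commutator, built only from the idempotents $e_{xx}$ and $e_{yy}$, into the defining identity $(\spadesuit)$ of the nonlinear Lie $n$-derivation $L$, and then to compare the coefficients of the two sides at the single position $(x,y)$.

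First, since $x<y$ we have $x\neq y$, so $e_{xx}e_{yy}=e_{yy}e_{xx}=0$; hence $[e_{xx},e_{yy}]=0$ and therefore $p_n(e_{xx},e_{yy},e_{yy},\dots ,e_{yy})=0$, where $e_{yy}$ is repeated $n-1$ times. Applying $L$ to this and using that $L(0)=0$ (itself an immediate consequence of $(\spadesuit)$ applied to $p_n(0,\dots ,0)$), every term of $(\spadesuit)$ in which $L$ lands on a slot $k\ge 3$ contains the innermost bracket $[e_{xx},e_{yy}]=0$, hence is zero. What survives is
\[
0=p_n\bigl(L(e_{xx}),e_{yy},\dots ,e_{yy}\bigr)+p_n\bigl(e_{xx},L(e_{yy}),e_{yy},\dots ,e_{yy}\bigr),
\]
which for $n=2$ is simply $0=[L(e_{xx}),e_{yy}]+[e_{xx},L(e_{yy})]$.

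Next I would read off the $(x,y)$-entry of each of the two surviving terms. The clean way to do this is to note that the $\mathcal R$-linear operator $R(\xi):=[\xi ,e_{yy}]$ acts on the standard basis by $R(e_{uv})=\delta_{vy}e_{uy}-\delta_{uy}e_{yv}$; consequently $R$ is the identity on the span of $\{e_{uy}\mid u<y\}$, is $-1$ times the identity on the span of $\{e_{yv}\mid y<v\}$, and annihilates every other basis element (in particular $e_{yy}$ and the whole diagonal $\mathcal D(X,\mathcal R)$). Since $p_n(L(e_{xx}),e_{yy},\dots ,e_{yy})=R^{\,n-1}\bigl(L(e_{xx})\bigr)$ and, because $x<y$, the position $(x,y)$ belongs to the eigenvalue-$1$ part while the $\{e_{yv}\}$-part contributes nothing there, the $(x,y)$-coefficient of the first term equals $L(e_{xx})(x,y)=L_{xy}(e_{xx})$, irrespective of the parity of $n$. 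For the second term one first checks that $[e_{xx},L(e_{yy})]$ is supported on $\{e_{xv}\mid x<v\}\cup\{e_{ux}\mid u<x\}$ with $(x,y)$-coefficient $L_{xy}(e_{yy})$; applying $R$ the remaining $n-2$ times (when $n\ge 3$) collapses it onto a scalar multiple of $e_{xy}$, which $R$ fixes, so again the $(x,y)$-coefficient of $p_n(e_{xx},L(e_{yy}),e_{yy},\dots ,e_{yy})$ is $L_{xy}(e_{yy})$.

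Comparing the $(x,y)$-entries on the two sides of the displayed identity then gives $0=L_{xy}(e_{xx})+L_{xy}(e_{yy})$, which is precisely \eqref{eq2.4}. The only point requiring any care is the elementary bookkeeping of which basis vectors $e_{uv}$ are killed by repeated bracketing against $e_{yy}$; I do not expect a genuine obstacle, and it is worth noting that the $2$-torsionfree and $(n-1)$-torsionfree hypotheses on $\mathcal R$ are not used in this lemma.
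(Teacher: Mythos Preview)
Your proof is correct and follows essentially the same route as the paper: both apply $L$ to the vanishing commutator $p_n(e_{xx},e_{yy},\dots ,e_{yy})$, observe that only the first two summands of $(\spadesuit)$ survive because the innermost bracket $[e_{xx},e_{yy}]$ is zero, and then read off the $(x,y)$-entry to obtain $L_{xy}(e_{xx})+L_{xy}(e_{yy})=0$. Your eigenspace description of the operator $R=[\,\cdot\,,e_{yy}]$ is just a tidier packaging of the paper's explicit expansion $\sum_{u<y}L_{uy}(e_{xx})e_{uy}-(-1)^n\sum_{v>y}L_{yv}(e_{xx})e_{yv}+L_{xy}(e_{yy})e_{xy}-(-1)^nL_{yx}(e_{yy})e_{yx}$; the content is identical.
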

\begin{proof}
Let $x<y\in X$ be a fixed pair. Applying $L$ to $p_n(e_{xx},e_{yy},\cdots, e_{yy})=0$ gives
\begin{equation}\label{eq2.5}
\begin{split}
0=&L(p_n(e_{xx},e_{yy},\cdots, e_{yy}))\\
=&p_n(L(e_{xx}),e_{yy},\cdots, e_{yy})+p_n(e_{xx},L(e_{yy}),\cdots, e_{yy})\cdots +p_n(e_{xx},e_{yy},\cdots, L(e_{yy}))\\
=&p_n(L(e_{xx}),e_{yy},\cdots,e_{yy})+p_n(e_{xx},L(e_{yy}),\cdots,e_{yy})\\
=&\sum_{u<y}L_{uy}(e_{xx})e_{uy}-(-1)^n\sum_{v>y}L_{yv}(e_{xx})e_{yv}+L_{xy}(e_{yy})e_{xy}-(-1)^nL_{yx}(e_{yy})e_{yx}.
\end{split}
\end{equation}
In view of the coefficient of $e_{xy}$ in \eqref{eq2.5}, we conclude that $L_{xy}(e_{xx})+L_{xy}(e_{yy})=0$. This implies \eqref{eq2.4}.
\end{proof}

For an arbitrary element $\beta\in FI(X,\mathcal{R})$, let us define
$$
\begin{aligned}
\beta|_u^v&=\sum_{u\leq x\leq y\leq v}\beta(x,y)e_{xy}\ \mathrm{for\ all}\ u\leq v\in X;\\
\beta^D&=\sum_{x\in X}\beta(x,x)e_{xx},\\
\beta^T&=\sum_{x<y}\beta(x,y)e_{xy}.
\end{aligned}
$$
It is not difficult to see that $\beta=\beta^D+\beta^T$.

\begin{lemma}\label{l2.7}
For all $\beta\in FI(X,\mathcal{R})$, we have ${\rm Ad}_{e_L}(\beta^T)\in FI(X,\mathcal{R})$.
\end{lemma}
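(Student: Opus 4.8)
The goal is to show that $\mathrm{Ad}_{e_L}(\beta^T)$ lies in $FI(X,\mathcal{R})$ for every $\beta\in FI(X,\mathcal{R})$, where $e_L=\sum_{x<y}L_{xy}(e_{yy})e_{xy}$. Since $\beta^T=\sum_{x<y}\beta(x,y)e_{xy}$ is a finitary function, it suffices to handle a single generator, i.e.\ to prove that $\mathrm{Ad}_{e_L}(re_{pq})=r(e_Le_{pq}-e_{pq}e_L)\in FI(X,\mathcal{R})$ for each $p<q$ and $r\in\mathcal{R}$, and then argue that the (possibly infinite) sum $\sum_{p<q}\beta(p,q)\,\mathrm{Ad}_{e_L}(e_{pq})$ is again finitary. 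The plan is to compute $e_Le_{pq}-e_{pq}e_L$ explicitly: writing $e_L=\sum_{u<v}L_{uv}(e_{vv})e_{uv}$, we get $e_Le_{pq}=\sum_{u<p}L_{up}(e_{pp})e_{uq}$ and $e_{pq}e_L=\sum_{v>q}L_{qv}(e_{vv})e_{pv}$ (both sums using $e_{uv}e_{pq}=\delta_{vp}e_{uq}$). So $\mathrm{Ad}_{e_L}(e_{pq})=\sum_{u<p}L_{up}(e_{pp})e_{uq}-\sum_{v>q}L_{qv}(e_{vv})e_{pv}$, and by the previous lemma $L_{up}(e_{pp})=-L_{up}(e_{uu})$, so this equals $-\sum_{u<p}L_{up}(e_{uu})e_{uq}-\sum_{v>q}L_{qv}(e_{vv})e_{pv}$.

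The core of the argument is to show each of these two sums is finitary. Consider the first: we must show that for any fixed $a<b$ there are only finitely many pairs $(u,q)$ with $a\le u<q\le b$ (hence $u<p\le q$, forcing $p$ in the interval $[a,b]$ too) and $L_{up}(e_{uu})\neq0$. Fix such a $p\in[a,b]$; since $e_L\in FI(X,\mathcal{R})$, its restriction to the interval $[a,p]$ has only finitely many nonzero coefficients, so there are only finitely many $u$ with $a\le u<p$ and $L_{up}(e_{pp})=e_L(u,p)\neq0$; summing over the finitely many admissible $p\in[a,b]$ (each interval $[a,b]$ contains finitely many $z$ only when $X$ is locally finite — in general one must instead use finitariness of $e_L$ more carefully) gives the claim. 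The analogous statement for the second sum $\sum_{v>q}L_{qv}(e_{vv})e_{pv}$ follows from finitariness of $e_L$ on $[q,b]$. Combining, $\mathrm{Ad}_{e_L}(e_{pq})\in FI(X,\mathcal{R})$; and then, since $\beta^T$ is finitary, for each $a<b$ only finitely many $\beta(p,q)$ with $a\le p<q\le b$ are nonzero and each contributes a finitary correction supported inside $[a,b]$, so the total $\mathrm{Ad}_{e_L}(\beta^T)$ is finitary as well.

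The main obstacle is the bookkeeping around finitariness when $X$ is merely pre-ordered (not locally finite): one cannot appeal to "finitely many elements between $a$ and $b$," so the finiteness of the support of $\mathrm{Ad}_{e_L}(\beta^T)|_a^b$ has to be extracted purely from the defining finitariness conditions on $e_L$ and on $\beta$. The clean way to do this is to observe that $\mathrm{Ad}_{e_L}(\beta^T)|_a^b$ depends only on $e_L|_a^b$ and $\beta^T|_a^b$, both of which are \emph{finite} linear combinations of matrix units; a product and difference of two finite combinations of matrix units is again a finite combination of matrix units, hence automatically finitary. So the argument reduces to the identity $\bigl(\mathrm{Ad}_{e_L}(\beta^T)\bigr)|_a^b = \bigl(\mathrm{Ad}_{e_L|_a^b}(\beta^T|_a^b)\bigr)|_a^b$, which holds because any $e_{uv}$ appearing with $a\le u<v\le b$ in the product forces all intermediate indices into $[a,b]$. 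Once that localization identity is in hand, finitariness of $\mathrm{Ad}_{e_L}(\beta^T)$ is immediate, and the lemma follows.
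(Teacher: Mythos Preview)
Your localization identity $\bigl(\mathrm{Ad}_{e_L}(\beta^T)\bigr)|_a^b=\bigl(\mathrm{Ad}_{e_L|_a^b}(\beta^T|_a^b)\bigr)|_a^b$ is correct, but the step that follows is not: you assert that $e_L|_a^b$ is a \emph{finite} linear combination of matrix units, i.e.\ that $e_L$ is finitary. That is exactly what is \emph{not} available here. The element $e_L$ lives only in the incidence space $I(X,\mathcal{R})$, and the whole point of introducing inner-like derivations is to handle $\alpha\in I(X,\mathcal{R})\setminus FI(X,\mathcal{R})$; indeed, if $e_L$ were already in $FI(X,\mathcal{R})$ then $e_L\beta^T$ and $\beta^T e_L$ would automatically lie in $FI(X,\mathcal{R})$ and the lemma would be trivial. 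Your earlier appeal to ``$e_L\in FI(X,\mathcal{R})$'' in the second paragraph has the same problem. (The line in the paper declaring $e_L\in FI(X,\mathcal{R})$ is a slip; compare with Definition~\ref{d2.3} and Proposition~\ref{p2.2}.)

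The fix is already latent in your second paragraph, and it is essentially the paper's argument. You do not need $e_L$ to be finitary; you need $\beta$ and each $L(e_{zz})$ to be finitary, and both of these you have. First use finitariness of $\beta$: on a fixed interval $[a,b]$ there are only finitely many pairs $(p,q)$ with $a\le p<q\le b$ and $\beta(p,q)\neq 0$. This is what bounds the number of admissible $p$'s --- you do not need local finiteness of $X$ for that. Then, for each such fixed $p$, use $L(e_{pp})\in FI(X,\mathcal{R})$ to conclude there are only finitely many $u$ with $a\le u<p$ and $L_{up}(e_{pp})\neq 0$; symmetrically, for each fixed $q$ use $L(e_{qq})\in FI(X,\mathcal{R})$ (together with $L_{qv}(e_{vv})=-L_{qv}(e_{qq})$ from the previous lemma) to bound the $v$'s. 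Summing the finitely many contributions gives that $(e_L\beta^T)|_a^b$ and $(\beta^T e_L)|_a^b$ are finite sums, as required.
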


\begin{proof}
According to the definition, we know that ${\rm Ad}_{e_L}(\beta^T)=e_L\beta^T-\beta^Te_L$. It is enough for us 
to show that $e_L\beta^T, \beta^Te_L\in FI(X,\mathcal{R})$.

Let $\beta$ be an arbitrary element in $FI(X,\mathcal{R})$. Then we have 
\begin{eqnarray}
e_L\beta^T =\sum_{x\leq y}[\sum_{x< z< y}L_{xz}(e_{zz})\beta(z,y)]e_{xy},\label{eq2.6}\\
\beta^T e_L =\sum_{x\leq y}[\sum_{x< z< y}\beta(x,z)L_{zy}(e_{yy})]e_{xy}.\label{eq2.7}
\end{eqnarray}
Let $u\leq v$ be a fixed pair of elements in $X$.  Since $\beta\in FI(X,\mathcal{R})$,
 there exists only finite many pairs $x<y$ such that $u\leq x<y\leq v$ and $\beta(x,y)\neq 0$, which are 
 denoted by $x_1<y_1,x_2<y_2,\cdots, x_k<y_k$. It follows from the relations \eqref{eq2.6}-\eqref{eq2.7} that 
\begin{eqnarray}
(e_L\beta^T)|_u^v  =\sum_{1\leq i\leq k}\sum_{u\leq x< x_i}L_{xx_i}(e_{x_ix_i})\beta(x_i,y_i)e_{xy_i},\label{eq2.8}\\
(\beta^T e_L)|_u^v =\sum_{1\leq i\leq k}\sum_{y_i< y\leq v}\beta(x_i,y_i)L_{y_iy}(e_{yy})e_{x_iy}.\label{eq2.9}
\end{eqnarray}
For each $1\leq i\leq k$, since $L(e_{x_ix_i})\in FI(X,\mathcal{R})$, there are only finite many elements $x_i^1,\cdots, x_i^{l_i}$ such that $u\leq x_i^j<x_i$ and 
$L_{x_i^jx_i}(e_{x_i^jx_i})=L(e_{x_ix_i})(x_i^j,x_i)\neq 0$ for all $1\leq j\leq l_i$. Thus \eqref{eq2.8} becomes 
\begin{equation}\label{eq2.10}
(e_L\beta^T)|_u^v=\sum_{1\leq i\leq k}\sum_{1\leq j\leq l_i}J_{x_i^{j}x_i}(e_{x_ix_i})\beta(x_i,y_i)e_{x_i^{j}y_i}.
\end{equation}
Similarly, for each $1\leq i\leq k$, let $\{y_i^1,\cdots y_i^{m_i}\}$ be the set of elements such that $y_i<y_i^r \leq v$ and  $L(e_{y_iy_i})(y_i,y_i^r)\neq 0$ for $1\leq r\leq m_i$.
By \eqref{eq2.4} and \eqref{eq2.9} we get 
\begin{equation}\label{eq2.11}
\begin{split}
(\beta^T e_L)|_u^v=&\sum_{1\leq i\leq k}\sum_{y_i< y\leq v}\beta(x_i,y_i)L_{y_iy}(e_{yy})e_{x_iy}\\
=&-\sum_{1\leq i\leq k}\sum_{y_i< y\leq v}\beta(x_i,y_i)L_{y_iy}(e_{y_iy_i})e_{x_iy}\\
=&-\sum_{1\leq i\leq k}\sum_{1\leq r\leq m_i}\beta(x_i,y_i)L_{y_iy_i^r}(e_{y_iy_i})e_{x_iy_i^r}.
\end{split}
\end{equation}
According to \eqref{eq2.10}-\eqref{eq2.11}, it is easy to see that $(e_L\beta^T)|_u^v$ and $(\beta^T e_L)|_u^v$ are 
both finite sums in terms of $\{e_{xy}|x\leq y\}$. We therefore have $e_L\beta^T, \beta^T e_L\in FI(X,\mathcal{R})$.
\end{proof}

\begin{lemma}\label{l2.8}
The nonlinear Lie $n$-derivation $L-{\rm Ad}_{e_L}$ leaves $\mathcal{D}(X,\mathcal{R})$ invariant.
\end{lemma}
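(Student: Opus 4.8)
\emph{Overview.} The plan is to treat first the diagonal idempotents $e_{zz}$ and then bootstrap to arbitrary diagonal elements. Write $M:=L-{\rm Ad}_{e_L}$. Since ${\rm Ad}_{e_L}$ is a genuine derivation of $FI(X,\mathcal{R})$ by Proposition~\ref{p2.2}, it is in particular a Lie $n$-derivation, and since each $p_n$ is multilinear in every slot, $M$ is again a nonlinear Lie $n$-derivation; this is the only structural fact about ${\rm Ad}_{e_L}$ I shall use. The goal is to show $M\big(\mathcal{D}(X,\mathcal{R})\big)\subseteq\mathcal{D}(X,\mathcal{R})$.

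\emph{Step~1: $M(e_{zz})\in\mathcal{D}(X,\mathcal{R})$ for each $z\in X$.} From \eqref{eq2.3} one computes $e_Le_{zz}=\sum_{x<z}L_{xz}(e_{zz})e_{xz}$, while $e_{zz}e_L=\sum_{z<y}L_{zy}(e_{yy})e_{zy}$, which by \eqref{eq2.4} equals $-\sum_{z<y}L_{zy}(e_{zz})e_{zy}$; hence
\[
{\rm Ad}_{e_L}(e_{zz})=e_Le_{zz}-e_{zz}e_L=\sum_{x<z}L_{xz}(e_{zz})e_{xz}+\sum_{z<y}L_{zy}(e_{zz})e_{zy},
\]
which is precisely the part of the off-diagonal component $L(e_{zz})^T$ lying in ``row $z$ and column $z$''. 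So Step~1 reduces to the claim that $L_{cd}(e_{zz})=0$ whenever $c<d$ and $z\notin\{c,d\}$. To prove this, apply the Lie $n$-derivation identity to $p_n(e_{zz},e_{dd},\dots,e_{dd})$ (with $n-1$ copies of $e_{dd}$), which vanishes because $[e_{zz},e_{dd}]=0$ as $z\neq d$. Every summand of that identity carrying $L(e_{dd})$ in a slot $\ge 3$ still begins with $[e_{zz},e_{dd}]=0$ and hence is zero, so the identity collapses to
\[
0=p_n\big(L(e_{zz}),e_{dd},\dots,e_{dd}\big)+p_n\big(e_{zz},L(e_{dd}),e_{dd},\dots,e_{dd}\big).
\]
A routine computation shows the second summand is supported on matrix positions meeting $z$, while the coefficient of $e_{cd}$ in the first summand is $\pm L_{cd}(e_{zz})$, since bracketing against $e_{dd}$ only rescales the weight vectors $\{e_{ad}:a<d\}$ and $\{e_{db}:d<b\}$. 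As the position $e_{cd}$ is disjoint from $z$, comparing coefficients forces $L_{cd}(e_{zz})=0$. Consequently $M(e_{zz})=L(e_{zz})-{\rm Ad}_{e_L}(e_{zz})=L(e_{zz})^D\in\mathcal{D}(X,\mathcal{R})$.

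\emph{Step~2: arbitrary diagonal elements.} Set $\mathbbm{n}=n$ when $n\ge 3$ and $\mathbbm{n}=3$ when $n=2$; since a nonlinear Lie derivation is also a nonlinear Lie $3$-derivation, $M$ is in all cases a nonlinear Lie $\mathbbm{n}$-derivation. Fix $\mathfrak{d}=\sum_{z\in X}d_ze_{zz}\in\mathcal{D}(X,\mathcal{R})$ and a pair $x<y$; it suffices to show $M(\mathfrak{d})(x,y)=0$. Apply the Lie $\mathbbm{n}$-derivation identity to $p_{\mathbbm{n}}(e_{xx},\mathfrak{d},e_{yy},\dots,e_{yy})$, which is $0$ because $[e_{xx},\mathfrak{d}]=0$. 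On the right-hand side, every summand with $M$ in a slot $\ge 3$ vanishes (it still begins with $[e_{xx},\mathfrak{d}]=0$); the slot-$1$ summand $p_{\mathbbm{n}}(M(e_{xx}),\mathfrak{d},e_{yy},\dots,e_{yy})$ vanishes because $M(e_{xx})\in\mathcal{D}(X,\mathcal{R})$ by Step~1 and diagonal elements commute; and the slot-$2$ summand $p_{\mathbbm{n}}(e_{xx},M(\mathfrak{d}),e_{yy},\dots,e_{yy})$ pins down the $(x,y)$-entry of $M(\mathfrak{d})$, by the same bracket computation used in the proof of Proposition~\ref{p1.4}. Reading off the coefficient of $e_{xy}$ gives $M(\mathfrak{d})(x,y)=0$ for all $x<y$, i.e. $M(\mathfrak{d})\in\mathcal{D}(X,\mathcal{R})$, which is the assertion.

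\emph{Main obstacle.} The bracket bookkeeping in both steps is routine; the only point that needs care — and the crux of the argument — is the passage in Step~2 from $e_{zz}$ to a possibly infinite diagonal element, where $\mathcal{R}$-linearity of $L$ is not available. The device around it is to feed the Lie $\mathbbm{n}$-derivation identity the identically zero bracket $p_{\mathbbm{n}}(e_{xx},\mathfrak{d},e_{yy},\dots,e_{yy})$ and exploit the diagonality of $M(e_{xx})$ already obtained in Step~1 to kill every summand except the one that isolates the $(x,y)$-entry of $M(\mathfrak{d})$.
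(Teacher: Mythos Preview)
Your proof is correct, but it takes a more circuitous route than the paper's. The paper never separates into two steps or invokes the $\mathbbm{n}$-device: for an arbitrary $D=\sum_z d_ze_{zz}$ and any $x<y$ it applies $L$ (not $M$) directly to $p_n(D,e_{yy},\dots,e_{yy})=0$, then multiplies on the left by $e_{xx}$ and on the right by $e_{yy}$ to obtain the scalar identity $L_{xy}(D)=(d_y-d_x)L_{xy}(e_{yy})$. Since a direct computation gives ${\rm Ad}_{e_L}(D)=\sum_{x<y}(d_y-d_x)L_{xy}(e_{yy})e_{xy}$, the off-diagonal parts of $L(D)$ and ${\rm Ad}_{e_L}(D)$ coincide in one stroke. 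Your Step~1 is in fact this same computation specialized to $D=e_{zz}$ (the vanishing $L_{cd}(e_{zz})=0$ for $z\notin\{c,d\}$ is exactly the case $d_c=d_d=0$), and your Step~2 then repeats a similar bracket analysis to handle the general diagonal. The abstraction of working with $M$ as a black-box Lie $n$-derivation is tidy but buys little here; comparing $L$ and ${\rm Ad}_{e_L}$ entrywise, as the paper does, is shorter and avoids the bootstrap.
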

\begin{proof}
Let $D=\sum\limits_{x\in X}d_xe_{xx}$ be an arbitrary element in $\mathcal{D}(X,\mathcal{R})$, where $d_x\in \mathcal{R}$ for all 
$x\in X$. We need to show that $(L-{\rm Ad}_{e_L})(D)\in \mathcal{D}(X,\mathcal{R})$.  
For a fixed pair $x< y$ in $X$, we have $p_n (D, e_{yy} ,e_{yy}, ... ,e_{yy})=0$. Applying $L$ to this identity we get
$$ 
\begin{aligned}
0&=L(p_n (D, e_{yy} ,e_{yy}, ... ,e_{yy})) \\
&=p_n (L(D), e_{yy} ,e_{yy}, ... ,e_{yy})+ p_n (D, L(e_{yy}) ,e_{yy}, ... ,e_{yy}).
\end{aligned}
$$
Multiplying $e_{xx}$ on the left and $e_{yy}$ on the right of this identity, we see that
$$L_{xy}(D) e_{xy}+ (d_x-d_y)L_{xy}(e_{yy})e_{xy}=0.$$
This shows hat 
\begin{equation}\label{eq2.12}
L_{xy}(D)=(d_y-d_x)L_{xy}(e_{yy})
\end{equation}	for all $x<y$ in $X$.
So we have 
$$
\begin{aligned}
(L-{\rm Ad}_{e_L})(D) &=L(D)-{\rm Ad}_{e_L}(D)\\
&=\sum_{x\leqslant y}L_{xy}(D)e_{xy}-\sum_{x<y}(d_y-d_x)L_{xy}(I_{xy})e_{xy}\\
&=\sum_{x\in X}L_{xx}(D)e_{xx}.
\end{aligned}
$$
Here the third identity follows from \eqref{eq2.12}. Thus we eventually arrive at 
$(L-{\rm Ad}_{e_L})(D)=\sum\limits_{x\in X}L_{xx}(D)e_{xx}\in \mathcal{D}(X, \mathcal{R})$.
\end{proof}

\begin{corollary}\label{c2.9}
For each element $D\in \mathcal{D}(X,\mathcal{R})$, we have ${\rm Ad}_{e_L}(D)\in FI(X,\mathcal{R})$.
\end{corollary}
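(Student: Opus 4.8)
The plan is to obtain this as an immediate consequence of Lemma~\ref{l2.8}. First I would note that for any $D=\sum_{x\in X}d_xe_{xx}\in\mathcal{D}(X,\mathcal{R})$ the expression ${\rm Ad}_{e_L}(D)=e_Le_L^{-1}$... more precisely ${\rm Ad}_{e_L}(D)=e_L D-D e_L$ is already a well-defined element of the incidence space $I(X,\mathcal{R})$, since $I(X,\mathcal{R})$ is a bimodule over $FI(X,\mathcal{R})$ and $e_L\in I(X,\mathcal{R})$, $D\in FI(X,\mathcal{R})$; hence the only thing left to verify is that this element is in fact finitary.

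For that I would invoke the computation carried out inside the proof of Lemma~\ref{l2.8}. There, using \eqref{eq2.12}, one checks the equality, valid in $I(X,\mathcal{R})$,
$$(L-{\rm Ad}_{e_L})(D)=\sum_{x\leqslant y}L_{xy}(D)e_{xy}-\sum_{x<y}(d_y-d_x)L_{xy}(e_{yy})e_{xy}=\sum_{x\in X}L_{xx}(D)e_{xx},$$
so that $(L-{\rm Ad}_{e_L})(D)$ is a diagonal element and therefore lies in $\mathcal{D}(X,\mathcal{R})\subseteq FI(X,\mathcal{R})$.

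The conclusion then follows by rearranging: ${\rm Ad}_{e_L}(D)=L(D)-(L-{\rm Ad}_{e_L})(D)$. Here $L(D)\in FI(X,\mathcal{R})$ because $L$ maps $FI(X,\mathcal{R})$ into itself, while $(L-{\rm Ad}_{e_L})(D)\in\mathcal{D}(X,\mathcal{R})\subseteq FI(X,\mathcal{R})$ by the previous paragraph; since a difference of finitary functions is again finitary, ${\rm Ad}_{e_L}(D)\in FI(X,\mathcal{R})$, as desired. There is essentially no obstacle in this argument — it is pure bookkeeping on top of Lemma~\ref{l2.8} — the only delicate point being that every manipulation must be read in the ambient bimodule $I(X,\mathcal{R})$, where $e_L$ need not be finitary, rather than inside $FI(X,\mathcal{R})$ itself. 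Combined with Lemma~\ref{l2.7} and the decomposition $\beta=\beta^D+\beta^T$, this corollary yields ${\rm Ad}_{e_L}(\beta)\in FI(X,\mathcal{R})$ for every $\beta\in FI(X,\mathcal{R})$, whence ${\rm Ad}_{e_L}$ is an inner-like derivation of $FI(X,\mathcal{R})$ (Definition~\ref{d2.3}) by Proposition~\ref{p2.2}.
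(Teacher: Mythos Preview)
Your proposal is correct and follows essentially the same argument as the paper: use Lemma~\ref{l2.8} to get $(L-{\rm Ad}_{e_L})(D)\in\mathcal{D}(X,\mathcal{R})\subset FI(X,\mathcal{R})$, note that $L(D)\in FI(X,\mathcal{R})$, and subtract. Your added care about reading the manipulations in the ambient bimodule $I(X,\mathcal{R})$ is a welcome clarification (and the stray ``$e_Le_L^{-1}$'' is clearly just a slip you immediately correct), but otherwise there is no substantive difference from the paper's proof.
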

\begin{proof}
For each element $D\in \mathcal{D}(X,\mathcal{R})$, we have $L(D)\in FI(X,\mathcal{R})$. 
By Lemma \ref{l2.8}, it follows that $(L-{\rm Ad}_{e_L})(D)\in \mathcal{D}(X, \mathcal{R})\subset FI(X,\mathcal{R})$.
So ${\rm Ad}_{e_L}(D)\in FI(X,\mathcal{R})$.
\end{proof}

\begin{proposition}
${\rm Ad}_{e_L}$ is an inner-like derivation of $FI(X,\mathcal{R})$.
\end{proposition}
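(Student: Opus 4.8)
The plan is to check the defining property of an inner-like derivation (Definition~\ref{d2.3}) directly. We already know that $e_L\in FI(X,\mathcal{R})\subseteq I(X,\mathcal{R})$ from its introduction in~\eqref{eq2.3}, so by Proposition~\ref{p2.2} and Definition~\ref{d2.3} it suffices to establish the single inclusion
\[
{\rm Ad}_{e_L}(\beta)\in FI(X,\mathcal{R})\qquad\text{for every }\beta\in FI(X,\mathcal{R}).
\]
Once this is in place, Proposition~\ref{p2.2} upgrades ${\rm Ad}_{e_L}$ to a genuine derivation of $FI(X,\mathcal{R})$, and ${\rm Ad}_{e_L}$ then qualifies as inner-like by the very wording of Definition~\ref{d2.3}.

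First I would invoke the canonical decomposition $\beta=\beta^D+\beta^T$ recorded just before Lemma~\ref{l2.7}, together with the evident additivity (indeed $\mathcal{R}$-linearity in $\beta$, since the module actions are $\mathcal{R}$-bilinear) of ${\rm Ad}_{e_L}$, to write ${\rm Ad}_{e_L}(\beta)={\rm Ad}_{e_L}(\beta^D)+{\rm Ad}_{e_L}(\beta^T)$. The strictly-upper summand is handled by Lemma~\ref{l2.7}, which says precisely that ${\rm Ad}_{e_L}(\beta^T)\in FI(X,\mathcal{R})$. For the diagonal summand, note that $\beta^D\in\mathcal{D}(X,\mathcal{R})$, so Corollary~\ref{c2.9} gives ${\rm Ad}_{e_L}(\beta^D)\in FI(X,\mathcal{R})$. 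Since $FI(X,\mathcal{R})$ is closed under addition, the sum of the two pieces lies in $FI(X,\mathcal{R})$, which is the required inclusion, and the proof is complete.

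In effect there is no genuine obstacle remaining at this point: the proposition is a short assembly of Lemma~\ref{l2.7}, Corollary~\ref{c2.9}, and Proposition~\ref{p2.2}. The real work was already done earlier — namely the finitariness estimates in the proof of Lemma~\ref{l2.7}, where one exploits that each $L(e_{x_ix_i})$ and $L(e_{y_iy_i})$ is itself finitary to control the relevant double sums, and the diagonal analysis of Lemma~\ref{l2.8} and Corollary~\ref{c2.9}, which rule out the \emph{a priori} possibility that ${\rm Ad}_{e_L}$ pushes a diagonal element outside $FI(X,\mathcal{R})$. If one wants to name the delicate point, it is exactly this separation of diagonal and strictly-upper behaviour: $e_L$ encodes the potentially subtle action of $L$ on the idempotents $e_{yy}$, and it is only after splitting $\beta=\beta^D+\beta^T$ that each summand is visibly seen to stay finitary.
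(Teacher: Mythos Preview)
Your proof is correct and follows essentially the same route as the paper: reduce via Definition~\ref{d2.3} and Proposition~\ref{p2.2} to the inclusion ${\rm Ad}_{e_L}(\beta)\in FI(X,\mathcal{R})$, split $\beta=\beta^D+\beta^T$, and invoke Lemma~\ref{l2.7} and Corollary~\ref{c2.9} for the two pieces. Your additional commentary on where the genuine work was done is accurate but not part of the paper's terse proof.
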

\begin{proof}
By Definition \ref{d2.3}, we only need to show that ${\rm Ad}_{e_L}(\beta)\in FI(X,\mathcal{R})$ for all $\beta\in FI(X,\mathcal{R})$. Since $\beta=\beta^T+\beta^D$, we have ${\rm Ad}_{e_L}(\beta)={\rm Ad}_{e_L}(\beta^T)+{\rm Ad}_{e_L}(\beta^D)$.
By Lemma \ref{l2.7} and Corollary \ref{c2.9}, we conclude that ${\rm Ad}_{e_L}(\beta^T), {\rm Ad}_{e_L}(\beta^D)\in FI(X,\mathcal{R})$. 
This implies that ${\rm Ad}_{e_L}(\beta)\in FI(X,\mathcal{R})$ for all $\beta\in FI(X,\mathcal{R})$.
\end{proof}

\section{The Structure of Nonlinear Lie-Type Derivations}\label{xxsec3}

In this section, we will study nonlinear Lie-type derivations of the finitary incidence algebra $FI(X,\mathcal{R})$. 
Throughout this section, $n\geq 2$ is a fixed integer. 
Let $X= {\bigsqcup\limits_{i \in \mathcal{J}}}{X_i}$ be the decomposition of $X$ into the union of its connected components, 
and $\{{\bf E}_i\mid i\in \mathcal{I}\}$ the equivalence classes of ${\bf E}$ respect to relation $\approx$.
For each $i\in \mathcal{J}$, let $\pi_i:FI(X,\mathcal{R})\longrightarrow FI(X_i,\mathcal{R})$ be the canonical projective morphism.
The main result of this section is the following theorem.

\begin{theorem}\label{t3.1}
Every nonlinear Lie $n$-derivation on $FI(X, \mathcal{R})$ can be expressed as the sum of 
an inner-like derivation, a transitive induced derivation and a quasi-additive induced Lie $n$-derivation.
\end{theorem}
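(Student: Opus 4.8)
The plan is to peel the three summands off $L$ one at a time, beginning with the inner-like part already isolated in Section \ref{xxsec2}. Put $L_1:=L-{\rm Ad}_{e_L}$, where $e_L$ is the element \eqref{eq2.3}; since the defining polynomial $p_n$ is multilinear and ${\rm Ad}_{e_L}$ is a derivation (hence a Lie $n$-derivation), $L_1$ is again a nonlinear Lie $n$-derivation, and by Lemma \ref{l2.8} it leaves $\mathcal{D}(X,\mathcal{R})$ invariant. Since the last proposition of Section \ref{xxsec2} identifies ${\rm Ad}_{e_L}$ as an inner-like derivation, it suffices to express $L_1$ as the sum of a transitive induced derivation and a quasi-additive induced Lie $n$-derivation; throughout, I would work one connected component $FI(X_i,\mathcal{R})$ at a time, since all the combinatorial data ($\approx$-classes, cycles) live inside single components.

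The core is the analysis of $L_1$ on the edges $e_{xy}$ with $x<y$. Here the identities $[e_{xx},e_{xy}]=e_{xy}=[e_{xy},e_{yy}]$ give $p_n(e_{xx},re_{xy},e_{yy},\cdots,e_{yy})=re_{xy}$ for every $r\in\mathcal{R}$; applying $L_1$ and using that $L_1(e_{xx})$ and $L_1(e_{yy})$ lie in $\mathcal{D}(X,\mathcal{R})$, so are central among diagonal elements, a direct computation with the relations $e_{uv}e_{st}=\delta_{vs}e_{ut}$ forces every coefficient of $L_1(re_{xy})$ to vanish except the one on $e_{xy}$ (and, in the exceptional case $x\simeq y$, the one on $e_{yx}$, which a symmetric argument kills). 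Hence $L_1(re_{xy})=g_{xy}(r)\,e_{xy}\in\mathcal{S}_{xy}$ for a well-defined function $g_{xy}\colon\mathcal{R}\to\mathcal{R}$. I would then feed $L_1$ further $p_n$-identities to establish: (i) $g_{xy}$ is additive and $\mathbf{f}_{xy}:=g_{xy}-g_{xy}(1)\,\mathrm{id}$ is an additive derivation of $\mathcal{R}$, additivity being forced by manipulations with the idempotents $e_{xx},e_{yy}$ and the Leibniz rule by the chain multiplication $e_{xz}e_{zy}=e_{xy}$ (or by the identities relating $e_{xy}$ and $e_{yx}$ inside a $2$-cycle), in the spirit of \cite{ChenZhang,BenkovicEremita,WangWang} and of the companion paper \cite{Yang20212}; (ii) $\mathbf{f}_{xy}=\mathbf{f}_{uv}$ whenever $e_{xy}\approx e_{uv}$, proved by transporting scalars around the cycle witnessing $e_{xy}\approx e_{uv}$ via suitable $p_n$-identities, so that one may write $\mathbf{f}_i$ for the common value on the class $\mathbf{E}_i$; (iii) the scalars $\lambda_{xy}:=g_{xy}(1)$ satisfy $\lambda_{xz}=\lambda_{xy}+\lambda_{yz}$ for all $x\le y\le z$, obtained by equating the $e_{xz}$-coefficient after applying $L_1$ to a $p_n$-identity producing $e_{xz}$ from $e_{xy}$ and $e_{yz}$, so that $f(x,y):=\lambda_{xy}$ for $x<y$ (with $f(x,x)=0$ automatic) defines a transitive map $f\colon{\leqslant}\to\mathcal{R}$.

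With these data in hand, let $L_f$ be the transitive induced derivation attached to $f$ and set $\psi_{\mathbf{F}}:=L_1-L_f$, where $\mathbf{F}=\{\mathbf{f}_i\mid i\in\mathcal{I}\}$. Then $\psi_{\mathbf{F}}$ is a nonlinear Lie $n$-derivation; it leaves $\mathcal{D}(X,\mathcal{R})$ invariant because $L_f$ annihilates $\mathcal{D}(X,\mathcal{R})$ (every transitive map vanishes on the diagonal); and for $e_{xy}\in\mathbf{E}_i$ one computes $\psi_{\mathbf{F}}(re_{xy})=g_{xy}(r)e_{xy}-rf(x,y)e_{xy}=(g_{xy}(r)-\lambda_{xy}r)e_{xy}=\mathbf{f}_i(r)e_{xy}$. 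Thus $\psi_{\mathbf{F}}$ is, by definition, a quasi-additive induced Lie $n$-derivation associated to $\mathbf{F}$, and $L={\rm Ad}_{e_L}+L_f+\psi_{\mathbf{F}}$ is the decomposition sought.

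The whole difficulty is concentrated in the second step: showing that $L_1(re_{xy})$ is confined to $\mathcal{S}_{xy}$, that the induced scalar maps $g_{xy}$ are additive with $g_{xy}-g_{xy}(1)\,\mathrm{id}$ obeying the Leibniz rule, and that these derivations depend only on the $\approx$-class of the edge. Each of these points demands a case analysis according to the order relations among the points involved --- whether $x\simeq y$, whether an intermediate point exists, whether $x$ and $y$ sit on a cycle of length at least $3$ --- because the polynomial identity used to isolate a given edge changes shape in each case; and one must simultaneously keep the possibly infinite sums under control so that all intermediate elements remain inside the \emph{finitary} algebra $FI(X,\mathcal{R})$. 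Much of this bookkeeping can be imported from \cite{Yang20212}, the passage from $n=2$ to general $n$ being handled by padding commutators with idempotents exactly as in the proof of Proposition \ref{p1.4}.
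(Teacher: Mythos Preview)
Your overall architecture matches the paper's proof: subtract ${\rm Ad}_{e_L}$, then a transitive induced derivation built from the values $g_{xy}(1)$, then verify the residue is quasi-additive induced. Your reorganization of the last two steps (keeping a single $g_{xy}$ and splitting it as $\lambda_{xy}\,\mathrm{id}+\mathbf{f}_{xy}$ rather than subtracting $L_f$ first and working with $L^2$) is an equivalent and slightly more streamlined bookkeeping. Two of your technical claims, however, are wrong or incomplete as stated.

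First, the assertion that ``a symmetric argument kills'' the $e_{yx}$-coefficient of $L_1(re_{xy})$ when $x\simeq y$ is not justified: nothing is symmetric here, and this is in fact the single hardest point in the paper. For a two-element full component $X_i=\{x,y\}$ the paper needs a four-step argument (Lemma \ref{l3.4}) that introduces two auxiliary nonlinear Lie $n$-derivations, proves an intermediate map is central on $\mathcal{D}(X_i,\mathcal{R})$, and finally uses the $2$-torsionfreeness hypothesis in an essential way to force the $e_{yx}$-coefficient to vanish. For $|X_i|\ge3$ one instead pushes through a third vertex (Lemma \ref{l3.5}). You should flag this as the delicate case rather than dismiss it.

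Second, your route to the Leibniz rule for $\mathbf{f}_{xy}$ via ``chain multiplication $e_{xz}e_{zy}=e_{xy}$, or the identities relating $e_{xy}$ and $e_{yx}$ inside a $2$-cycle'' does not cover the generic situation where $x<y$ is a covering relation with $y\not<x$ (e.g.\ $X=\{1<2\}$): there is then no intermediate $z$ and no $e_{yx}$. The paper instead uses $p_{\mathbbm{n}}(re_{xx},se_{xy},e_{yy},\ldots,e_{yy})=rse_{xy}$, which works for every edge (Lemma \ref{l3.8}). Relatedly, you never isolate the fact $L_1(e_{xx})\in\mathcal{C}(X,\mathcal{R})$ (Lemma \ref{l3.2}); merely knowing $L_1(e_{xx})\in\mathcal{D}(X,\mathcal{R})$ leaves residual diagonal contributions in almost every $p_n$-computation you propose, and the paper proves and invokes this centrality early and repeatedly.
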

After some necessary preparations, we will prove the theorem at the end of this section.
In what follows, let $L$ be a fixed nonlinear Lie $n$-derivation, and let us define 
\begin{equation}
L^1:=L- {\rm Ad}_{e_L}.
\end{equation}
By Lemma \ref{l2.8}, we know that $L^1$ leaves $\mathcal{D}(X,\mathcal{R})$ invariant. So we can assume that  
\begin{equation}\label{e3.2}
L^1(e_{xx})=\sum_{z\in X}c^{x}_ze_{zz}, 
\end{equation}
where $c^x_z\in \mathcal{R},\ x, z\in X$. 

\begin{lemma}\label{l3.2}
	For each $x\in X$, we have $L^1(e_{xx})\in \mathcal{C}(X,\mathcal{R})$.
\end{lemma}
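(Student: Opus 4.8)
The plan is to run, for $L^1$, the argument from the first step of the proof of Proposition~\ref{p1.4}. First I would note that $L^1=L-{\rm Ad}_{e_L}$ is again a nonlinear Lie $n$-derivation: $L$ is one, ${\rm Ad}_{e_L}$ is a derivation hence a Lie $n$-derivation, and since each $p_n$ is multilinear in its arguments the difference of two (possibly nonlinear) Lie $n$-derivations is a Lie $n$-derivation. If $n=2$ then $L^1$ is in addition a Lie $3$-derivation, so in all cases $L^1$ is a nonlinear Lie $\mathbbm{n}$-derivation, where $\mathbbm{n}=n$ for $n\ge 3$ and $\mathbbm{n}=3$ for $n=2$ (note $\mathcal{R}$ is $(\mathbbm{n}-1)$-torsionfree, being both $2$- and $(n-1)$-torsionfree). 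By Lemma~\ref{l2.8} we may write $L^1(e_{xx})=\sum_{z\in X}c^x_ze_{zz}$ as in \eqref{e3.2}; since $\mathcal{C}(X,\mathcal{R})=\bigoplus_{i}\mathcal{R}I_i$, it suffices to prove $c^x_r=c^x_s$ whenever $r\sim s$, and after relabelling we may assume $r<s$ and split into the cases $x\notin\{r,s\}$, $s=x$, and $r=x$.

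In the generic case $x\notin\{r,s\}$ one has $[e_{rs},e_{xx}]=0$, whence $p_{\mathbbm{n}}(e_{rs},e_{xx},e_{ss},\dots,e_{ss})=0$. Applying $L^1$ and expanding by the Lie $\mathbbm{n}$-derivation identity, I would check that every summand other than the one carrying $L^1(e_{xx})$ (in the second slot) has vanishing $e_{rs}$-entry: the summand with $L^1(e_{rs})$ in the first slot lands, after the successive brackets with $e_{xx}$ and with the $e_{ss}$'s, in the span of $e_{xs}$ and $e_{sx}$, while the summands with $L^1(e_{ss})$ in a later slot vanish outright because the innermost bracket $[e_{rs},e_{xx}]$ is already zero. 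The remaining summand is $p_{\mathbbm{n}}(e_{rs},L^1(e_{xx}),e_{ss},\dots,e_{ss})=(c^x_s-c^x_r)e_{rs}$, and comparing $e_{rs}$-entries gives $c^x_r=c^x_s$.

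For $s=x$ (so $r<x$) the relevant facts are $[e_{rx},e_{xx}]=e_{rx}$ and $[e_{rx},L^1(e_{xx})]=(c^x_x-c^x_r)e_{rx}$. I would apply $L^1$ to $e_{rx}=p_{\mathbbm{n}}(e_{rx},e_{xx},\dots,e_{xx})$ and compare $e_{rx}$-entries: the summand $p_{\mathbbm{n}}(L^1(e_{rx}),e_{xx},\dots,e_{xx})$ has $e_{rx}$-entry equal to $L^1(e_{rx})(r,x)$ — bracketing repeatedly with $e_{xx}$ neither changes the $e_{rx}$-coefficient nor creates a new one — so it cancels against the left-hand side, while each of the other $\mathbbm{n}-1$ summands contributes $c^x_x-c^x_r$. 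Hence $(\mathbbm{n}-1)(c^x_x-c^x_r)=0$ and, by $(\mathbbm{n}-1)$-torsionfreeness, $c^x_r=c^x_x$. The case $r=x$ (so $x<s$) is entirely analogous once one accounts for the sign in $[e_{xs},e_{xx}]=-e_{xs}$: one applies $L^1$ to $p_{\mathbbm{n}}(e_{xs},e_{xx},\dots,e_{xx})=(-1)^{\mathbbm{n}-1}e_{xs}$ when $\mathbbm{n}$ is odd and to $p_{\mathbbm{n}}(e_{xx},e_{xs},e_{xx},\dots,e_{xx})=e_{xs}$ when $\mathbbm{n}$ is even, and the same bookkeeping again yields $(\mathbbm{n}-1)(c^x_x-c^x_s)=0$, i.e. $c^x_s=c^x_x$.

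Putting the three cases together, $c^x_r=c^x_s$ for all $r\sim s$, so $c^x_\bullet$ is constant on each connected component of $X$; that is exactly $L^1(e_{xx})\in\mathcal{C}(X,\mathcal{R})$. I expect the only delicate point to be the coefficient bookkeeping in the last two cases: one must verify that, after expanding by the Lie $\mathbbm{n}$-derivation identity, the $e_{rx}$- (resp.\ $e_{xs}$-)entry picks up contributions only from the slots holding $L^1(e_{xx})$ and from $L^1$ of the off-diagonal matrix unit itself (which cancels the left-hand side), and that the parity sign arising when $x$ is the smaller index is neutralised by the choice of commutator word; the torsion hypotheses on $\mathcal{R}$ enter precisely at the final division by $\mathbbm{n}-1$.
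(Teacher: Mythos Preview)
Your proposal is correct and follows essentially the same strategy as the paper's proof: reduce to showing $c^x_r=c^x_s$ for $r<s$, and treat the three cases $x\notin\{r,s\}$, $x=s$, $x=r$ by applying the Lie $\mathbbm{n}$-derivation identity to a suitable commutator word and reading off the $e_{rs}$- (or $e_{rx}$-, $e_{xs}$-) coefficient. The only cosmetic differences are that the paper places $e_{xx}$ in the first slot rather than the second in Case~1, uses $p_n$ rather than $p_{\mathbbm{n}}$ in Cases~2--3, and handles Case~3 ``similarly'' via $p_n(e_{rs},e_{rr},\dots,e_{rr})=(-1)^{n-1}e_{rs}$ without your odd/even split; your parity split is unnecessary (the sign $(-1)^{n-1}$ factors out harmlessly) but not wrong.
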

\begin{proof}
According to the relation \eqref{e3.2}, it is sufficient for us to prove that $c^x_r=c^x_s$ for all $r<s$ in $X$. In the following of the proof, let 
$\mathbbm{n}$ be the number associated to $n$ defined by \eqref{eq1.8}.
	
If $x\neq r$ and $x\neq s$,  then $[e_{xx}, e_{rs}]=0$ and  
\begin{equation}\label{e3.3}
p_{\mathbbm{n}} (e_{xx}, e_{rs},e_{ss}, ... , e_{ss})=0.
\end{equation}
 Applying $L^1$ to the both sides of \eqref{e3.3} yields
\begin{equation}\label{e3.4}
 p_\mathbbm{n} (L^1(e_{xx}), e_{rs},e_{ss}, ... , e_{ss})+p_\mathbbm{n} (e_{xx}, L^1 (e_{rs}),e_{ss}, ... , e_{ss})=0.
 \end{equation}
In view of the coefficient of $e_{rs}$ in \eqref{e3.4}, we assert that $c^x_r=c^x_s$.
	
If $x=s$, then $p_n ( e_{rs} ,e_{ss}, ..., ,e_{ss} )=e_{rs}$. Applying $L^1$ to this equality we see that
\begin{equation}\label{e3.5}
p_n ( L^1(e_{rs}) ,e_{ss}, ..., ,e_{ss} )+p_n ( e_{rs} ,L^1(e_{ss}), ..., ,e_{ss} )+...+p_n ( e_{rs} ,e_{ss}, ..., ,L^1(e_{ss}) )=L^1(e_{rs})
\end{equation}
 Considering the coefficients of $e_{rs}$ in both sides of \eqref{e3.5}, we get 
 $$L^1_{rs}(e_{rs})+(n-1)(c^x_s-c^x_r)=L^1_{rs}(e_{rs}).$$
Since $\mathcal{R}$ is $(n-1)$-torsionfree, we have  $c^x_s=c^x_r.$
 
In a similar manner, one can prove that $c^x_s=c^x_r$ whenever $x=r$.  We complete the proof of the lemma.
\end{proof}

To proceed our discussion, we need the following technique lemma.
\begin{lemma}\label{l3.3}
Let $\Psi$ be a nonlinear Lie $n$-derivation on $FI(X,\mathcal{R})$ and $\Psi_i:=\pi_i\circ \Psi|_{FI(X_i,\mathcal{R})}$, $i\in \mathcal{J}$. 
Then $\Psi_i$ is a nonlinear Lie $n$-derivation on $FI(X_i,\mathcal{R})$ and $\Psi_{i} ^{\bot}:=\Psi|_{FI(X_i,\mathcal{R})}-\Psi_i$ is a map from $FI(X_i,\mathcal{R})$ to 
$\bigoplus\limits_{j\in \mathcal{J}, j\neq i} \mathcal{C}(X_j,\mathcal{R})$.
\end{lemma}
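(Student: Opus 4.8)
The plan is to exploit the block structure of $FI(X,\mathcal{R})$ coming from the decomposition $X=\bigsqcup_{i\in\mathcal{J}}X_i$ into connected components. Since $e_{xy}\neq 0$ forces $x\leqslant y$, hence $x\sim y$, hence $x$ and $y$ lie in a common component, each $FI(X_i,\mathcal{R})$ is simultaneously a subalgebra and an ideal of $FI(X,\mathcal{R})$; moreover $FI(X_i,\mathcal{R})\,FI(X_j,\mathcal{R})=0$ whenever $i\neq j$, and the canonical projection $\pi_i\colon FI(X,\mathcal{R})\longrightarrow FI(X_i,\mathcal{R})$ is an algebra homomorphism whose restriction to $FI(X_i,\mathcal{R})$ is the identity. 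These three facts are all that is needed, and I would record them at the outset.

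For the first assertion, take $\beta_1,\dots,\beta_n\in FI(X_i,\mathcal{R})$. Since $FI(X_i,\mathcal{R})$ is a subalgebra, $p_n(\beta_1,\dots,\beta_n)\in FI(X_i,\mathcal{R})$. Applying the Lie $n$-derivation identity $(\spadesuit)$ for $\Psi$ on $FI(X,\mathcal{R})$ and then applying $\pi_i$, and using that $\pi_i$ is an algebra homomorphism (so it commutes with every $p_n$) together with $\pi_i|_{FI(X_i,\mathcal{R})}=\mathrm{id}$, one gets at once $\Psi_i(p_n(\beta_1,\dots,\beta_n))=\sum_{k=1}^n p_n(\beta_1,\dots,\Psi_i(\beta_k),\dots,\beta_n)$. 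This is a one-line verification and no torsion hypotheses enter.

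For the second assertion, fix $\alpha\in FI(X_i,\mathcal{R})$ and $j\neq i$, and set $\beta:=\pi_j(\Psi(\alpha))\in FI(X_j,\mathcal{R})$; since $\mathcal{C}(X_j,\mathcal{R})=\mathcal{R}\,I_j$, what must be shown is that $\beta$ is diagonal on $X_j$ with constant diagonal. I would replace $n$ throughout by $\mathbbm{n}$ as in \eqref{eq1.8} (a nonlinear Lie derivation is also a nonlinear Lie $3$-derivation), so $\mathbbm{n}\geqslant 3$. Given $x<y$ in $X_j$, the element $e_{xx}$ lies in $FI(X_j,\mathcal{R})$, so $[e_{xx},\alpha]=0$ and hence $p_{\mathbbm{n}}(e_{xx},\alpha,e_{yy},\dots,e_{yy})=0$. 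Expanding $\Psi$ of this via the Lie $\mathbbm{n}$-derivation identity: every term in which $\Psi$ falls on one of the $e_{yy}$'s still begins with the factor $[e_{xx},\alpha]=0$; the term in which $\Psi$ falls on $e_{xx}$ begins with $[\Psi(e_{xx}),\alpha]=[\pi_i\Psi(e_{xx}),\alpha]\in FI(X_i,\mathcal{R})$, which is then annihilated by $e_{yy}$ and so also vanishes; thus only $p_{\mathbbm{n}}(e_{xx},\Psi(\alpha),e_{yy},\dots,e_{yy})$ survives. Since $e_{xx},e_{yy}\in FI(X_j,\mathcal{R})$, here $\Psi(\alpha)$ may be replaced by $\beta$, and a direct convolution computation gives $p_{\mathbbm{n}}(e_{xx},\beta,e_{yy},\dots,e_{yy})=\beta(x,y)e_{xy}$, whence $\beta(x,y)=0$. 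An entirely parallel argument with $e_{xx}$ replaced by $e_{xy}$ — using that $X_j$ is connected, so it suffices to treat $x\sim y$ — gives $p_{\mathbbm{n}}(e_{xy},\beta,e_{yy},\dots,e_{yy})=(\beta(y,y)-\beta(x,x))e_{xy}=0$, so $\beta$ has constant diagonal on $X_j$, i.e.\ $\beta\in\mathcal{C}(X_j,\mathcal{R})$. Since $j\neq i$ was arbitrary, $\Psi_i^{\bot}(\alpha)=\sum_{j\neq i}\pi_j(\Psi(\alpha))$ lies in $\bigoplus_{j\in\mathcal{J},\,j\neq i}\mathcal{C}(X_j,\mathcal{R})$.

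The first part is essentially formal. The only point requiring care is the bookkeeping in the second part: one must check that, in the expansion of $\Psi(p_{\mathbbm{n}}(\cdot))$, every summand but one vanishes — because either the leading bracket $[e_{xx},\alpha]$ (resp.\ $[e_{xy},\alpha]$) is already zero, or the image $[\Psi(e_{xx}),\alpha]$ (resp.\ $[\Psi(e_{xy}),\alpha]$) lies in the foreign block $FI(X_i,\mathcal{R})$ and is killed by $e_{yy}$ — together with the two short convolution computations of $p_{\mathbbm{n}}(e_{xx},\beta,e_{yy},\dots,e_{yy})$ and $p_{\mathbbm{n}}(e_{xy},\beta,e_{yy},\dots,e_{yy})$. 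Passing from $n$ to $\mathbbm{n}$ is exactly what lets a single argument also cover the Lie-derivation case $n=2$, where $\mathbbm{n}\geqslant 3$ is needed to have an $e_{yy}$ left to bracket against. I do not anticipate a genuine obstacle.
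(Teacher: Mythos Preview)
Your argument for the first assertion is identical to the paper's and is correct: apply the Lie $n$-derivation identity to $\Psi$, then push $\pi_i$ through using that it is an algebra homomorphism fixing $FI(X_i,\mathcal{R})$.

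For the second assertion your approach is correct but genuinely different from the paper's. The paper places the element $a\in FI(X_i,\mathcal{R})$ in the \emph{first} slot of $p_n$ and fills the remaining slots with \emph{arbitrary} $b_1,\dots,b_{n-1}\in FI(X_j,\mathcal{R})$. From $p_n(a,b_1,\dots,b_{n-1})=0$ one obtains, after discarding the terms that vanish for block reasons, $p_n(\Psi(a),b_1,\dots,b_{n-1})=0$ and hence $p_n(\Psi_i^{\bot}(a),b_1,\dots,b_{n-1})=0$ for all such $b_k$; centrality of $\pi_j(\Psi(a))$ then follows at once. This works directly with $n$ and needs no passage to $\mathbbm{n}$, no choice of specific matrix units, and no coefficient computation. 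Your route instead puts $\alpha$ in the \emph{second} slot, uses specific elements $e_{xx},e_{xy},e_{yy}$ in the others, and computes explicitly; this forces the detour through $\mathbbm{n}\geq 3$ to have an $e_{yy}$ available for the vanishing arguments. What your approach buys is that the final step ``hence central'' is made fully explicit, whereas the paper leaves the implication ``$p_n(c,b_1,\dots,b_{n-1})=0$ for all $b_k$ $\Rightarrow$ $c$ central'' to the reader. One small correction: your displayed formula $p_{\mathbbm{n}}(e_{xx},\beta,e_{yy},\dots,e_{yy})=\beta(x,y)e_{xy}$ omits a possible $\pm\beta(y,x)e_{yx}$ term when $x\simeq y$, but reading off the $e_{xy}$-coefficient still yields $\beta(x,y)=0$, so the conclusion is unaffected.
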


\begin{proof}
Let $a_1, \cdots ,a_n \in FI(X_i,\mathcal{R})$. Then  $p_n (a_1, \cdots, a_n)\in FI(X_i,\mathcal{R})$. Furthermore, we get
$$
\begin{aligned}
 \Psi_i(p_n (a_1, \cdots, a_n))&=\pi_i\circ \Psi(p_n (a_1, \cdots, a_n))\\
&=\pi_i[p_n (\Psi(a_1), \cdots, a_n)+ \cdots + p_n (a_1, \cdots, \Psi(a_n ))] \\
&= p_n (\pi_i\circ \Psi(a_1), \cdots, a_n)+ \cdots + p_n (a_1, \cdots, \pi_i\circ \Psi(a_n ))   \\
&=p_n (\Psi_i(a_1), \cdots, a_n)+ \cdots +p_n (a_1, \cdots,  \Psi_i(a_n )).
\end{aligned}
$$
Thus $\Psi_i$ is a Lie $n$-derivation on $FI(X_i,\mathcal{R})$. 

Next we show that $\Psi_{i} ^{\bot}$ is a map from $FI(X_i,\mathcal{R})$ to 
$\bigoplus\limits_{i\neq j\in \mathcal{J}} \mathcal{C}(X_j,\mathcal{R})$. Suppose $a \in FI(X_i,\mathcal{R})$ and 
$b_1, \cdots, b_{n-1} \in FI(X_j, \mathcal{R})$ for some $j\in \mathcal{J}$ with $j\neq i$.
Applying $\Psi$ to $p_n (a, b_1, \cdots, b_{n-1})=0$ yields that
\begin{equation}\label{e3.6}
p_n (\Psi(a), b_1, \cdots, b_{n-1})+p_n (a, \Psi(b_1), \cdots, b_{n-1})=0.
\end{equation}
It is clear that $p_n (a,\Psi(b_1), \cdots, b_{n-1})=0$, which is due to the fact $a\in FI(X_i,\mathcal{R})$ and $b_k\in FI(X_j,\mathcal{R})$, $k=1,2,\cdots n-1$. 
We therefore get $p_n (\Psi(a), b_1, \cdots, b_{n-1})=0$. And hence 
\begin{equation}\label{e3.7}
\begin{split}
p_n (\Psi_i^{\bot}(a), b_1, \cdots, b_{n-1})=&p_n (\Psi_i+\Psi_i^{\bot})(a), b_1, \cdots, b_{n-1})\\
=&p_n (\Psi(a), b_1, \cdots, b_{n-1})\\
=&0.
\end{split}
\end{equation}
Note that $\Psi_i^{\bot}(a)\in \bigoplus\limits_{j\in \mathcal{J}\setminus \{i\}}FI(X_j,\mathcal{R})$. 
Then \eqref{e3.7} implies that 
$$\Psi_i^{\bot}(a)\in  \bigoplus_{j\in \mathcal{J}\setminus \{i\}}\mathcal{C}(X_j,\mathcal{R}).$$
This shows that $\Psi_{i} ^{\bot}$ is a map from $FI(X_i,\mathcal{R})$ to $\bigoplus_{i\neq j\in \mathcal{J}} \mathcal{C}(X_j,\mathcal{R})$.
\end{proof}

In what follows, we denote the cardinality of $X$ by $|X|$.  Suppose that $|X|=\infty$ if $X$ is infinite.
We say that a connected component $X_i$ of $X$ is {\it full} if $x\simeq y$ for all $x,y\in X_i$. 
Next we will prove that $L^1(\mathcal{S}_{xy})\subset \mathcal{S}_{xy}$ for all $x<y$ in $X$. 
However, we find that the main difficulty lies in the case that $|X_i|=2$ and $X_i$ is full for some $i\in \mathcal{J}$. 
 
\begin{lemma}\label{l3.4}
 If $X_i$ is full and $|X_i|=2$ for some $i\in \mathcal{J}$, then $L^1(\mathcal{S}_{xy})\subset \mathcal{S}_{xy}$ for all $x< y\in X_i$.
\end{lemma}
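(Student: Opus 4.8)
The plan is to write $X_i=\{x,y\}$ with $x\neq y$ and $x\simeq y$, so that $FI(X_i,\mathcal{R})$ is the free $\mathcal{R}$-module on $e_{xx},e_{yy},e_{xy},e_{yx}$, i.e.\ a copy of the $2\times2$ matrix algebra over $\mathcal{R}$, and then, by interchanging the roles of $x$ and $y$, to reduce the statement to: $L^1(re_{xy})\in\mathcal{S}_{xy}$ for every $r\in\mathcal{R}$. Set $\mathbbm{n}=n$ if $n\geq3$ and $\mathbbm{n}=3$ if $n=2$, so that $\mathbbm{n}\geq3$ and $L^1$ is a nonlinear Lie $\mathbbm{n}$-derivation. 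I will use two facts throughout. By Lemma \ref{l3.2}, $L^1(e_{xx}),L^1(e_{yy})\in\mathcal{C}(X,\mathcal{R})$, so these elements have zero bracket with everything and drop out of any Lie-product identity. And, because $X_i=\{x,y\}$ is a full connected component of $X$, for every $\gamma\in FI(X,\mathcal{R})$ one has $[e_{xx},\gamma]=\gamma(x,y)e_{xy}-\gamma(y,x)e_{yx}$ (the products $e_{xx}\gamma$ and $\gamma e_{xx}$ keep only the row and column at $x$, and the partner index is forced to lie in $X_i$).

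First I would localise. From $[e_{xx},re_{xy}]=re_{xy}$ and $[re_{xy},e_{yy}]=re_{xy}$ one gets $p_{\mathbbm{n}}(e_{xx},re_{xy},e_{yy},\dots,e_{yy})=re_{xy}$; applying $L^1$ and discarding the two central terms leaves $L^1(re_{xy})=p_{\mathbbm{n}}(e_{xx},L^1(re_{xy}),e_{yy},\dots,e_{yy})$. Writing $\gamma:=L^1(re_{xy})$ and evaluating the right-hand side with the bracket formula above together with $[e_{xy},e_{yy}]=e_{xy}$ and $[e_{yx},e_{yy}]=-e_{yx}$, it equals $\gamma(x,y)e_{xy}+(-1)^{\mathbbm{n}-1}\gamma(y,x)e_{yx}$. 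Equating this with $\gamma$ forces every entry of $\gamma$ outside $\{(x,y),(y,x)\}$ to vanish, so $L^1(re_{xy})=b_1(r)e_{xy}+b_2(r)e_{yx}$ with $b_1(r)=\gamma(x,y)$, $b_2(r)=\gamma(y,x)$, and moreover $b_2(r)=(-1)^{\mathbbm{n}-1}b_2(r)$. When $\mathbbm{n}$ is even this already gives $2b_2(r)=0$, hence $b_2(r)=0$ by $2$-torsionfreeness, and we are done. The real work is the case $\mathbbm{n}$ odd, where that last relation says nothing; I expect this to be the main obstacle.

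For the odd case the second step is to feed $L^1$ an identity whose value is diagonal but whose derived form genuinely involves $b_2(r)$. Since $[re_{xy},e_{xy}]=0$ we have $p_{\mathbbm{n}}(e_{xx},re_{xy},e_{yy},\dots,e_{yy},e_{xy})=0$; applying $L^1$ and again dropping the central terms leaves $0=p_{\mathbbm{n}}(e_{xx},L^1(re_{xy}),e_{yy},\dots,e_{yy},e_{xy})+p_{\mathbbm{n}}(e_{xx},re_{xy},e_{yy},\dots,e_{yy},L^1(e_{xy}))$. Substituting $L^1(re_{xy})=b_1(r)e_{xy}+b_2(r)e_{yx}$ and $L^1(e_{xy})=b_1(1)e_{xy}+b_2(1)e_{yx}$ from the first step, and using $[e_{yx},e_{xy}]=e_{yy}-e_{xx}$ and $[e_{xy},e_{yx}]=e_{xx}-e_{yy}$ (the $e_{xy}$-parts contribute nothing, as $[e_{xy},e_{xy}]=0$), a routine computation collapses this equation to $(b_2(r)+r\,b_2(1))(e_{xx}-e_{yy})=0$, i.e.\ $b_2(r)=-r\,b_2(1)$ for every $r\in\mathcal{R}$. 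Taking $r=1$ gives $2b_2(1)=0$, so $b_2(1)=0$ by $2$-torsionfreeness, and therefore $b_2(r)=0$ for all $r$. Thus $L^1(re_{xy})=b_1(r)e_{xy}\in\mathcal{S}_{xy}$; exchanging $x$ and $y$ gives $L^1(\mathcal{S}_{yx})\subset\mathcal{S}_{yx}$ as well. The subtle point, as anticipated, is exactly the odd case: there the symmetric ``conjugation'' identity of the first step cannot see the $(y,x)$-entry, and one must instead exploit the nilpotent relation $[e_{yx},e_{xy}]=e_{yy}-e_{xx}$ to expose it, after which $2$-torsionfreeness closes the argument.
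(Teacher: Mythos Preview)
Your proof is correct and notably more economical than the paper's. Both begin identically: applying $L^1$ to $p_{\mathbbm{n}}(e_{xx},re_{xy},e_{yy},\dots,e_{yy})=re_{xy}$ and using $L^1(e_{xx}),L^1(e_{yy})\in\mathcal{C}(X,\mathcal{R})$ to localise $L^1(re_{xy})$ to $\mathcal{S}_{xy}\oplus\mathcal{S}_{yx}$, which already disposes of the even-$\mathbbm{n}$ case by $2$-torsionfreeness. The divergence is in the odd case. The paper passes to two auxiliary maps $\Phi=L^1+\mathrm{ad}_{L^1_{xy}(e_{xy})e_{yy}}$ and $\Psi=\Phi-\psi_{\bf F}$, proves that the $(x,y)$-component defines an additive derivation $f$ on $\mathcal{R}$, then shows $\Psi(\mathcal{D}(X_i,\mathcal{R}))\subset\mathcal{C}(X,\mathcal{R})$, and only then kills the $(y,x)$-component via $p_{\mathbbm{n}}(e_{xy},e_{yy},\dots,e_{yy},e_{xy})=0$. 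You bypass all this machinery: by choosing the identity $p_{\mathbbm{n}}(e_{xx},re_{xy},e_{yy},\dots,e_{yy},e_{xy})=0$ (with $e_{xx}$ rather than $e_{xy}$ in the first slot), the only non-central derived terms involve $L^1(re_{xy})$ and $L^1(e_{xy})$, which are already known to lie in $\mathcal{S}_{xy}\oplus\mathcal{S}_{yx}$, and you extract $b_2(r)=-rb_2(1)$ directly from $L^1$.

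What each approach buys: yours is self-contained for the lemma at hand. The paper's detour, however, is not wasted---the intermediate fact that $\Psi$ sends $\mathcal{D}(X_i,\mathcal{R})$ into the center is explicitly invoked later in the proof of Lemma~\ref{l3.6} (the $|X_i|=2$ full case there), so the paper is effectively front-loading work it needs anyway. If one were rewriting the paper, your argument would clean up Lemma~\ref{l3.4}, but one would then have to recover that diagonal-to-center fact separately when it is needed.
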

\begin{proof}
The proof of the lemma will be divided into four steps. In what follows, let 
$\mathbbm{n}$ be the number associated to $n$ defined by \eqref{eq1.8}.

{\bf Step 1}.  Since $X_i$ is full and $|X_i|=2$, we see that $X_i=\{x,y\}$ and $x\simeq y$. 
Applying $L^1$ to $p_\mathbbm{n}(e_{xx}, re_{xy}, e_{yy}, $ $\cdots, e_{yy})=re_{xy}$ and considering Lemma \ref{l3.2},  we obtain
\begin{equation}\label{e3.8}
\begin{split}
L^1(re_{xy})&=\, L^1(p_\mathbbm{n}(e_{xx},re_{xy},e_{yy},\cdots, e_{yy}))\\
&=\,p_\mathbbm{n}(e_{xx},L^1(re_{xy}),e_{yy},\cdots, e_{yy})\\
&=\, {L^1}_{xy}(re_{xy})e_{xy}+(-1)^{\mathbbm{n}-1}{L^1}_{yx}(re_{xy})e_{yx}.
\end{split}
\end{equation}
Let $\Phi=L^1+{\rm ad}_{{L^1}_{xy}(e_{xy})e_{yy}}$. Then it is not difficult to see that
\begin{equation}
\Phi(e_{xy})=(-1)^{\mathbbm{n}-1}{L^1}_{yx}(e_{xy})e_{yx}
\end{equation} and 
\begin{equation}
\Phi(re_{xy})=[{L^1}_{xy}(re_{xy})-{L^1}_{xy}(e_{xy})]e_{xy}+(-1)^{\mathbbm{n}-1}{L^1}_{yx}(e_{xy})e_{yx}.
\end{equation}
Let $f, g$ be the functions on $\mathcal{R}$ given by $f(r)={L^1}_{xy}(re_{xy})-{L^1}_{xy}(e_{xy}),$ $g(r)=(-1)^{\mathbbm{n}-1}{L^1}_{yx}(re_{xy})$ for all $r\in \mathcal{R}$. 
Then we get
\begin{equation}
\Phi(re_{xy})=f(r)e_{xy}+g(r)e_{yx}.
\end{equation}

{\bf Step 2}. Let us show that $f$ is an additive derivation on $\mathcal{R}$. 

For all $r,s\in \mathcal{R}$, we have 
\begin{eqnarray}
&p_\mathbbm{n}(re_{xx},se_{xy},e_{yy},\cdots,e_{yy})=rse_{xy}\label{e3.12}\\
&p_\mathbbm{n}(re_{xx},e_{xy},e_{yy},\cdots,e_{yy})=re_{xy}.\label{e3.13}
\end{eqnarray}
Applying $\Phi$ to the both sides of \eqref{e3.12} and using Lemma \ref{l3.2}, we get
\begin{equation}\label{e3.14}
\begin{split}
f(rs)e_{xy}+g(rs)e_{yx} &=\, p_\mathbbm{n}(\Phi(re_{xx}),se_{xy},e_{yy},\cdots,e_{yy})+p_\mathbbm{n}(re_{xx},\Phi(se_{xy}),e_{yy},\cdots,e_{yy})\\
&=\, p_\mathbbm{n}(\Phi(re_{xx}),se_{xy},e_{yy},\cdots,e_{yy})+p_\mathbbm{n}(re_{xx},f(s)e_{xy}+g(s)e_{yx},e_{yy},\cdots,e_{yy})\\
&=\, p_\mathbbm{n}(\Phi(re_{xx}),se_{xy},e_{yy},\cdots,e_{yy})+p_\mathbbm{n}(re_{xx},f(s)e_{xy},e_{yy},\cdots,e_{yy})\\
&\, \, \, \, \, \, \, +p_\mathbbm{n}(re_{xx},g(s)e_{yx},e_{yy},\cdots,e_{yy})\\
\end{split}
\end{equation}
Note that
\begin{eqnarray*}
&p_\mathbbm{n}(\Phi(re_{xx}),se_{xy},e_{yy},\cdots,e_{yy})\in \mathcal{S}_{xy},\\
&p_\mathbbm{n}(re_{xx},f(s)e_{xy},e_{yy},\cdots,e_{yy})\in \mathcal{S}_{xy},\\
&p_\mathbbm{n}(re_{xx},g(s)e_{yx},e_{yy},\cdots,e_{yy})\in \mathcal{S}_{yx}.
\end{eqnarray*}
By \eqref{e3.14}, we know that  
\begin{eqnarray}\label{e3.15}
p_\mathbbm{n}(\Phi(re_{xx}),se_{xy},e_{yy},\cdots,e_{yy})+p_\mathbbm{n}(re_{xx},f(s)e_{xy},e_{yy},\cdots,e_{yy})=f(rs)e_{xy}.
\end{eqnarray}
Applying $\Phi$ to the both sides of \eqref{e3.13} yields that
\begin{equation*}
\begin{split}
f(r)e_{xy}+g(r)e_{yx}=&p_\mathbbm{n}(\Phi(re_{xx}),e_{xy},e_{yy},\cdots,e_{yy})+p_\mathbbm{n}(re_{xx},\Phi(e_{xy}),e_{yy},\cdots,e_{yy})\\
=&p_\mathbbm{n}(\Phi(re_{xx}),e_{xy},e_{yy},\cdots,e_{yy})+p_\mathbbm{n}(re_{xx},g(1)e_{yx},e_{yy},\cdots,e_{yy})\\
\end{split}
\end{equation*}
This implies that
\begin{eqnarray}
p_\mathbbm{n}(\Phi(re_{xx}),e_{xy},e_{yy},\cdots,e_{yy})=f(r)e_{xy},\label{e3.16}
\end{eqnarray}
and
\begin{eqnarray}
p_\mathbbm{n}(re_{xx},g(1)e_{yx},e_{yy},\cdots,e_{yy})=g(r)e_{yx}.\label{e3.17}
\end{eqnarray}
Combining \eqref{e3.15} with \eqref{e3.16} gives that
\begin{equation}
f(rs)=rf(s)+sf(r).
\end{equation}

Next applying $\Phi$ to $p_\mathbbm{n}(e_{xx}-re_{xy},e_{xx}+se_{xy},e_{yy},\cdots,e_{yy})=(r+s)e_{xy}$, we obtain 
\begin{equation}\label{e3.19}
\begin{split}
&p_\mathbbm{n}(\Phi(e_{xx}-re_{xy}),e_{xx}+se_{xy},e_{yy},\cdots,e_{yy})+p_\mathbbm{n}(e_{xx}-re_{xy},\Phi(e_{xx}+se_{xy}),e_{yy},\cdots,e_{yy})\\
&=f(r+s)e_{xy}+g(r+s)e_{xy}.
\end{split}
\end{equation}
Applying $\Phi$ to $p_\mathbbm{n}(e_{xx}-re_{xy},e_{xy},e_{yy},\cdots,e_{yy})=e_{xy}$ and $p_\mathbbm{n}(e_{xx}-re_{xy},e_{xx},e_{yy},\cdots,e_{yy})=re_{xy}$, we get 
\begin{eqnarray*}
&p_\mathbbm{n}(\Phi(e_{xx}-re_{xy}),e_{xy},e_{yy},\cdots,e_{yy})=0,\\
&p_\mathbbm{n}(\Phi(e_{xx}-re_{xy}),e_{xx},e_{yy},\cdots,e_{yy})=f(r)e_{xy}+g(r)e_{yx}.
\end{eqnarray*}
Combining the previous two equalities, we arrive at
\begin{equation}\label{e3.20}
p_\mathbbm{n}(\Phi(e_{xx}-re_{xy}),e_{xx}+se_{xy},e_{yy},\cdots,e_{yy})=f(r)e_{xy}+g(r)e_{yx}.
\end{equation}
In a similar way,  one can show that 
\begin{equation}\label{e3.21}
p_\mathbbm{n}(e_{xx}-re_{xy},\Phi(e_{xx}+se_{xy}),e_{yy},\cdots,e_{yy})=f(s)e_{xy}+g(s)e_{yx}.
\end{equation}
Combining \eqref{e3.19} with \eqref{e3.21}, we conclude that $f(r+s)=f(r)+f(s).$
So $f$ is an additive derivation on $\mathcal{R}$.

Let ${\bf F}=\{{\bf f}_i\mid i\in \mathcal{I}\}$ such that ${\bf f}_i=f$ and ${\bf f}_j=0$ for $j\in \mathcal{I}, j\neq i$. 
Let $\psi_{\bf F}$ be the additive induced Lie $n$-derivation given 
by \eqref{eq1.18}-\eqref{eq1.20}, and $\Psi:=\Phi-\psi_{\bf F}$. It is straightforward to see that 
\begin{eqnarray}\label{e3.22}
\Psi(re_{xy})=g(r)e_{yx}.
\end{eqnarray}

{\bf Step 3}.  We will show that $\Psi(\mathfrak{d})\in \mathcal{C}(X, \mathcal{R})$ for all $\mathfrak{d}\in \mathcal{D}(X_i,\mathcal{R})$. 

Since $\Phi$ and $\psi_{\bf F}$ leaves $\mathcal{D}(X,\mathcal{R})$ invariant, $\Psi$ also leaves $\mathcal{D}(X,\mathcal{R})$ invariant.
Let $\mathfrak{d}=d_1e_{xx}+d_2e_{yy}$ be an arbitrary element in $\mathcal{D}(X_i,\mathcal{R})$. Then we have 
\begin{equation*}
p_\mathbbm{n}(\mathfrak{d}, e_{xy},e_{yy},\cdots, e_{yy})=(d_1-d_2)e_{xy}.
\end{equation*}
Applying $\Psi$ to the previous equality and using \eqref{e3.22} gives rise to
\begin{equation}
p_\mathbbm{n}(\Psi(\mathfrak{d}),e_{xy},e_{yy},\cdots, e_{yy})+p_\mathbbm{n}(\mathfrak{d},g(1)e_{yx},e_{yy},\cdots, e_{yy})=g(d_1-d_2)e_{yx}.
\end{equation} 
Thus we get 
\begin{equation}\label{e3.24}
p_\mathbbm{n}(\Psi(\mathfrak{d}),e_{xy},e_{yy},\cdots, e_{yy})=0.
\end{equation}
On the other hand, if $u<v\notin X_i$, then $p_\mathbbm{n}(\mathfrak{d}, e_{uv},e_{vv},\cdots, e_{vv})=0$. 
Applying $\Psi$ to it yields that
\begin{equation}\label{e3.25}
p_\mathbbm{n}(\Psi(\mathfrak{d}),e_{uv},e_{vv},\cdots, e_{vv})=0
\end{equation} 
for all $u<v\notin X_i$.
Combining \eqref{e3.24} with \eqref{e3.25}, we assert that $\Psi(\mathfrak{d})\in \mathcal{C}(X_i,\mathcal{R}) $ for all $\mathfrak{d}\in \mathcal{D}(X_i,\mathcal{R})$.

{\bf Step 4}. Let us prove that $g=0$. By \eqref{e3.17} it follows that
\begin{equation}
g(r)=(-1)^{\mathbbm{n}-1}rg(1),\ r\in \mathcal{R}.
\end{equation}
If $\mathbbm{n}$ is even, then $g(r)=-rg(1)$. Let us choose $r=1$. Then we see that $g(1)=0$. So $g(r)=-rg(1)=0$ for all $r\in \mathcal{R}$.
If $\mathbbm{n}$ is odd, then $g(r)=rg(1)$. Note that $\Psi(\mathcal{D}(X_i,\mathcal{R}))\subset \mathcal{C}(X,\mathcal{R})$. 
Applying $\Psi$ to $p_\mathbbm{n}(e_{xy},e_{yy},\cdots, e_{yy},e_{xy})=0$ and using the relation \eqref{e3.22}, we arrive at 
\begin{equation}
\begin{split}
0& =p_\mathbbm{n}(\Psi(e_{xy}),e_{yy},\cdots, e_{yy},e_{xy})+p_\mathbbm{n}(e_{xy},e_{yy},\cdots, e_{yy},\Psi(e_{xy}))\\
&=p_\mathbbm{n}(g(1)e_{yx},e_{yy},\cdots, e_{yy},e_{xy})+p_\mathbbm{n}(e_{xy},e_{yy},\cdots, e_{yy},g(1)e_{yx})\\
&=2g(1)(e_{xx}-e_{yy}).
\end{split}
\end{equation}
This implies that $g(1)=0$. And hence $g(r)=rg(1)=0$ for all $r\in \mathcal{R}$.

Since $g(r)=(-1)^{\mathbbm{n}-1}{L^1}_{yx}(re_{xy})$ for all $r\in \mathcal{R}$, by \eqref{e3.8} we assert that  
$L^1(re_{xy})=L^1_{xy}(re_{xy})e_{xy}\in \mathcal{S}_{xy}$.
\end{proof}

The following lemma is a general version of Lemma \ref{l3.4}.

\begin{lemma}\label{l3.5}
	For all $x<y$ in $X$, we have $L^1(\mathcal{S}_{xy})\subset \mathcal{S}_{xy}$.
\end{lemma}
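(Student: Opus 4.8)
The plan is to fix a pair $x<y$ in $X$ and an element $r\in\mathcal{R}$ and to show $L^1(re_{xy})\in\mathcal{S}_{xy}$; throughout I let $\mathbbm{n}$ be the integer associated to $n$ by \eqref{eq1.8}, so that $L^1$ is a nonlinear Lie $\mathbbm{n}$-derivation, and I use repeatedly that $L^1(e_{ww})\in\mathcal{C}(X,\mathcal{R})$ for every $w$ (Lemma~\ref{l3.2}) and that $L^1$ leaves $\mathcal{D}(X,\mathcal{R})$ invariant. The argument splits according to whether $x\simeq y$.

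First comes the \emph{basic reduction}. Applying $L^1$ to $p_{\mathbbm{n}}(e_{xx},re_{xy},e_{yy},\dots,e_{yy})=re_{xy}$ and using that $L^1(e_{xx})$ and $L^1(e_{yy})$ are central (hence commute with everything), every summand in which $L^1$ lands on one of the idempotents drops out, leaving
\[
L^1(re_{xy})=p_{\mathbbm{n}}(e_{xx},L^1(re_{xy}),e_{yy},\dots,e_{yy}).
\]
Writing $L^1(re_{xy})=\sum_{u\le v}t_{uv}e_{uv}$ and evaluating the iterated commutator on the right (the bracket with $e_{xx}$ keeps only matrix units having exactly one coordinate equal to $x$, and the subsequent brackets with $e_{yy}$ keep only those having a coordinate equal to $y$), one obtains $L^1(re_{xy})=t_{xy}e_{xy}$ when $x\not\simeq y$, and $L^1(re_{xy})=t_{xy}e_{xy}+\varepsilon\,t_{yx}e_{yx}$ with $\varepsilon=\pm1$ when $x\simeq y$. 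This proves the lemma when $x\not\simeq y$; moreover the very same computation, carried out for an arbitrary pair $u<v$ in place of $(x,y)$, shows $L^1(\mathcal{S}_{uv})\subset\mathcal{S}_{uv}$ for \emph{every} $u<v$ with $u\not\simeq v$, a fact used below.

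It remains to prove $t_{yx}=0$ in the case $x\simeq y$; let $X_i$ be the connected component containing $x$ and $y$. If $|X_i|=2$ then $X_i$ is full and this is exactly Lemma~\ref{l3.4}. If $|X_i|\ge 3$, connectedness produces a point $z\in X_i\setminus\{x,y\}$ with $z\sim x$ or $z\sim y$; since $x\simeq y$, this $z$ is comparable to both $x$ and $y$, and exactly one of the following holds: (i) $z\simeq x$ (so $x,y,z$ are pairwise $\simeq$); (ii) $z<x$ and $z<y$; (iii) $x<z$ and $y<z$. In each case I would pick a matrix unit $e$ attached to $z$ — namely $e=e_{zy}$ in cases (i) and (ii), and $e=e_{xz}$ in case (iii) — with $[e,re_{xy}]=0$, so that the product $p_{\mathbbm{n}}(e,re_{xy},\ast,\dots,\ast)$, with the remaining slots filled by $e_{xx}$ (cases (i), (ii)) or $e_{zz}$ (case (iii)), vanishes identically. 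Applying $L^1$ to this identity: the summand in which $L^1$ hits $re_{xy}$ contributes $\pm\varepsilon t_{yx}$ times a nonzero matrix unit ($e_{zx}$ in cases (i), (ii), and $e_{yz}$ in case (iii)); the summands hitting a trailing idempotent vanish by centrality; and the summand hitting $e$ either vanishes outright — in cases (ii) and (iii) $e$ is attached to a strictly comparable pair, so $L^1(e)\in\mathcal{S}$ by the $\not\simeq$ case already established and hence $[L^1(e),re_{xy}]=0$ — or contributes a multiple of $e_{xz}$, a matrix unit distinct from $e_{zx}$ (case (i)). Comparing the coefficient of the relevant matrix unit on the two sides then forces $t_{yx}=0$.

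The only genuine obstacle is the case $|X_i|=2$, the full two-element component, where no third point $z$ is available; this is precisely the situation isolated in Lemma~\ref{l3.4}, which we invoke. Apart from that, the proof is a disciplined bookkeeping of matrix-unit commutators of the kind already carried out in Section~\ref{xxsec2} and in the proof of Lemma~\ref{l3.4}.
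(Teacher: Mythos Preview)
Your proof is correct and follows essentially the same strategy as the paper's: both use the basic reduction via $p_{\mathbbm{n}}(e_{xx},re_{xy},e_{yy},\dots,e_{yy})=re_{xy}$ together with Lemma~\ref{l3.2} to confine $L^1(re_{xy})$ to $\mathcal{S}_{xy}+\mathcal{S}_{yx}$, then kill the $e_{yx}$-component in the $x\simeq y$ case by invoking Lemma~\ref{l3.4} when $|X_i|=2$ and, when $|X_i|\ge 3$, by choosing a third point $z\sim x$ and exploiting a vanishing iterated commutator. The only cosmetic difference is that the paper collapses your cases (i) and (iii) into a single ``WLOG $x<z$'' argument using $p_{\mathbbm{n}}(re_{xy},e_{xz},e_{zz},\dots,e_{zz})=0$ (with ``similar'' for $z<x$), whereas you separate the three relations between $z$ and $x$ and vary the trailing idempotent accordingly.
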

\begin{proof}
Suppose that $x<y$ in $X_i$,  $i\in \mathcal{J}$. If $|X_i|=2$ and $X_i$ is full, then $L^1(\mathcal{S}_{xy})\subset \mathcal{S}_{xy}$ by Lemma \ref{l3.4}. 
Let $\mathbbm{n}$ be the number associated to $n$ defined by \eqref{eq1.8}.
If $|X_i|=2$ and $X_i$ is not full, then for all $r\in \mathcal{R}$ we have 
\begin{equation}\label{e3.28}
\begin{split}
L^1(re_{xy})& =L^1(p_\mathbbm{n}(e_{xx},re_{xy},e_{yy},\cdots, e_{yy}))\\
&=p_\mathbbm{n}(e_{xx},L^1(re_{xy}),e_{yy},\cdots, e_{yy})\\
&=L^1_{xy}(re_{xy})e_{xy}.
\end{split}
\end{equation}

Hence we also have $L^1(\mathcal{S}_{xy})\subset \mathcal{S}_{xy}$. 

Let us next consider the case of $|X_i|\geq 3$. If $y\nless x$, the relation \eqref{e3.28} still holds true, 
thus $L^1(re_{xy})\in \mathcal{S}_{xy}$. Now suppose that $y<x$, then $x\simeq y$.
Since $|X_i|\geq 3$, there exists an element $z\in X_i\setminus \{x,y\}$ such that  $z\sim x$. 
Without loss of generality, we may assume that $x< z$
(if $x>z$, the proof is similar). 
Note that \eqref{e3.8} is always true if $x\simeq y$. Applying $L^1$ to $p_\mathbbm{n}(re_{xy},e_{xz},e_{zz},...,e_{zz})=0$ 
yields
\begin{equation}
\begin{split}
&p_\mathbbm{n}(L^1_{xy}(re_{xy})e_{xy}+(-1)^{n-1}L^1_{yx}(re_{xy})e_{yx},e_{xz},e_{zz},...,e_{zz})\\
&+p_\mathbbm{n}(re_{xy},L^1_{xz}(e_{xz})e_{xz}+(-1)^{n-1}L^1_{zx}(e_{xz})e_{zx},e_{zz},...,e_{zz})=0.
\end{split}
\end{equation}
Considering the coefficient of $e_{yz}$ in above equality, we get 
$(-1)^{n+1}L^1_{yx}(re_{xy})=0$. By \eqref{e3.8} again, we see that 
$$
L^1(re_{xy})=L^1_{xy}(re_{xy})e_{xy}\in \mathcal{S}_{xy}.
$$
\end{proof}

Let $r_{xy}=L^1_{xy}(e_{xy})$ for all $x<y$. In view of Lemma \ref{l3.5}, we have
$$L^1(e_{xy})=r_{xy}e_{xy}\ \mathrm{for\ all}\ x<y \in X.$$ 

\begin{lemma}
The map $f:\le \longrightarrow \mathcal{R}$ defined by
\begin{equation}\label{e3.30}
	f(x,y)=\begin{cases}
	r_{xy},& \mathrm{if}\ x< y; \\
	0,& \mathrm{if}\ x=y.
	\end{cases}
\end{equation} 
is a transitive map.
\end{lemma}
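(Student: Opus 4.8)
The plan is to verify directly that $f$ satisfies the transitivity identity $f(x,y)+f(y,z)=f(x,z)$ for all $x\leq y\leq z$ in $X$, breaking into cases according to how many of the three elements coincide. If $x=y=z$, or if exactly two of them coincide, the identity is immediate from the definition \eqref{e3.30}, since $f$ vanishes on the diagonal and the two nonzero terms (if any) are literally equal. So the only substantive case is $x<y<z$, where we must show $r_{xy}+r_{yz}=r_{xz}$.

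For the case $x<y<z$, the natural tool is to feed a suitable product into the Lie $n$-derivation identity for $L^1$. First I would record that $L^1$ is a nonlinear Lie $n$-derivation (it differs from $L$ by the inner-like derivation ${\rm Ad}_{e_L}$), that $L^1$ leaves $\mathcal{D}(X,\mathcal{R})$ invariant by Lemma \ref{l2.8}, that $L^1(e_{uu})\in\mathcal{C}(X,\mathcal{R})$ for all $u$ by Lemma \ref{l3.2}, and crucially that $L^1(\mathcal{S}_{uv})\subseteq\mathcal{S}_{uv}$ for all $u<v$ by Lemma \ref{l3.5}, so $L^1(e_{uv})=r_{uv}e_{uv}$. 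Now observe that $p_\mathbbm{n}(e_{xy},e_{yz},e_{zz},\dots,e_{zz})=e_{xy}e_{yz}=e_{xz}$ (using that $[e_{xy},e_{yz}]=e_{xz}$ when $x<y<z$ and then $[e_{xz},e_{zz}]=e_{xz}$ repeatedly), where $\mathbbm{n}$ is the integer associated to $n$ by \eqref{eq1.8}; if $n\geq 3$ one uses $n$ slots and if $n=2$ one uses the $3$-derivation identity. Applying $L^1$ and expanding the right-hand side, the only surviving terms are $p_\mathbbm{n}(L^1(e_{xy}),e_{yz},e_{zz},\dots,e_{zz})=r_{xy}e_{xz}$, $p_\mathbbm{n}(e_{xy},L^1(e_{yz}),e_{zz},\dots,e_{zz})=r_{yz}e_{xz}$, while each term differentiating an $e_{zz}$ contributes a central element times a bracket structure that collapses: since $L^1(e_{zz})$ is central, $[\,\cdot\,,L^1(e_{zz})]=0$, so those terms vanish. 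The left-hand side is $L^1(e_{xz})=r_{xz}e_{xz}$. Comparing coefficients of $e_{xz}$ gives $r_{xz}=r_{xy}+r_{yz}$, as desired.

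A small technical point I would make explicit: when $n\geq 3$ the expansion $(\spadesuit)$ of $L^1(p_n(e_{xy},e_{yz},e_{zz},\dots,e_{zz}))$ has $n$ summands, of which $n-2$ differentiate a copy of $e_{zz}$ and hence vanish by centrality of $L^1(e_{zz})$; when $n=2$ one instead applies the Lie $3$-derivation identity (valid since a nonlinear Lie derivation is automatically a nonlinear Lie $3$-derivation, exactly as noted in the proof of Proposition \ref{p1.4}) to $p_3(e_{xy},e_{yz},e_{zz})=e_{xz}$, and the single term $p_3(e_{xy},e_{yz},L^1(e_{zz}))$ vanishes by centrality. Either way one arrives at the same linear relation among the $r$'s.

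I do not expect a serious obstacle here; the lemma is essentially a bookkeeping consequence of the structural facts already established, particularly Lemma \ref{l3.5}. The one place that demands a little care is confirming that every ``extra'' term in the Lie $n$-derivation expansion really does die, which hinges on the centrality of $L^1(e_{zz})$ (Lemma \ref{l3.2}) so that commutators against it vanish, and on the fact that $L^1(e_{xy})$ and $L^1(e_{yz})$ stay in the respective one-dimensional spaces $\mathcal{S}_{xy}$ and $\mathcal{S}_{yz}$ so no unwanted off-diagonal components appear. Once those are in hand, comparing the coefficient of $e_{xz}$ finishes the proof.
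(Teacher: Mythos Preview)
Your argument for the case $x<y<z$ with $x\neq z$ is correct and matches the paper exactly. However, your case analysis at the outset contains a genuine gap: you claim that when ``exactly two of them coincide'' the identity is immediate, but this fails for the sub-case $x=z\neq y$. Since $X$ is only pre-ordered, it can happen that $x<y$ and $y<x$ simultaneously (i.e., $x\simeq y$), and then with $z=x$ the transitivity identity becomes $r_{xy}+r_{yx}=0$, which is \emph{not} immediate from the definition \eqref{e3.30}. Your iterated-commutator computation also breaks down here, because $[e_{xy},e_{yx}]=e_{xx}-e_{yy}$ rather than $e_{xz}$, so $p_{\mathbbm{n}}(e_{xy},e_{yz},e_{zz},\dots,e_{zz})$ does not equal $e_{xz}$ and Lemma~\ref{l3.5} no longer applies directly to the left-hand side.

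The paper handles this missing case by a further split according to the size of the connected component $X_i$ containing $x$ and $y$. If $|X_i|\geq 3$ there is a third point $w\sim x$, and applying $L^1$ to $p_{\mathbbm{n}}(e_{xy},e_{yx},e_{xw},e_{ww},\dots,e_{ww})=e_{xw}$ yields $(r_{xy}+r_{yx})e_{xw}+r_{xw}e_{xw}=r_{xw}e_{xw}$, whence $r_{xy}+r_{yx}=0$. If $|X_i|=2$ (so $X_i=\{x,y\}$ is full), one applies $L^1$ to $p_{\mathbbm{n}}(e_{xy},e_{yy},\dots,e_{yy},e_{yx})=e_{xx}-e_{yy}$ to obtain $L^1(e_{xx}-e_{yy})=(r_{xy}+r_{yx})(e_{xx}-e_{yy})$, and then invokes Step~3 of the proof of Lemma~\ref{l3.4} to conclude that this element lies in $\mathcal{C}(X_i,\mathcal{R})$, forcing $r_{xy}+r_{yx}=0$. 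You would need to supply an argument of this kind to close the gap.
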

\begin{proof}
Let $x<y<z$ in $X_i$ for some $i\in \mathcal{J}$. We need to prove
$$f(x,y)+f(y,z)=f(x,z).$$ In what follows, let $\mathbbm{n}$ be the number associated to $n$ defined by \eqref{eq1.8}.
	
If $x\neq z$, then applying $L^1$ to $p_\mathbbm{n}(e_{xy},e_{yz} ,e_{zz},...,e_{zz})=e_{xz}$ gives 
  $$
\begin{aligned}
r_{xz}e_{xz}&=L^1(e_{xz})\\
&=L^1(p_\mathbbm{n}(e_{xy},e_{yz} ,e_{zz},...,e_{zz}))\\
&=p_\mathbbm{n}(L^1(e_{xy}),e_{yz} ,e_{zz},...,e_{zz})+p_\mathbbm{n}(e_{xy},L^1(e_{yz}) ,e_{zz},...,e_{zz})\\
&=(r_{xy}+r_{yz})e_{xz}.
\end{aligned}
$$
So we get $f(x,y)+f(y,z)=f(x,z).$
	
If $x=z$,  then $x\simeq y$. In this case, we need to handle the following two cases: $|X_i|=2$ or $|X_i|\geq 3$.

If $|X_i|=2$, then $X_i=\{x,y\}$ is full. Applying $L^1$ to $p_\mathbbm{n}(e_{xy},e_{yy},...,e_{yy},e_{yx})=e_{xx}-e_{yy}$ results in
\begin{equation*}
\begin{split}
L^1(e_{xx}-e_{yy})&=L^1(p_\mathbbm{n}(e_{xy},e_{yy},...,e_{yy},e_{yx}) )\\
&=p_\mathbbm{n}(L^1(e_{xy}),e_{yy},...,e_{yy},e_{yx})+p_\mathbbm{n}(e_{xy},e_{yy},...,e_{yy},L^1(e_{yx}))\\
&=r_{xy}(e_{xx}-e_{yy})+r_{yz}(e_{xx}-e_{yy})\\
&=(r_{xy}+r_{yx})(e_{xx}-e_{yy}).
\end{split}
\end{equation*}
On the other hand, it is shown in the step (3) of the proof of Lemma \ref{l3.4} that  $\Psi=L^1+{\rm ad}_{r_{xy}e_{yy}}-\psi_{\bf F}$ maps $\mathcal{D}(X_i,\mathcal{R})$ to $\mathcal{C}(X_i,\mathcal{R})$. 
Since $({\rm ad}_{r_{xy}e_{yy}}-\psi_{F})(e_{xx}-e_{yy})=0$, we obtain 
\begin{equation*}
L^1(e_{xx}-e_{yy})=(r_{xy}+r_{yx})(e_{xx}-e_{yy})\in \mathcal{C}(X_i,\mathcal{R}).
\end{equation*} 
This forces that $r_{xy}+r_{yx}=0$. And hence $f(x,y)+f(y,x)=0=f(x,x).$

If $|X_i|\geq 3$, there exists $w\in X_i\setminus \{x,y\}$ such that $w\thicksim x$. 
Assume that $x<w$ (the proof for the case of $w<x$ is similar). Applying $L^1$ to 
$$
p_\mathbbm{n}(e_{xy},e_{yx} ,e_{xw}, e_{ww},... , e_{ww})=e_{xw}
$$ 
produces
$$
\begin{aligned}
r_{xw}e_{xw} &= L^1(p_\mathbbm{n} (e_{xy},e_{yx} ,e_{xw}, e_{ww},... , e_{ww}))\\
&=p_\mathbbm{n} (L^1(e_{xy}),e_{yx} ,e_{xw},... , e_{ww})+ p_\mathbbm{n} (e_{xy},L^1(e_{yx}) ,e_{xw},... , e_{ww})+p_\mathbbm{n} (e_{xy},e_{yx} ,L^1(e_{xw}), ... , e_{ww}) \\
&=(r_{xy}+r_{yx})e_{xw}+r_{xw}e_{xw} .
\end{aligned}
$$
This implies that $r_{xy}+r_{yx}=0$. In this case, we also have $f(x,y)+f(y,x)=0=f(x,x)$.
\end{proof}

In the following, let $L_f$ be the transitive induced derivation of $FI(X,\mathcal{R})$ associated 
with $f$ given by \eqref{e3.30}, and let us define
$$L^2:=L^1-L_f.$$
Since $L_f(e_{xy})=f(x,y)e_{xy}$, we know that  $L^2(\mathcal{S}_{xy})\subset \mathcal{S}_{xy}$ 
and $L^2(e_{xy})=0$ for all $x< y$. Furthermore, $L^2$ leaves $\mathcal{D}(X,\mathcal{R})$ invariant by 
Lemma \ref{l2.8} and the definition of $L_f$, and $L^2(e_{xx})=L^1(e_{xx}) \in \mathcal{C}(X,\mathcal{R})$ for all $x\in X$ by Lemma \ref{l3.2}.
For each pair $x<y\in X$, let $f_{xy}:\mathcal{R}\longrightarrow \mathcal{R}$ be the function given by $f_{xy}(r)=L^2_{xy}(re_{xy})$ for all $r\in \mathcal{R}$. 
Thus we have 
\begin{equation}\label{e3.31}
L^2(re_{xy})=f_{xy}(r)e_{xy}. 
\end{equation}

\begin{lemma}\label{l3.7}
For all $x<y, u<v$ in $X$, if $e_{xy}\thickapprox e_{uv}$, then we have $f_{xy}=f_{uv}$.
\end{lemma}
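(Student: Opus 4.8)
The plan is to exploit the definition of the equivalence relation $\approx$ on $\mathbf{E}$: if $e_{xy}\approx e_{uv}$ with $e_{xy}\ne e_{uv}$, then there is a cycle $\{z_1,\dots,z_m\}$ containing both the edge $x\sim y$ and the edge $u\sim v$. Since $\approx$ is generated by the relation "lying on a common cycle", it suffices to treat the case where $e_{xy}$ and $e_{uv}$ sit on a single cycle, and then chain the resulting equalities along the cycle. So the first reduction I would make is: reduce to proving $f_{xy}=f_{uv}$ whenever $x\sim y$ and $u\sim v$ are two consecutive edges of a cycle, i.e.\ they share a vertex. After orienting each $\sim$ correctly (recall $e_{ab}=0$ unless $a\le b$, and note $x\simeq y$ forces $e_{xy},e_{yx}$ both nonzero), the generic situation becomes: a configuration like $x<y$, $y\sim z$, with $z$ a third vertex, where I can write a suitable $(n-1)$-th (or $\mathbbm{n}$-th) commutator whose value is $r e_{xz}$ (or $re_{yz}$, etc.) and whose only "moving" input is $re_{xy}$.

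Concretely, in the simplest subcase $x<y<z$, I would apply $L^2$ to $p_{\mathbbm n}(re_{xy},e_{yz},e_{zz},\dots,e_{zz})=re_{xz}$. Using \eqref{e3.31}, the fact that $L^2(e_{yz})=0$, and that $L^2$ kills $e_{zz}$ up to something central (Lemma \ref{l3.2}), the right-hand side becomes $p_{\mathbbm n}(f_{xy}(r)e_{xy},e_{yz},e_{zz},\dots,e_{zz})=f_{xy}(r)e_{xz}$, while the left-hand side is $L^2(re_{xz})=f_{xz}(r)e_{xz}$; hence $f_{xy}=f_{xz}$. The other orientations ($x<z<y$, $z<x<y$, and the cases where the shared vertex is $x$ rather than $y$) are handled the same way by choosing the commutator whose nonzero output isolates the relevant matrix unit; the central-valued noise coming from $L^2(e_{ww})$ always lands in $\mathcal{C}(X,\mathcal R)$ and so contributes nothing to the coefficient of an off-diagonal $e_{ab}$. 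When the cycle has length $2$, i.e.\ $x\simeq y$ and also $u\sim v$ lies on the $2$-cycle — this only happens when $\{u,v\}=\{x,y\}$ — the claim $f_{xy}=f_{yx}$ needs a separate short argument, e.g.\ applying $L^2$ to $p_{\mathbbm n}(re_{xy},e_{yx},e_{xw},e_{ww},\dots,e_{ww})$ for a third vertex $w$ when $|X_i|\ge 3$, and using the normalization established in the proof of Lemma \ref{l3.4} (that $\Psi$ maps $\mathcal D(X_i,\mathcal R)$ into $\mathcal C(X_i,\mathcal R)$) when $|X_i|=2$.

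I expect the main obstacle to be bookkeeping rather than conceptual: one has to verify, for each of the finitely many relative orders of the three (or two) vertices and each position of the shared vertex, that (a) the chosen product really equals $r$ times a single matrix unit, and (b) after applying $L^2$ every term except the one carrying $f_{xy}(r)$ or $f_{uv}(r)$ either vanishes (because $L^2$ annihilates $e_{ab}$ with $a<b$) or is central and hence invisible in the relevant coordinate. Once the one-step equality $f_{ab}=f_{cd}$ is in hand for any two edges sharing a vertex on a common cycle, a finite induction along the cycle — together with symmetry of $\approx$ — yields $f_{xy}=f_{uv}$ for all $e_{xy}\approx e_{uv}$, completing the proof.
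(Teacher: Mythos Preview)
Your reduction to consecutive edges of a cycle is correct and matches the paper's strategy, and your treatment of the ``composable'' orientations (e.g.\ $x<y<z$, or $x>y>z$) by writing a commutator whose output is a single off-diagonal unit is exactly right. The genuine gap is in the sentence ``the other orientations are handled the same way.'' They are not.

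Consider two consecutive edges on the cycle sharing the vertex $b$, with neighbours $a$ and $c$, in the \emph{peak} configuration $a<b$ and $c<b$. Then the relevant matrix units are $e_{ab}$ and $e_{cb}$, and these \emph{commute}: $e_{ab}e_{cb}=0=e_{cb}e_{ab}$. No iterated commutator built from $re_{ab}$, $e_{cb}$ and diagonal idempotents will isolate a nonzero off-diagonal unit carrying both $f_{ab}(r)$ and $f_{cb}(r)$; every such expression is zero before you apply $L^2$. The same obstruction occurs in the \emph{valley} configuration $a>b$ and $c>b$. So the local commutator trick that works for $x<y<z$ simply does not generalize to these two cases, and the ``bookkeeping'' you anticipate is in fact a missing idea.

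The paper resolves this by bringing in the global structure of the cycle. One writes $L^2(re_{bb})=\sum_{w}g_w(r)e_{ww}$ and applies $L^2$ to $p_{\mathbbm n}(re_{bb},e_{x_kx_l},e_{x_lx_l},\dots)=0$ for every edge $x_k\sim x_l$ of the cycle with $k,l\neq$ the index of $b$; this forces $g_{x_k}=g_{x_l}$ along the complementary arc, so in particular $g_a=g_c$. Separately, applying $L^2$ to $p_{\mathbbm n}(e_{ab},re_{bb},\dots,e_{bb})=re_{ab}$ yields $f_{ab}(r)=g_b(r)-g_a(r)$, and likewise $f_{cb}(r)=g_b(r)-g_c(r)$. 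Combining gives $f_{ab}=f_{cb}$. This step genuinely uses that $a$ and $c$ are connected by a path avoiding $b$, i.e.\ the cycle hypothesis, and is not a local computation. Your proposal needs this ingredient (or an equivalent one) to close the peak/valley cases.

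A smaller point: for the $2$-cycle $x\simeq y$ you do not need a third vertex or Lemma~\ref{l3.4}. Since $L^2(e_{xy})=L^2(e_{yx})=0$, applying $L^2$ to the identity
\[
p_{\mathbbm n}(re_{xy},e_{yy},\dots,e_{yy},e_{yx})=p_{\mathbbm n}(e_{xy},e_{yy},\dots,e_{yy},re_{yx})
\]
immediately gives $f_{xy}(r)(e_{xx}-e_{yy})=f_{yx}(r)(e_{xx}-e_{yy})$, hence $f_{xy}=f_{yx}$.
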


\begin{proof} 
Let $\mathbbm{n}$ be the number associated to $n$ defined by \eqref{eq1.8}.
If $x=u$, $y=v$, i.e. $e_{xy}= e_{uv}$, the result of the lemma is trivial. If $x=v$ and $y=u$, then $x\simeq y$. 
Applying $L^2$ to $p_\mathbbm{n}(re_{xy},e_{yy},...,e_{yy},e_{yx})=p_\mathbbm{n}(e_{xy},e_{yy},...,e_{yy},re_{yx})$ results in
$$
f_{xy}(r)=f_{yx}(r)\ {\rm{for\ all}}\ r\in \mathcal{R}.
$$ 
We therefore have $f_{xy}=f_{yx}$ if $x\simeq y$.
	
Now suppose that $\{x,y\}\neq \{u,v\}$, then there exists a cycle $C=\{x_1\thicksim x_2,\cdots, x_i\thicksim x_{i+1},\cdots, x_{m-1}\thicksim x_{m}, x_{m}\thicksim x_1\}$ 
containing both $x\thicksim y$ and $u\thicksim v$, where $m\geq 3$. In the following,  for $s\thicksim t\in X$ and $s\neq t$, let us define
\begin{equation}
	f_{\overline{st}}=\begin{cases}
	f_{st},& \mathrm{if}\ s< t; \\
	f_{ts},& \mathrm{if}\ t<s.
	\end{cases}
\end{equation}

In order to prove $f_{xy}=f_{uv}$, we only need to show that $f_{\overline{x_{i-1} x_i}}=f_{\overline{x_ix_{i+1}}}$ for $i\in \{2,\cdots, m\}$, where $x_{m+1}:=x_1$. 
Since  $L_f(re_{zz})=0$ for all $r\in \mathcal{R}$ and $z\in X$, we get $L^2(re_{x_i x_i})=L^1(re_{x_i x_i}) \in \mathcal{D}(X, \mathcal{R})$. Let $i\in \{1,2,\cdots, m\}$ 
be a fixed number, we can assume that
$$
L^2(re_{x_i x_i})=\sum_{w\in X}g_w(r)e_{ww},
$$
where $\{\, g_w\, |\,  w\in X\, \}$ are functions on $\mathcal{R}$. Let $1\leq k<l\leq m$ and $k,l\neq i$. 
Applying $L^2$ to $p_\mathbbm{n}(re_{x_i x_i},e_{x_kx_l},$ $e_{x_lx_l},...,e_{x_lx_l})=0$ (or $p_\mathbbm{n}(re_{x_i x_i},e_{x_lx_k},$ $e_{x_kx_k},...,e_{x_kx_k})=0$ if $x_l<x_k$), 
we arrive at
\begin{equation}
	(g_{x_k}(r)-g_{x_l}(r))e_{x_kx_l}=0 \ (\mathrm{or}\ (g_{x_l}(r)-g_{x_k}(r))e_{x_lx_k}=0).
\end{equation}
So we have $g_{x_k}(r)=g_{x_l}$ for all $1\leq k<l\leq m$ and $k,l\neq i$. This forces that
\begin{equation}\label{e3.34}
g_{x_{i-1}}=g_{x_{i-2}}=\cdots =g_{x_1}=g_{x_m}=g_{x_{m-1}}=\cdots =g_{x_{i+1}}.
\end{equation}
	
Let us next show that  $f_{\overline{x_{i-1} x_i}}=f_{\overline{x_i x_{i+1}}}$ for all $1<i\leq m$. We need to consider the following four cases.
	
{\bf Case} 1. $x_{i-1}<x_i<x_{i+1}$. For any $r\in \mathcal{R}$, applying $L^2$ to
$$
p_\mathbbm{n}(re_{x_{i-1}x_i},e_{x_i x_{i+1}},e_{x_{i+1}  x_{i+1}}, \cdots ,e_{x_{i+1}  x_{i+1}})=p_\mathbbm{n}(e_{x_{i-1}x_i},re_{x_i x_{i+1}},e_{x_{i+1}  x_{i+1}}, \cdots ,e_{x_{i+1}  x_{i+1}}),
$$ 
we see that ${x_{i-1}x_i}(r)=f_{x_i x_{i+1}}(r)$.
	
{\bf Case} 2. $x_{i-1}>x_i>x_{i+1}$. In a similar manner as in {\bf Case} 1, we assert that $f_{x_i x_{i-1}}=f_{x_{i+1}  x_i}$.
	
{\bf Case} 3. When $x_{i-1}<x_i, x_i> x_{i+1}$.  Applying $L^2$ to $p_\mathbbm{n}(e_{x_{i-1} x_i},re_{x_i x_i},\cdots, e_{x_i  x_{i}})=re_{x_{i-1} x_i}$
yields that
\begin{equation}\label{e3.35}
g_{x_i}(r)-g_{x_{i-1}}(r)=f_{x_{i-1} x_i}(r).
\end{equation}
 Similarly, applying $L^2$ to $p_\mathbbm{n}(e_{x_{i+1} x_i},re_{x_i x_i},\cdots, e_{x_i  x_{i}})=re_{x_{i+1} x_i}$ we obtain 
\begin{equation}\label{e3.36}
g_{x_i}(r)-g_{x_{i+1}}(r)=f_{x_{i+1} x_i}(r).
\end{equation}
By the relations \eqref{e3.34}-\eqref{e3.36} it follows that $f_{x_{i-1}x_i}=f_{x_{i+1}x_i}$.
	
{\bf Case} 4. When  $x_{i-1}>x_i$ and $x_i< x_{i+1}$.  The proof is similar to the proof of {\bf Case} 3.

Thus far, we complete the proof of this lemma.
\end{proof}

By Lemma \ref{l3.7}, for each $i\in \mathcal{I}$  we can define a function $\mathbf{f}_i=f_{xy}$, where $e_{xy}\in \mathbf{E}_i$. 
Then we have 
\begin{equation}\label{e3.37}
L^2(re_{xy})=\mathbf{f}_i(r)e_{xy}, \ e_{xy}\in {\bf E}_i.
\end{equation}

\begin{lemma}\label{l3.8}
For each $i\in \mathcal{I}$, the map $\mathbf{f}_i$ is an additive derivation on $\mathcal{R}$.
\end{lemma}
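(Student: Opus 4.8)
The plan is to fix $i \in \mathcal{I}$, pick a representative $e_{xy} \in \mathbf{E}_i$ with $x < y$, and verify the two defining properties of an additive derivation: $\mathbf{f}_i(r+s) = \mathbf{f}_i(r) + \mathbf{f}_i(s)$ and $\mathbf{f}_i(rs) = \mathbf{f}_i(r)s + r\mathbf{f}_i(s)$. Since $\mathbf{f}_i = f_{xy}$ and $L^2(re_{xy}) = f_{xy}(r)e_{xy}$ by \eqref{e3.37}, both identities will be extracted by applying $L^2$ to carefully chosen $(n-1)$-th commutators whose value is a scalar multiple of $e_{xy}$, then reading off the coefficient of $e_{xy}$. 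Throughout, let $\mathbbm{n}$ be the number associated to $n$ by \eqref{eq1.8}, so that $L^2$ is a nonlinear Lie $\mathbbm{n}$-derivation and we may freely use $\mathbbm{n}$-fold commutators. I will also use the established facts that $L^2$ leaves $\mathcal{D}(X,\mathcal{R})$ invariant, that $L^2(e_{xx}) \in \mathcal{C}(X,\mathcal{R})$ for all $x$, and that $L^2(e_{uv}) = 0$ for all $u<v$.

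For multiplicativity, I would mimic Step 2 of the proof of Lemma \ref{l3.4}: apply $L^2$ to the identity
$$
p_{\mathbbm{n}}(re_{xx}, se_{xy}, e_{yy}, \cdots, e_{yy}) = rse_{xy}.
$$
The left side expands into $p_{\mathbbm{n}}(L^2(re_{xx}), se_{xy}, e_{yy}, \cdots, e_{yy}) + p_{\mathbbm{n}}(re_{xx}, f_{xy}(s)e_{xy}, e_{yy}, \cdots, e_{yy})$; since $L^2(re_{xx}) \in \mathcal{C}(X,\mathcal{R})$, the first term contributes a multiple of $e_{xy}$ governed by the central scalar attached to $x$ minus that attached to $y$, and comparing with the analogous expansion of $L^2(p_{\mathbbm{n}}(re_{xx}, e_{xy}, e_{yy}, \cdots, e_{yy})) = L^2(re_{xy})$ isolates the contribution of the "diagonal derivative at $r$." Combining the two computations yields $f_{xy}(rs) = rf_{xy}(s) + sf_{xy}(r)$, exactly as in equation $(3.18)$ of the excerpt. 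The case $x \simeq y$ requires no extra care here because $L^2(\mathcal{S}_{xy}) \subset \mathcal{S}_{xy}$ already, so no $e_{yx}$ component can appear.

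For additivity, I would again follow the template of Step 2 of Lemma \ref{l3.4}: apply $L^2$ to
$$
p_{\mathbbm{n}}(e_{xx} - re_{xy}, e_{xx} + se_{xy}, e_{yy}, \cdots, e_{yy}) = (r+s)e_{xy},
$$
and separately to $p_{\mathbbm{n}}(e_{xx} - re_{xy}, e_{xy}, e_{yy}, \cdots, e_{yy}) = e_{xy}$ and $p_{\mathbbm{n}}(e_{xx} - re_{xy}, e_{xx}, e_{yy}, \cdots, e_{yy}) = re_{xy}$ to evaluate the two cross terms $p_{\mathbbm{n}}(L^2(e_{xx} - re_{xy}), e_{xx} + se_{xy}, e_{yy}, \cdots, e_{yy})$ and $p_{\mathbbm{n}}(e_{xx} - re_{xy}, L^2(e_{xx} + se_{xy}), e_{yy}, \cdots, e_{yy})$ as $f_{xy}(r)e_{xy}$ and $f_{xy}(s)e_{xy}$ respectively. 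Matching coefficients of $e_{xy}$ then gives $f_{xy}(r+s) = f_{xy}(r) + f_{xy}(s)$. The main obstacle I anticipate is bookkeeping: one must keep track of the off-diagonal components of $L^2(e_{xx} - re_{xy})$ — in particular that, because $L^2(e_{xx}) \in \mathcal{C}(X,\mathcal{R})$ and $L^2(re_{xy}) \in \mathcal{S}_{xy}$, this element has the clean form $(\text{central}) - f_{xy}(r)e_{xy}$ — and then to check that all the resulting $\mathbbm{n}$-fold commutators genuinely land in $\mathcal{S}_{xy}$ so that comparing $e_{xy}$-coefficients is legitimate. The general $|X_i| \ge 3$ or non-full situation presents no new difficulty, since Lemma \ref{l3.5} already guarantees $L^2(\mathcal{S}_{xy}) \subset \mathcal{S}_{xy}$ uniformly, so the entire argument is carried out inside a single $\mathcal{S}_{xy}$ without the auxiliary map $\Psi$ that was needed to kill the $e_{yx}$-part in Lemma \ref{l3.4}.
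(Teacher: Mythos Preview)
Your proposal is correct and follows essentially the same route as the paper: both additivity and the Leibniz rule are extracted by applying $L^2$ to the very same $\mathbbm{n}$-fold commutator identities you name, and the cross terms are evaluated via the auxiliary identities you list, exactly as the paper does.

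One caution on two side remarks that, while not needed for your main argument, are not justified as stated. You write that $L^2(re_{xx}) \in \mathcal{C}(X,\mathcal{R})$, but only $L^2(e_{xx}) \in \mathcal{C}(X,\mathcal{R})$ has been established (Lemma~\ref{l3.2}); for general $r$ we know only $L^2(re_{xx}) \in \mathcal{D}(X,\mathcal{R})$. Similarly, the claim that $L^2(e_{xx}-re_{xy})$ has the ``clean form'' $(\text{central}) - f_{xy}(r)e_{xy}$ tacitly assumes $L^2$ is additive, which is precisely what is at stake. Fortunately your actual method---comparing against $L^2(p_{\mathbbm{n}}(re_{xx}, e_{xy}, e_{yy},\ldots)) = L^2(re_{xy})$ and the two auxiliary identities---never evaluates $L^2(re_{xx})$ or $L^2(e_{xx}-re_{xy})$ directly; the Lie $\mathbbm{n}$-derivation rule together with $L^2(e_{xx}),L^2(e_{yy}) \in \mathcal{C}(X,\mathcal{R})$ and $L^2(e_{xy})=0$ suffices to kill the terms coming from the other slots, which is exactly how the paper proceeds.
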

\begin{proof}
Let $\mathbbm{n}$ be the number associated to $n$ defined by \eqref{eq1.8}. 
Let $e_{xy}\in \mathbf{E}_i$. Then for all $r, s\in \mathcal{R}$, we have
\begin{equation}\label{e3.38}
p_{\mathbbm{n}}(e_{xx}-re_{xy},e_{xx}+se_{xy} , e_{yy},\cdots , e_{yy})=(r+s)e_{xy}. 
\end{equation}
Applying $L^2$ to both sides of \eqref{e3.38} gives that 
\begin{equation}\label{e3.39}
p_{\mathbbm{n}}(L^2(e_{xx}-re_{xy}),e_{xx}+se_{xy} , e_{yy},\cdots , e_{yy})+p_{\mathbbm{n}}(e_{xx}-re_{xy},L^2(e_{xx}+se_{xy}) , e_{yy},\cdots , e_{yy})=\mathbf{f}_i(r+s)e_{xy}.
\end{equation}
On the other hand, we see that
\begin{equation}
\begin{split}
&p_{\mathbbm{n}}(L^2(e_{xx}-re_{xy}),e_{xx}+se_{xy} , e_{yy},\cdots , e_{yy})\\
&=p_{\mathbbm{n}}(L^2(e_{xx}-re_{xy}),e_{xx} , e_{yy},\cdots , e_{yy})+sp_{\mathbbm{n}}(L^2(e_{xx}-re_{xy}),e_{xy} , e_{yy},\cdots , e_{yy})\\
&=L^2(p_{\mathbbm{n}}(e_{xx}-re_{xy},e_{xx} , e_{yy},\cdots , e_{yy}))+sL^2(p_{\mathbbm{n}}(e_{xx}-re_{xy},e_{xy} , e_{yy},\cdots , e_{yy}))\\
&=L^2(re_{xy})+sL^2(e_{xy})\\
&=\mathbf{f}_i(r)e_{xy}.
\end{split}
\end{equation}
Similarly, we also have
\begin{equation}
\begin{split}
&p_{\mathbbm{n}}(e_{xx}-re_{xy},L^2(e_{xx}+se_{xy}) , e_{yy},\cdots , e_{yy})\\
&=p_{\mathbbm{n}}(e_{xx},L^2(e_{xx}+se_{xy}), e_{yy},\cdots, e_{yy})-rp_{\mathbbm{n}}(e_{xy},L^2(e_{xx}+se_{xy}), e_{yy},\cdots , e_{yy})\\
&=L^2(p_{\mathbbm{n}}(e_{xx},e_{xx}+se_{xy}, e_{yy},\cdots, e_{yy}))-rL^2(p_{\mathbbm{n}}(e_{xy},e_{xx}+se_{xy}, e_{yy},\cdots , e_{yy}))\\
&=L^2(se_{xy})+rL^2(-e_{xy})\\
&=\mathbf{f}_i(r)e_{xy}.
\end{split}
\end{equation}
The last equality is due to the relation $L^2(-e_{xy})=L^2(p_{\mathbbm{n}} (e_{xy},e_{xx}, e_{yy},\cdots , e_{yy}))=0$. 	
We therefore get $$\mathbf{f}_i(r+s)=\mathbf{f}_i(r)+\mathbf{f}_i(s).$$
	
Next, applying $L^2$ to $p_{\mathbbm{n}}(re_{xx},se_{xy}, e_{yy}, \cdots, e_{yy}) =rs e_{xy}$ results in 
\begin{equation}\label{e3.42}
p_{\mathbbm{n}}(L^2(re_{xx}),se_{xy}, e_{yy}, \cdots, e_{yy})+p_{\mathbbm{n}}(re_{xx},L^2(se_{xy}), e_{yy}, \cdots, e_{yy})=\mathbf{f}_i(rs)e_{xy}.
\end{equation}
Furthermore, we see that  
\begin{equation}\label{e3.43}
	\begin{split}
	p_{\mathbbm{n}}(L^2(re_{xx}),se_{xy}, e_{yy}, \cdots, e_{yy})=& sp_{\mathbbm{n}}(L^2(re_{xx}),e_{xy}, e_{yy}, \cdots, e_{yy})\\
	=& sL^2(p_{\mathbbm{n}}(re_{xx},e_{xy}, e_{yy}, \cdots, e_{yy}))\\
	=& s \mathbf{f}_i(r)e_{xy}.
	\end{split}
\end{equation}
and 
\begin{equation}\label{e3.44}
p_{\mathbbm{n}}(re_{xx},L^2(se_{xy}), e_{yy}, \cdots, e_{yy})=rf_i(s)e_{xy}.
\end{equation}
In view of the relations \eqref{e3.42}-\eqref{e3.44}, we conclude that $\mathbf{f}_i(rs)=\mathbf{f}_i(r)s+r\mathbf{f}_i(s)$.

The above discussion shows that $\mathbf{f}_i$ is an additive derivation on $\mathcal{R}$ for each $i\in \mathcal{I}$.
\end{proof}

{\bf Proof of Theorem 3.1}.
Let ${\bf F}:=\{\mathbf{f}_i|i\in\mathcal{I}\}$, where $\mathbf{f}_i$ is the function determined by \eqref{e3.37} for each $i\in \mathcal{I}$. 
Since $L^2$ leaves $\mathcal{D}(X,\mathcal{R})$ invariant, by Lemma \ref{l3.8}, we say that $L^2$ is a 
quasi-additive induced Lie $n$-derivation associated with ${\bf F}$. 
In view of the fact $L^2=L-{\rm Ad}_{e_L}-L_f$, we have $L={\rm Ad}_{e_L}+L_f+L^2$, where $e_L$ is given 
by \eqref{eq2.3} and $L_f$ is a transitive induced derivation associated with $f$ given by \eqref{e3.30}.
 \hfill $\Box$

If $X$ is finite, the structure of a nonlinear lie-type derivation can be given in a more explicit way. 
Keeping the notations as above, we have the following theorem.

\begin{theorem}
If $X$ is a finite pre-ordered set, then $L={\rm ad}_{e_L}+L_f+\psi_{\bf F}+\kappa$, where $\psi_{\bf F}$ 
is the additive induced Lie $n$-derivation determined by \eqref{eq1.18}-\eqref{eq1.20},
$\kappa$ is a central-valued map annihilating all $(n-1)$-commutators.
\end{theorem}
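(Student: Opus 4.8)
The plan is to combine the decomposition of Theorem~\ref{t3.1} with Proposition~\ref{p1.4}. Since $X$ is finite, it is locally finite, so $I(X,\mathcal{R})=FI(X,\mathcal{R})$; in particular the element $e_L$ of \eqref{eq2.3} lies in $FI(X,\mathcal{R})$, and hence the inner-like derivation ${\rm Ad}_{e_L}$ coincides with the inner derivation ${\rm ad}_{e_L}$. Theorem~\ref{t3.1} then reads $L={\rm ad}_{e_L}+L_f+L^2$, where $L_f$ is the transitive induced derivation attached to the transitive map $f$ of \eqref{e3.30} and $L^2$ is a quasi-additive induced Lie $n$-derivation associated with the family ${\bf F}=\{\mathbf{f}_i\mid i\in\mathcal{I}\}$ of additive derivations of $\mathcal{R}$ fixed by \eqref{e3.37}. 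Thus it suffices to show that $L^2=\psi_{\bf F}+\kappa$, with $\psi_{\bf F}$ the additive induced Lie derivation of \eqref{eq1.18}--\eqref{eq1.20} and $\kappa$ a central-valued map annihilating all $(n-1)$-th commutators.

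First I would check that $\psi_{\bf F}$ is actually defined for the family ${\bf F}$ at hand. Its construction in \eqref{eq1.18}--\eqref{eq1.20} presupposes that $\mathbf{f}_i=0$ whenever ${\bf E}_i\subset FI(X_j,\mathcal{R})$ for a connected component $X_j$ that is infinite; this requirement is vacuous here, since a finite $X$ has no infinite connected component (and, for the same reason, all the sums in \eqref{eq1.20} are finite, so $\psi_{\bf F}$ genuinely takes values in $FI(X,\mathcal{R})$). Next I would verify that this $\psi_{\bf F}$ is itself a quasi-additive induced Lie $n$-derivation associated with the \emph{same} ${\bf F}$: by \eqref{eq1.20} it leaves $\mathcal{D}(X,\mathcal{R})$ invariant; by \eqref{eq1.19} it satisfies $\psi_{\bf F}(re_{xy})=\mathbf{f}_i(r)e_{xy}$ for every $e_{xy}\in{\bf E}_i$ and $r\in\mathcal{R}$; and by \cite[Lemma 2.5]{Yang20212} it is a nonlinear Lie derivation, hence a nonlinear Lie $n$-derivation, because a straightforward induction on the length of the nested commutators $p_k$, using the multilinearity of $p_k$ and the Leibniz-type identity for Lie derivations, shows that every Lie derivation satisfies $(\spadesuit)$.

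With both $L^2$ and $\psi_{\bf F}$ recognised as quasi-additive induced Lie $n$-derivations associated with ${\bf F}$, Proposition~\ref{p1.4} applies directly and yields that $\kappa:=L^2-\psi_{\bf F}$ is a central-valued map annihilating all $(n-1)$-th commutators. Substituting $L^2=\psi_{\bf F}+\kappa$ into $L={\rm ad}_{e_L}+L_f+L^2$ gives $L={\rm ad}_{e_L}+L_f+\psi_{\bf F}+\kappa$, which is the desired conclusion.

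I do not anticipate a real obstacle: the substantive work is already contained in Theorem~\ref{t3.1} and Proposition~\ref{p1.4}, and what remains is bookkeeping. The two points that deserve to be spelled out are that the infiniteness restriction built into the definition of $\psi_{\bf F}$ becomes void once $X$ is finite (so that $\psi_{\bf F}$ is legitimately defined with values in $FI(X,\mathcal{R})$), and that the additive induced Lie \emph{derivation} $\psi_{\bf F}$ does qualify as a Lie $n$-\emph{derivation}, which is precisely what is needed in order to invoke Proposition~\ref{p1.4} for general $n$ rather than only $n=2$.
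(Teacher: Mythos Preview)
Your proposal is correct and follows essentially the same route as the paper: reduce ${\rm Ad}_{e_L}$ to ${\rm ad}_{e_L}$ using finiteness, invoke Theorem~\ref{t3.1} to write $L={\rm ad}_{e_L}+L_f+L^2$, and then apply Proposition~\ref{p1.4} to $L^2$ and $\psi_{\bf F}$ to extract the central-valued remainder $\kappa$. You are in fact more careful than the paper in two places it glosses over---verifying that the infiniteness hypothesis in the construction of $\psi_{\bf F}$ is vacuous for finite $X$, and checking that the additive induced Lie \emph{derivation} $\psi_{\bf F}$ is also a Lie $n$-derivation so that Proposition~\ref{p1.4} legitimately applies.
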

\begin{proof}
Since $X$ is finite, we have $FI(X,\mathcal{R})=I(X,\mathcal{R})$ and ${\rm Ad}_{e_L}={\rm ad}_{e_L}$. 
By Lemma \ref{l3.8}, we know that $L^2$ is an additive induced Lie $n$-derivation associated with ${\bf F}:=\{\mathbf{f}_i|i\in\mathcal{I}\}$. 
By Proposition \ref{p1.4}, $L^2-\psi_F$ is a central-valued map annihilating all $(n-1)$-th commutators. 
Let us write $\kappa=L^2-\psi_{\bf F}$.  
Then $L={\rm ad}_{e_L}+L_f+\psi_{\bf F}+\kappa$, which is the desired result.
\end{proof}

\section{Nonlinear Lie-type Derivations with Proper Form}
\label{xxsec4}
Thus far, we prove that every nonlinear Lie $n$-derivation on $FI(X,\mathcal{R})$ is the 
sum of an inner-like derivation, a transitive 
induced derivation and a quasi-additive induced Lie $n$-derivation (cf. Theorem \ref{t3.1}).
In this section, we will give a sufficient and necessary condition such that each nonlinear lie-type derivation 
of a finitary incidence algebra is of proper form. This provides an affirmative answer to \cite[Problem 2.15]{KhrypchenkoWei}.
Throughout this section, we keep the notions and notations in Section \ref{xxsec3}. 

Let us first establish the following proposition. 

\begin{proposition}\label{p4.1}
Let $\psi$ be a quasi-additive induced Lie $n$-derivation of $FI(X,\mathcal{R})$ associated with 
${\bf F}:=\{\mathbf{f}_i\mid i\in \mathcal{I}\}$. 
Then $\psi$ is proper if and only if $\mathbf{f}_i=\mathbf{f}_j$ whenever $\mathbf{E}_i$ and $\mathbf{E}_j$ are 
contained in the same connected component of $FI(X,\mathcal{R})$.
\end{proposition}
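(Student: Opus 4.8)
The plan is to prove the two implications separately, in both cases tracking how the additive derivations $\mathbf{f}_i$ are recorded by the diagonal idempotents $e_{xx}$. For the necessity, suppose $\psi=D+\kappa$, where $D$ is a derivation of $FI(X,\mathcal{R})$ and $\kappa$ is a central-valued map annihilating all $(n-1)$-th commutators. Since every central element is diagonal, $\kappa(\beta)(x,y)=0$ for all $x<y$ and all $\beta\in FI(X,\mathcal{R})$; hence for $e_{xy}\in\mathbf{E}_i$ we get $\mathbf{f}_i(r)=\psi(re_{xy})(x,y)=D(re_{xy})(x,y)$. First I would expand $re_{xy}=(re_{xx})e_{xy}$ and apply the Leibniz rule to obtain $D(re_{xy})(x,y)=D(re_{xx})(x,x)+r\,D(e_{xy})(x,y)$; but $D(e_{xy})(x,y)=\psi(e_{xy})(x,y)-\kappa(e_{xy})(x,y)=\mathbf{f}_i(1)-0=0$, so in fact $\mathbf{f}_i(r)=D(re_{xx})(x,x)$ for every vertex $x\in V(\mathbf{E}_i)$. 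Expanding instead $re_{xy}=e_{xy}(re_{yy})$ yields likewise $\mathbf{f}_i(r)=D(re_{yy})(y,y)$.

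Now suppose $x\sim z$ lie in a common connected component $X_k$, say $x<z$ (the case $z<x$ is symmetric). Then $e_{xz}\in\mathbf{E}_m$ for some $m$, and applying the previous identities to $\mathbf{E}_m$ at its two endpoints gives $D(re_{xx})(x,x)=\mathbf{f}_m(r)=D(re_{zz})(z,z)$. By transitivity of connectedness, the function $r\mapsto D(re_{xx})(x,x)$ is therefore constant over each connected component, so $\mathbf{f}_i$ depends only on the connected component containing $\mathbf{E}_i$; in particular $\mathbf{f}_i=\mathbf{f}_j$ whenever $\mathbf{E}_i$ and $\mathbf{E}_j$ lie in the same connected component of $FI(X,\mathcal{R})$.

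For the sufficiency, assume $\mathbf{f}_i=\mathbf{f}_j$ whenever $\mathbf{E}_i,\mathbf{E}_j$ lie in one connected component, and write $g_k$ for the common additive derivation of $\mathcal{R}$ attached to the component $X_k$. Define $D\colon FI(X,\mathcal{R})\longrightarrow FI(X,\mathcal{R})$ entrywise by $D\big(\sum_{x\le y}r_{xy}e_{xy}\big)=\sum_{x\le y}g_{k(x,y)}(r_{xy})e_{xy}$, where $k(x,y)$ indexes the connected component containing $x$ and $y$. Since $g_k(0)=0$, this is well defined and preserves finitary functions, and a direct computation with the convolution product—splitting $g_k$ along its Leibniz rule, which is legitimate because $x\le y\le z$ forces $x,y,z$ into one component—shows that $D$ is a derivation of $FI(X,\mathcal{R})$. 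By construction $D$ leaves $\mathcal{D}(X,\mathcal{R})$ invariant and $D(re_{xy})=g_k(r)e_{xy}=\mathbf{f}_i(r)e_{xy}$ for $e_{xy}\in\mathbf{E}_i$, so $D$ is itself a quasi-additive induced Lie $n$-derivation associated with the same family $\mathbf{F}$. Hence $\kappa:=\psi-D$ is the difference of two quasi-additive induced Lie $n$-derivations associated with $\mathbf{F}$, and Proposition~\ref{p1.4} shows that $\kappa$ is a central-valued map annihilating all $(n-1)$-th commutators; thus $\psi=D+\kappa$ is proper.

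I expect the main obstacle to be the necessity part, specifically establishing the identity $\mathbf{f}_i(r)=D(re_{xx})(x,x)$ and its independence of the chosen vertex of a connected component. Everything there flows from the two factorizations $re_{xy}=(re_{xx})e_{xy}=e_{xy}(re_{yy})$ and the Leibniz rule, but one must use carefully that $\kappa$ is central (so it contributes nothing to the relevant off-diagonal entries) and that $\mathbf{f}_i(1)=0$ (so the cross term $r\,D(e_{xy})(x,y)$ drops out). Once this is secured, the sufficiency direction reduces to checking that the entrywise map $D$ is a derivation and then quoting Proposition~\ref{p1.4}.
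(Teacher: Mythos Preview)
Your proof is correct and follows essentially the same strategy as the paper. For necessity, both you and the paper use the Leibniz rule on the factorisations $re_{xy}=(re_{xx})e_{xy}=e_{xy}(re_{yy})$ to identify $\mathbf{f}_i(r)$ with a diagonal datum attached to each vertex, then propagate this along adjacent pairs to make it constant on connected components; your version is slightly more economical in that you only track the single entry $D(re_{xx})(x,x)$ rather than first proving $D(re_{xx})=f^x_x(r)e_{xx}$ in full. For sufficiency, both arguments define the same entrywise map $D$ from the per-component derivations $g_k$, verify it is a derivation, and then invoke Proposition~\ref{p1.4} to conclude that $\psi-D$ is central-valued and annihilates $(n-1)$-th commutators.
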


\begin{proof}
Firstly, we assume that $\psi$ is proper. Then $\psi=D+\kappa$, where $D$ is an 
additive nonlinear derivation and $\kappa$ is a central-valued map annihilating all $(n-1)$-th commutators of $FI(X,\mathcal{R})$.
Since $\psi$ be a quasi-additive induced Lie $n$-derivation associated with ${\bf F}:=\{\mathbf{f}_i\mid i\in \mathcal{I}\}$, 
we have $\psi(re_{xy})=\mathbf{f}_i(r)e_{xy}$ for all $r\in \mathcal{R}$, $e_{xy}\in \mathbf{E}_i, i\in \mathcal{I}.$ 
Note that $\kappa$ is a central-valued map. Then for all $r\in\mathcal{R}$, we get 
\begin{equation}
D(re_{xy})=\mathbf{f}_i(r)e_{xy}, \ e_{xy}\in \mathbf{E}_i,\  i\in \mathcal{I}.\label{eq4.1}
\end{equation}
It is clear that $D$ leaves $\mathcal{D}(X,\mathcal{R})$ invariant. Then for each $x\in X$, we can 
assume $D(re_{xx})=\sum\limits_{u\in X} f^x_{u}(r)e_{uu}$, where  $\{\, f^x_u\, |\, x,u\in X\, \}$ are functions on $\mathcal{R}$. 
For each pair $x\neq y$ in $X$, we have
\begin{equation*}
0=D(re_{xx}e_{yy})=D(re_{xx})e_{yy}+re_{xx}D(e_{yy})=f^x_y(r)e_{yy}+rf^y_x(1)e_{xx}.
\end{equation*}
This implies that $f^x_y=0$ if $x\neq y$. And hence $D(re_{xx})=f^x_x(r)e_{xx}$ for all $x\in X$, $r\in \mathcal{R}$.
By invoking the relation \eqref{eq4.1}, we see that $D(e_{xy})=0$ for $x<y$. We therefore conclude 
\begin{eqnarray}
&&D(re_{xy})=D(re_{xx}e_{xy})=D(re_{xx})e_{xy}+re_{xx}D(e_{xy})=f_x^x(r)e_{xy},\label{eq4.2}\\
&&D(re_{xy})=D(e_{xy}re_{yy})=D(e_{xy})re_{yy}+e_{xy}D(re_{yy})=f_y^y(r)e_{xy}.\label{eq4.3}
\end{eqnarray}
This forces that $f^x_x=f^y_y$ for all $x<y$. Then for each connected component $X_l$, $l\in \mathcal{J}$, we obtain
\begin{equation}\label{eq4.4}
f_u^u=f^v_v, \ u,v\in X_l.
\end{equation}

Now suppose $e_{xy}\in \mathbf{E}_i$, then by \eqref{eq4.1}-\eqref{eq4.2} we have 
\begin{equation}\label{eq4.5}
f^x_x(r)e_{xy}=D(re_{xy})=\mathbf{f}_i(r)e_{xy}.
\end{equation}
Combining \eqref{eq4.4} with \eqref{eq4.5} gives $\mathbf{f}_i=\mathbf{f}_j$ whenever $\mathbf{E}_i$ and $\mathbf{E}_j$ 
are contained in the same connected component of $FI(X,\mathcal{R})$.

Conversely, suppose that $\mathbf{f}_k=\mathbf{f}_l$ if $\mathbf{E}_k$ and $\mathbf{E}_l$ are 
contained in the same connected component of $FI(X,\mathcal{R})$. 
Then for each $j\in \mathcal{J}$, let $f_j=\mathbf{f}_i$ such 
that $\mathbf{E}_i$ is contained in $FI(X_j,\mathcal{R})$.
Then we can define a map $\Psi:FI(X,\mathcal{R})\longrightarrow FI(X,\mathcal{R})$ by

\begin{eqnarray}
&{\Psi}\left( {\sum\limits_{x \le y} {{r_{xy}}{e_{xy}}} } \right) = \sum\limits_{x \le y} {{\Psi}({r_{xy}}{e_{xy}})} ,\quad{r_{xy}} \in \mathcal{R},\label{eq4.6}\\
&{\Psi }(r{e_{xy}}) = f_j(r){e_{xy}},\ r\in\mathcal{R},\ x\leq y\in X_j,\ j\in \mathcal{J}.\label{eq4.7}
\end{eqnarray}
Next we will show that $\Psi$ is an additive nonlinear derivation of $FI(X,\mathcal{R})$. By the relations
\eqref{eq4.6}-\eqref{eq4.7}, it is easy to see that $\Psi$ is additive. It is sufficient for us to prove that 
\begin{equation}\label{eq4.8}
\Psi(re_{xy}se_{uv})=\Psi(re_{xy})se_{uv}+re_{xy}\Psi(se_{uv}),\ \mathrm{for}\ \mathrm{all}\ r,s\in \mathcal{R},\ x\leq y,\ u\leq v.
\end{equation}
If $y\neq u$, then it is clear that the both sides of \eqref{eq4.8} equal to $0$. If $y=u$, then we can 
assume that $x,y,v\in X_j$ for some $j\in \mathcal{J}$. Thus we obtain
$$
\begin{aligned}
\Psi(re_{xy}se_{uv})&= \Psi(rse_{xv})\\
&= f_j(rs)e_{xv}\\
&= (f_j(r)s+rf_j(s))e_{xv}\\
&=f_j(r)se_{xy}e_{uv}+rf_j(s)e_{xy}e_{uv}\\
&= \Psi(re_{xy})se_{uv}+re_{xy}\Psi(se_{uv}).
\end{aligned}
$$
This implies that $\Psi$ is an additive nonlinear derivation of $FI(X,\mathcal{R})$. It follows from  
Proposition \ref{p1.4} that $\kappa:=\psi-\Psi$ is a central-valued map annihilating all $(n-1)$-th commutators.
So $\psi=\Psi+\kappa$ has the proper form.
\end{proof}

To move on, we need more notations. Recall that for each $i\in \mathcal{I}$, 
\begin{equation}
V(\mathbf{E}_i)=\{\, x\, |\,  e_{xv}\ \mathrm{or}\ e_{ux}\in E_i\ {\rm{for\ some}} \ u,v\in X \, \}.
\end{equation}
Suppose $V(\mathbf{E}_i)\subset X_l$ for some $l\in \mathcal{J}$. For each $x\in V(\mathbf{E}_i)$, let $V_x$ be 
the set consisting of $x$ and the elements $y\in X_l\setminus V(\mathbf{E}_i)$ such that 
$x \sim {x_1},{x_1} \sim {x_2}, \cdots ,{x_{m - 1}} \sim {x_m},{x_m} \sim y$ for some 
${x_1} ,{x_2}, \cdots ,{x_m} \notin V(\mathbf{E}_i)$, $m\geq 0$.  Here $m=0$ is well understood as $x\sim y$.

\begin{lemma}\label{l4.2}
\begin{itemize}
\item[(1)] If $x,y\in V(\mathbf{E}_i)$ and $x\neq y$, then $V_x\cap V_y= \emptyset$.
\item[(2)] Suppose $\mathbf{E}_i$ is contained in a connected component $FI(X_l,\mathcal{R})$ of $FI(X,\mathcal{R})$, $l\in \mathcal{J}$, then we have $$\bigcup_{x\in V(E_i)} V_x=X_l.$$
\item[(3)] If $x\simeq y$ in $\mathbf{E}_i$, then $V_x=\{x\}$ and $V_y=\{y\}$.
\end{itemize}
\end{lemma}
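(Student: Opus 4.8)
The plan is to treat \textbf{(1)}--\textbf{(3)} as purely combinatorial statements about $X$ under $\sim$, using only two elementary facts about $\approx$: \emph{(a)} if $e,e'\in\mathbf{E}$ satisfy $e\approx e'$ and $e\neq e'$, then by definition there is a cycle containing both $e$ and $e'$; and \emph{(b)} if a cycle contains one edge belonging to $\mathbf{E}_i$, then every edge of that cycle belongs to $\mathbf{E}_i$ (immediate from the definition of $\approx$ and the fact that $\{\mathbf{E}_i\mid i\in\mathcal{I}\}$ are the $\approx$-classes). I would also record the consequence that for any $x,y\in V(\mathbf{E}_i)$ there is a walk $x\sim u_1\sim\cdots\sim u_m\sim y$ all of whose edges lie in $\mathbf{E}_i$: choosing edges of $\mathbf{E}_i$ incident to $x$ and to $y$, either they coincide and $x,y$ are its two endpoints, or they are distinct and a cycle through both (by \emph{(a)}) passes through $x$ and through $y$ and, by \emph{(b)}, has all its edges in $\mathbf{E}_i$, so an arc of that cycle is the desired walk.

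For \textbf{(3)}, suppose $e_{xy}\in\mathbf{E}_i$ with $x\simeq y$ and that some $z\in V_x$ has $z\neq x$. Then $z\in X_l\setminus V(\mathbf{E}_i)$ and there is a path $x\sim x_1\sim\cdots\sim x_m\sim z$ ($m\geq 0$) with $x_1,\dots,x_m\notin V(\mathbf{E}_i)$. Let $w$ be the neighbour of $x$ along this path ($w=x_1$ if $m\geq1$, $w=z$ if $m=0$); then $w\neq x$, $w\notin V(\mathbf{E}_i)$, and $x\sim w$. Since $y\in V(\mathbf{E}_i)$ we get $w\neq y$. From $x\simeq y$ and $x\sim w$, transitivity of $\leqslant$ gives $y\sim w$ (if $x\leqslant w$ then $y\leqslant x\leqslant w$; if $w\leqslant x$ then $w\leqslant x\leqslant y$). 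Hence $\{x,w,y\}$ is a cycle (the $m\geq3$ case of the definition) containing the edge joining $x$ and $w$ as well as $e_{xy}\in\mathbf{E}_i$, so by \emph{(b)} the edge joining $x$ and $w$ lies in $\mathbf{E}_i$, forcing $w\in V(\mathbf{E}_i)$ --- a contradiction. Thus $V_x=\{x\}$, and $V_y=\{y\}$ by the symmetric argument.

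For \textbf{(1)}, let $x\neq y$ in $V(\mathbf{E}_i)$ and suppose $z\in V_x\cap V_y$. Note $z\neq x$: otherwise $x=z\in V_y$, which (as $x\neq y$) would force $x\in X_l\setminus V(\mathbf{E}_i)$, impossible; similarly $z\neq y$; hence $z\in X_l\setminus V(\mathbf{E}_i)$. Splicing the two defining paths at $z$ gives a walk from $x$ to $y$ in which every edge is incident to an interior vertex, and all interior vertices lie outside $V(\mathbf{E}_i)$; in particular no edge of the walk joins $x$ to $y$. Extracting a simple path we obtain $P\colon x=v_0\sim v_1\sim\cdots\sim v_k=y$ with $k\geq2$ and $v_1,\dots,v_{k-1}\notin V(\mathbf{E}_i)$. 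By the recorded consequence, pick a simple path $Q\colon y=w_0\sim w_1\sim\cdots\sim w_l=x$ ($l\geq1$) whose edges all lie in $\mathbf{E}_i$. Since the interior of $P$ avoids $V(\mathbf{E}_i)$ while every vertex of $Q$ lies in $V(\mathbf{E}_i)$, the paths $P$ and $Q$ meet only in $\{x,y\}$, so $v_0\sim v_1\sim\cdots\sim v_{k-1}\sim y\sim w_1\sim\cdots\sim w_{l-1}\sim v_0$ is a genuine cycle of length $k+l\geq3$. It contains the last edge of $Q$, which joins $w_{l-1}$ to $x$ and hence lies in $\mathbf{E}_i$; so by \emph{(b)} all its edges lie in $\mathbf{E}_i$, in particular the edge joining $v_0=x$ and $v_1$, whence $v_1\in V(\mathbf{E}_i)$ --- contradicting $v_1\notin V(\mathbf{E}_i)$. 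Therefore $V_x\cap V_y=\emptyset$. Part \textbf{(2)} is then routine: $\bigcup_{x\in V(\mathbf{E}_i)}V_x\subseteq X_l$ is immediate from the definition of $V_x$, and for $z\in X_l$ we have $z\in V_z$ if $z\in V(\mathbf{E}_i)$, while otherwise a shortest path in $X_l$ from $z$ to $V(\mathbf{E}_i)$ (which exists since $X_l$ is connected and $V(\mathbf{E}_i)\neq\emptyset$) ends at some $x\in V(\mathbf{E}_i)$ and, by minimality, has all its other vertices outside $V(\mathbf{E}_i)$; read from $x$, it witnesses $z\in V_x$. The main obstacle is the bookkeeping in \textbf{(1)}: one must be careful to upgrade the concatenation of the ``outside'' path with the $\mathbf{E}_i$-path to an \emph{honest} cycle (vertex-disjointness of their interiors), since fact \emph{(b)} applies only to cycles; once that is in place, \textbf{(1)}--\textbf{(3)} are short.
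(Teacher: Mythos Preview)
Your proof is correct and follows essentially the same combinatorial strategy as the paper: for (1) you concatenate an ``outside'' path (through $X_l\setminus V(\mathbf{E}_i)$) with a path whose edges lie in $\mathbf{E}_i$ to produce a cycle forcing an interior vertex into $V(\mathbf{E}_i)$, and (2) is the same connectivity argument. The one genuine difference is in (3): the paper deduces it from (1) (observing that $x\simeq y$ makes any $z\in V_x\setminus\{x\}$ also lie in $V_y$), whereas you give a self-contained argument by exhibiting the $3$-cycle $\{x,w,y\}$. Your treatment is also more scrupulous than the paper's about extracting simple paths and checking that the concatenation really has pairwise distinct vertices before invoking fact~(b); the paper's proof of (1) asserts a cycle without explicitly verifying distinctness of the $u_j$ from one another or ruling out $z\in\{x,y\}$, so your extra bookkeeping is warranted rather than superfluous.
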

\begin{proof}
(1) Suppose $V_x\cap V_y\neq \emptyset$, then there exists an element $z\in V_x\cap V_y$. So there are $ {x_1} ,{x_2}, \cdots ,{x_m} \notin V({\bf E}_i)$ 
such that $x \sim {x_1},{x_1} \sim {x_2}, \cdots ,{x_{m - 1}} \sim {x_m},{x_m} \sim z$. Similarly, 
there are $ {y_1} ,{y_2}, \cdots ,{y_k} \notin V(\mathbf{E}_i)$ such that $y \sim {y_1},{y_1} \sim {y_2}, \cdots ,{y_{k - 1}} \sim {y_k},{y_k} \sim z$. 
Without loss of generality, we can assume that $\{x_1,\cdots, x_m\}\cap \{y_1,\cdots, y_k\}=\emptyset$. 
Otherwise, there exists a minimal number $j$ in $\{1,2,\cdots, m\}$ such that $x_j\in \{x_1,\cdots, x_m\}\cap \{y_1,\cdots, y_k\}$. 
And hence $x_j\in  V_x\cap V_y$. In this case, we can replace $z$ by $x_j$.
On the other hand, since $x,y \in V(\mathbf{E}_i)$, there are $u_1,u_2,\cdots u_s \in V(\mathbf{E}_i)$ such 
that $x\sim u_1,u_1\sim u_2,\cdots, u_s\sim y$. Thus we obtain a cycle 
$x\sim u_1,u_1\sim u_2,\cdots, u_s\sim y, y \sim {y_1},{y_1} \sim {y_2}, \cdots ,{y_{k - 1}} \sim {y_k},{y_k} \sim z, z\sim x_m,\cdots, x_1\sim x$. 
This implies that $e_{x_{j-1}x_{j}}$ (or $e_{x_{j}x_{j-1}}$ if  $x_{j}<x_{j-1}$) is 
contained in $\mathbf{E}_i$, where $1\leq j\leq m+1$. Here $x_0$ and $x_{m+1}$ are considered as $x$ and $y$, respectively. 
But this is contradictory to the fact that $ {x_1} ,{x_2}, \cdots ,{x_m}, y \notin V(\mathbf{E}_i)$.  So we claim that $V_x\cap V_y\neq \emptyset$.

(2) Since $X_l$ is connected, then for each element $y\in X_l\setminus V(\mathbf{E}_i)$, there are 
$ {x_1} ,{x_2}, \cdots ,{x_m} \notin V(\mathbf{E}_i)$ and some $x\in V(E_i)$ such that 
$x \sim {x_1},{x_1} \sim {x_2}, \cdots ,{x_{m - 1}} \sim {x_m},{x_m} \sim y$.
That implies that for each $y\in X_l\setminus V(\mathbf{E}_i)$, there exists $x\in V(\mathbf{E}_i)$ 
such that $y\in V_x$. Hence we conclude that $\bigcup\limits_{x\in V(\mathbf{E}_i)} V_x=X_l.$

(3) If there exists $z\in V_x\setminus \{x\}$, then we see that $z\in V_y$, which is due to the fact $x\simeq y$. 
This is contradictory to the fact $V_x\cap V_y= \emptyset$. So we assert that $V_x=\{x\}$. 
In an analogous manner, one can show that $V_y=\{y\}$.
\end{proof}

Now we are in a position to state the main theorem of this section.

\begin{theorem}
Every nonlinear Lie $n$-derivation of $FI(X,\mathcal{R})$ is proper if and only if 
each connected component in $FI(X,\mathcal{R})$ contains at most one $\mathbf{E}_i$ for some $i\in \mathcal{I}$. 
This is equivalent to saying that $e_{xy}\approx e_{uv}$ if $x<y$ and $u<v$ are both contained in the same connected component of $X$.
\end{theorem}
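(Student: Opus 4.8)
The plan is to deduce the theorem from the structure theorem (Theorem \ref{t3.1}) together with the properness criterion for quasi-additive induced Lie $n$-derivations (Proposition \ref{p4.1}), after first reducing to that case. By Theorem \ref{t3.1} every nonlinear Lie $n$-derivation $L$ of $FI(X,\mathcal{R})$ splits as $L={\rm Ad}_{e_L}+L_f+L^2$, where ${\rm Ad}_{e_L}$ is inner-like, $L_f$ is transitive induced, and $L^2$ is a quasi-additive induced Lie $n$-derivation associated with some family ${\bf F}=\{\mathbf{f}_i\mid i\in\mathcal{I}\}$. Since ${\rm Ad}_{e_L}$ and $L_f$ are additive derivations, and the class of additive derivations is closed under sums and differences while adjoining an additive derivation to a proper map leaves it proper, $L$ is proper precisely when $L^2$ is proper. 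As every quasi-additive induced Lie $n$-derivation is itself a nonlinear Lie $n$-derivation, the theorem is thereby reduced to the statement that every quasi-additive induced Lie $n$-derivation of $FI(X,\mathcal{R})$ is proper if and only if each connected component contains at most one $\mathbf{E}_i$.

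For the sufficiency, assume each connected component contains at most one $\mathbf{E}_i$. Then $\mathbf{E}_i$ and $\mathbf{E}_j$ lying in the same component forces $i=j$, so $\mathbf{f}_i=\mathbf{f}_j$ holds trivially, and the ``if'' part of Proposition \ref{p4.1} shows every quasi-additive induced Lie $n$-derivation, hence every nonlinear Lie $n$-derivation, is proper. The two formulations of the condition coincide because each $\approx$-class is contained in a single connected component (an edge-cycle is connected), so ``at most one $\mathbf{E}_i$ per component'' is the same as saying that $e_{xy}\approx e_{uv}$ whenever $x<y$ and $u<v$ lie in the same connected component of $X$.

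For the necessity I argue the contrapositive: suppose a component $FI(X_l,\mathcal{R})$ contains two distinct classes $\mathbf{E}_i\neq\mathbf{E}_j$. Fix a nonzero additive derivation $\delta$ of $\mathcal{R}$ and set $\mathbf{f}_i=\delta$ and $\mathbf{f}_k=0$ for $k\neq i$, so that $\mathbf{f}_i\neq\mathbf{f}_j$; by the ``only if'' part of Proposition \ref{p4.1} it suffices to produce one quasi-additive induced Lie $n$-derivation $\psi$ associated with this ${\bf F}$, since such a $\psi$ is automatically not proper. If $X_l$ is finite, $\psi$ is the additive induced Lie derivation $\psi_{\bf F}$ of \eqref{eq1.18}--\eqref{eq1.20}, whose side condition only restricts classes sitting inside infinite components and is therefore satisfied here. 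If $X_l$ is infinite, formula \eqref{eq1.20} breaks down because its diagonal part would be an infinite, non-finitary sum; here I would build $\psi$ by hand, putting $\psi(re_{xy})=\mathbf{f}_{[e_{xy}]}(r)e_{xy}$ for $x<y$ and fixing the diagonal values $\psi(e_{xx})$ by solving the relations forced by the Lie $n$-derivation identity, organised along the decomposition $X_l=\bigcup_{x\in V(\mathbf{E}_i)}V_x$ of Lemma \ref{l4.2} and using the freedom coming from the fact that a quasi-additive induced Lie $n$-derivation need not be additive to keep every resulting element finitary. In either case $\psi$ is a nonlinear Lie $n$-derivation that fails to be proper.

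The main obstacle is exactly this infinite-component construction: the clean additive formula is no longer available, so one has to exhibit a genuinely non-additive quasi-additive induced Lie $n$-derivation and verify the Lie $n$-derivation identity for it; the argument also tacitly relies on $\mathcal{R}$ possessing a nonzero additive derivation.
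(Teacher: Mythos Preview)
Your overall strategy matches the paper's exactly: reduce via Theorem~\ref{t3.1} to quasi-additive induced Lie $n$-derivations, invoke Proposition~\ref{p4.1} for sufficiency, and for necessity construct a single improper quasi-additive induced Lie $n$-derivation associated with a family ${\bf F}$ having $\mathbf{f}_i\neq 0$ and $\mathbf{f}_k=0$ for $k\neq i$. The gap you identify in your own argument---the infinite-component case---is exactly what the paper handles, and it does so with a single explicit formula rather than a case split. Fix a basepoint $t\in V(\mathbf{E}_i)$ and define the diagonal part by
\[
\psi\Bigl(\sum_{x\in X} r_{xx}e_{xx}\Bigr)=\sum_{x\in V(\mathbf{E}_i)}\ \sum_{y\in V_x}\bigl(\mathbf{f}_i(r_{xx})-\mathbf{f}_i(r_{tt})\bigr)e_{yy}.
\]
This is always a diagonal element, hence finitary regardless of whether $X_l$ is finite or infinite; the subtraction of $\mathbf{f}_i(r_{tt})$ is what makes the map non-additive but well-defined. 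The verification that $\psi$ is a nonlinear Lie derivation (hence Lie $n$-derivation) uses precisely the disjointness $V_x\cap V_y=\emptyset$ and the covering $\bigcup_{x\in V(\mathbf{E}_i)}V_x=X_l$ from Lemma~\ref{l4.2}, together with the observation that if $e_{uv}\notin\mathbf{E}_i$ then $u,v$ lie in the same $V_z$. So your intuition about Lemma~\ref{l4.2} being the organising tool is correct; you were just missing the basepoint trick.

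Your closing remark about the tacit reliance on $\mathcal{R}$ admitting a nonzero additive derivation is well taken: the paper's proof simply asserts ``$\mathbf{f}_i\neq 0$'' without comment. If every additive derivation of $\mathcal{R}$ vanishes (e.g.\ $\mathcal{R}=\mathbb{Z}$), then every family ${\bf F}$ is identically zero, Proposition~\ref{p4.1} makes every quasi-additive induced Lie $n$-derivation proper, and the ``only if'' direction as stated fails. This is a genuine hidden hypothesis in the paper.
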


\begin{proof}
Firstly, we assume that each connected component of $FI(X,\mathcal{R})$ contains at most one $\mathbf{E}_i$ for some $i\in \mathcal{I}$. 
By Theorem \ref{t3.1} it follows that each nonlinear Lie $n$-derivation $L$ of $FI(X,\mathcal{R})$ is the 
sum of an inner-like derivation, a transitive induced derivation and a quasi-additive induced Lie $n$-derivation. 
Note that inner-like derivations and transitive induced derivations are naturally additive derivations. Then 
$L$ is proper if and only the corresponding quasi-additive induced Lie $n$-derivation is proper. 
Since each connected component in $FI(X,\mathcal{R})$ contains at most one $\mathbf{E}_i$ for some 
$i\in \mathcal{I}$, by Proposition \ref{p4.1} we say that every quasi-additive induced Lie 
$n$-derivation of $FI(X,\mathcal{R})$ is proper. Hence every nonlinear Lie $n$-derivation of 
$FI(X,\mathcal{R})$ is proper.

Conversely, assume that every nonlinear Lie $n$-derivation of $FI(X,\mathcal{R})$ has proper form. 
We need to prove that each connected component in $FI(X,\mathcal{R})$ contains at most one $\mathbf{E}_i$ for some $i\in \mathcal{I}$.
Suppose that there is a connected component $FI(X_l,\mathcal{R})$ of $FI(X,\mathcal{R})$ which contains 
two or more equivalent classes in $\{\mathbf{E}_i\mid i\in \mathcal{I}\}$. Then we can construct a nonlinear Lie 
$n$-derivation of $FI(X, \mathcal{R})$ which is not proper.

Let $\mathbf{E}_i$ be an equivalent class which is contained in $FI(X_l,\mathcal{R})$, and 
$t\in V(\mathbf{E}_i)$ be a fixed element. Let ${\bf F}:=\{\mathbf{f}_k\mid k\in \mathcal{I}\}$ be a family of additive 
derivations on $\mathcal{R}$ such that $\mathbf{f}_i\neq 0$ and $\mathbf{f}_j=0$ for all $j\neq i$, $j\in \mathcal{I}$. 
Then we can define a map $\psi:FI(X,\mathcal{R})\to FI(X,\mathcal{R})$ as follows:
 \begin{eqnarray}
{\psi}\left({\sum\limits_{x \le y} {r_{xy}{e_{xy}}} } \right) = \sum\limits_{x < y} {{\psi _F}({r_{xy}}{e_{xy}})}+ \psi(\sum\limits_{x\in X} r_{xx}e_{xx}), \quad{r_{xy}} \in \mathcal{R}, \label{eq4.10}\\
{\psi}(r{e_{xy}}) = {\mathbf{f}_k}(r){e_{xy}},\quad{e_{xy}} \in {{\bf{E}}_k},\ k\in \mathcal{J},\ r \in \mathcal{R},\label{eq4.11}\\
\psi(\sum\limits_{x\in X} r_{xx}e_{xx}) = \sum\limits_{x\in V({\bf E}_i)} \sum\limits_{y\in V_x} (\mathbf{f}_i(r_{xx})-\mathbf{f}_i(r_{tt}))e_{yy}.\label{eq4.12}
\end{eqnarray}
Let us next show that $\psi$ is a nonlinear Lie derivation of $FI(X, \mathcal{R})$. And hence it is naturally a 
nonlinear Lie $n$-derivation of $FI(X, \mathcal{R})$. Taking into account the relation \eqref{eq4.10}, we need to prove that
\begin{eqnarray}
\psi([re_{xy}, se_{uv}])=[\psi(re_{xy}), se_{uv}]+[re_{xy}, \psi(se_{uv})],\label{eq4.13}
\end{eqnarray}
and 
\begin{eqnarray}
\psi([\sum\limits_{x\in X} r_{xx}e_{xx}, se_{uv}])=[\psi(\sum\limits_{x\in X} r_{xx}e_{xx}), se_{uv}]+[\sum\limits_{x\in X} r_{xx}e_{xx}, \psi(se_{uv})],\label{eq4.14}
\end{eqnarray}
where $x<y, u<v$, and $r,s, r_{xx} \in \mathcal{R}$ for all $x\in X$.

Let us first prove \eqref{eq4.13}. If $e_{xy}\notin \mathbf{E}_i$ or $e_{uv}\notin \mathbf{E}_i$ , then it is 
not difficult to see that both sides of \eqref{eq4.13} are zero. If $[e_{xy}, e_{uv}]=0$, the both sides of \eqref{eq4.13} are zero as well. 
Now suppose that $e_{xy}, e_{uv}\in \mathbf{E}_i$ and $[e_{xy}, e_{uv}]\neq 0$. By the skew-symmetry property of Lie product, 
we only need to consider two cases: (1) $y=u$ and $x\neq v$; (2) $y=u$ and $x=v$.
For the case (1), we have 
$$
\begin{aligned}
\psi([re_{xy}, se_{uv}])&=\mathbf{f}_i(rs)e_{xv}\\
&=\mathbf{f}_i(r)s+r\mathbf{f}_i(s)e_{xv}\\
&=[\psi(re_{xy}), se_{uv}]+[re_{xy}, \psi(se_{uv})].
\end{aligned}
$$
So \eqref{eq4.13} indeed holds true under the case (1). For the case (2), we see that  $x\simeq y$. And hence $V_x=\{x\}, V_y=\{y\}$  by Lemma \ref{l4.2}. 
By invoking the relation \eqref{eq4.12},  we see that the left hand side of \eqref{eq4.13} is $$\psi([re_{xy}, se_{yx}])=\psi(rs(e_{xx}-e_{yy}))=\mathbf{f}_i(rs)e_{xx}-\mathbf{f}_i(rs)e_{yy},$$
and that the right hand side of \eqref{eq4.13} is 
\begin{equation*}
\begin{split}
[\psi(re_{xy}), se_{uv}]+[re_{xy}, \psi(se_{uv})]=&[\mathbf{f}_i(r)e_{xy}, se_{uv}]+[re_{xy}, \mathbf{f}_i(s)e_{uv}]\\
=&\mathbf{f}_i(r)s(e_{xx}-e_{yy})+r\mathbf{f}_i(s)(e_{xx}-e_{yy})\\
=&\mathbf{f}_i(rs)(e_{xx}-e_{yy}).
\end{split}
\end{equation*}
We also get \eqref{eq4.13} under the case (2).

Let us next prove the relation \eqref{eq4.14}. If $e_{uv}\notin \mathbf{E}_i$, by (2) of Lemma \ref{l4.2} we 
assert that $u,v\in V_z$ for some $z\in V(\mathbf{E}_i)$, which is due to the fact $u\sim v$. 
Suppose $e_{uv}\in \mathbf{E}_j$ for some $j\in \mathcal{I}$. Then 
\begin{eqnarray*}
\psi([\sum\limits_{x\in X} r_{xx}e_{xx}, se_{uv}])=\psi((r_{uu}-r_{vv})se_{uv})=\mathbf{f}_j((r_{uu}-r_{vv})s)e_{uv}=0.
\end{eqnarray*}
On the other hand, we have 
\begin{eqnarray*}
&&[\psi(\sum\limits_{x\in X} r_{xx}e_{xx}), se_{uv}]+[\sum\limits_{x\in X} r_{xx}e_{xx}, \psi(se_{uv})]\\
&&=[\sum\limits_{x\in V({\bf E}_i)} \sum\limits_{y\in V_x} (\mathbf{f}_i(r_{xx})-\mathbf{f}_i(r_{tt}))e_{yy}, se_{uv}]+[\sum\limits_{x\in X} r_{xx}e_{xx}, \mathbf{f}_j(s)e_{uv}]\\
&&=[\sum\limits_{y\in V_z} (\mathbf{f}_i(r_{zz})-\mathbf{f}_i(r_{tt}))e_{yy}, se_{uv}]\\
&&=0.
\end{eqnarray*}
So we obtain \eqref{eq4.14} whenever $e_{uv}\notin {\bf E}_i$. If $e_{uv}\in \mathbf{E}_i$, then the left hand side of \eqref{eq4.14} is 
\begin{eqnarray*}
\psi([\sum\limits_{x\in X} r_{xx}e_{xx}, se_{uv}])=\psi((r_{uu}-r_{vv})se_{uv})=\mathbf{f}_i((r_{uu}-r_{vv})s)e_{uv},
\end{eqnarray*}
while the right hand side of \eqref{eq4.14} is 
\begin{eqnarray*}
&&[\psi(\sum\limits_{x\in X} r_{xx}e_{xx}), se_{uv}]+[\sum\limits_{x\in X} r_{xx}e_{xx}, \psi(se_{uv})]\\
&&=[\sum\limits_{x\in V(\mathbf{E}_i)} \sum\limits_{y\in V_x} (\mathbf{f}_i(r_{xx})-\mathbf{f}_i(r_{tt}))e_{yy}, se_{uv}]+[\sum\limits_{x\in X} r_{xx}e_{xx}, \mathbf{f}_i(s)e_{uv}]\\
&&=[\mathbf{f}_i(r_{uu})-\mathbf{f}_i(r_{vv})]se_{uv}+(r_{uu}-r_{vv})\mathbf{f}_i(s)e_{uv}\\
&&=\mathbf{f}_i(r_{uu}-r_{vv})se_{uv}+(r_{uu}-r_{vv})\mathbf{f}_i(s)e_{uv}\\
&&=\mathbf{f}_i((r_{uu}-r_{vv})s)e_{uv}.
\end{eqnarray*}
Thus we get \eqref{eq4.14} whenever $e_{uv}\in {\bf E}_i$.  
We therefore conclude that $\psi$ is a nonlinear Lie derivation of $FI(X,\mathcal{R})$. 

In view of the relations \eqref{eq4.11} and \eqref{eq4.12}, we know that $\psi$ is a quasi-additive induced Lie $n$-derivation. 
Since $\mathbf{f}_i\neq 0$ and $\mathbf{f}_j=0$ for all $j\neq i$, and $FI(X_l,\mathcal{R})$ 
contains two or more equivalent classes in $\{\mathbf{E}_j\mid j\in \mathcal{I}\}$ by hypothesis,  
$\psi$ is not proper by Proposition \ref{p4.1}. This is contradictory to the fact that every nonlinear 
Lie $n$-derivation of $FI(X,\mathcal{R})$ is of proper. 

We eventually complete the proof of this theorem.
\end{proof}

\section{Related Topics for Further Research}
\label{xxsec5}

The question of to what extent the multiplicative structure (alternatively speaking, 
the structure of nonlinear maps without additivity) of an algebra determines its
additive structure has been paying considerable attention by many researchers  over the past decades.
In particular, they have investigated under which conditions bijective mappings
between algebras preserving the multiplicative structure necessarily preserve the
additive structure as well. The most fundamental result in this direction is due
to W. S. Martindale III \cite{Mar69} who  proved that every bijective
multiplicative mapping from a prime ring containing a nontrivial idempotent
onto an arbitrary ring is necessarily additive. Later, a number of authors
considered the Jordan-type product or Lie-type product and proved that,
on certain associative algebras or rings, bijective mappings which preserve
any of those products are automatically additive, see
\cite{FosnerWeiXiao, WangWang, XiaoWei, Yang2020, Yang20211, Yang20212}.

Let us come back the topics of finitary incidence algebras. Although the involutions of 
incidence algebras are not involved in the current work, Brusamarello, Lewis, Spiegel and Donnell et al
\cite{BruL, Spiegel2,  SpiegelDonnell} characterized the involutions of incidence algebras and 
got some powerful results towards these additive mappings.  This will bring us some enlightenment, that is, it 
is worthy of studying the Jordan-type and Lie-type derivations with involutions on finitary incidence algebras.

\subsection{(Non-)Linear $\ast$-Lie-type derivations}
\label{xxsec5.1}

Let $\mathcal{R}$ be a commutative ring and $\mathcal{A}$ be an associative $\mathcal{R}$-algebra 
with involution $\ast$. Let $d: \mathcal{A}\longrightarrow \mathcal{A}$ be an $\mathcal{R}$-linear mapping. 
We say that $d$ is a
\textit{$\ast$-Lie derivation} if
$$
d([x, y]_\ast)=[d(x), y]_\ast+[y, d(x)]_\ast,
$$
holds true for all $x,y\in \mathcal{A}$, where $[x, y]_\ast=xy-yx^\ast$ is the so-called $\ast$-Lie product. 
Similarly, an $\mathcal{R}$-linear mapping $d:
\mathcal{A}\longrightarrow \mathcal{A}$ is called a \textit{$\ast$-Lie triple derivation} if it satisfies the condition
$$
d([[x,  y]_\ast,  z]_\ast)=[[d(x), y]_\ast, z]_\ast+[[x, d(y)]_\ast, z]_\ast+[[x, y]_\ast,   d(z)]_\ast
$$
for all $x, y, z\in \mathcal{A}$.

Given the consideration of $\ast$-Lie derivations and $\ast$-Lie triple derivations, one can
further develop them in more general way. Suppose that $n\geq 2$ is a fixed positive
integer. Let us see a sequence of polynomials with involution $\ast$
$$
\begin{aligned}
p_1(x_1)&=x_1,\\
p_2(x_1, x_2)&=[p_1(x_1), x_2]_\ast=[x_1, x_2]_\ast,\\
p_3(x_1, x_2, x_3)&=[p_2(x_1, x_2), x_3]_\ast=[[x_1, x_2]_\ast,  x_3]_\ast\\
p_4(x_1, x_2, x_3, x_4)&=[p_3(x_1, x_2, x_3), x_4]_\ast=[[[x_1, x_2]_\ast, x_3]_\ast, x_4]_\ast\\
& \vdots \ \ \ \ \ \ \ \  \ \ \ \ \ \ \ \ \ \  \vdots \ \ , \\
p_n(x_1, x_2,\cdots, x_n)&=[p_{n-1}(x_1, x_2,\cdots, x_{n-1}), x_n]_\ast\\
&=\underbrace{[[\cdots [[}_{n-1}x_1,  x_2]_\ast, x_3]_\ast, \cdots,  x_{n-1}]_\ast, x_n]_\ast\\
\end{aligned}
$$
A \textit{$\ast$-Lie $n$-derivation} is an $\mathcal{R}$-linear mapping $d:
\mathcal{A} \longrightarrow \mathcal{A}$ satisfying the condition
$$
d(p_n(x_1, x_2,\cdots, x_n))=\sum_{k=1}^n
p_n(x_1,\cdots, x_{k-1}, d(x_k), x_{k+1},\cdots, x_n)
$$
for all $x_1,x_2,\cdots, x_n\in \mathcal{A}$. By the definition, it is clear that every $\ast$-Lie derivation is an
$\ast$-Lie 2-derivation and each $\ast$-Lie triple derivation is an $\ast$-Lie 3-derivation.
One can easily check that each nonlinear $\ast$-Lie derivation on $\mathcal{A}$ is
a nonlinear $\ast$-Lie triple derivation. But, we don't know whether the converse statement is true.
$\ast$-Lie 2-derivations, $\ast$-Lie 3-derivations and $\ast$-Lie $n$-derivations
are collectively referred to as \textit{$\ast$-Lie-type derivations}. Those $\ast$-Lie-type derivations 
without the additivity assumption are called \textit{nonlinear $\ast$-Lie-type derivations}.
(Non-)Linear $\ast$-Lie-type derivations on various algebras are intensively studied 
by some researchers,  see \cite{Lin1, Lin2, Lin3}.

\begin{question}\label{Ques5.1}
Let $\mathcal{R}$ be a 2-torsionfree and $(n-1)$-torsionfree commutative ring with identity, 
and $FI(X, \mathcal{R})$ be the finitary incidence algebra over a pre-ordered set $(X, \leq)$.   
What can we say about (non-)linear $\ast$-Lie-type derivations of $FI(X, \mathcal{R})$ ? 
Is every (non-)linear $\ast$-Lie-type derivation on $FI(X, \mathcal{R})$ an additive $\ast$-Lie derivation ? 
Can we fond a reasonable condition which enables every (non-)linear $\ast$-Lie-type derivation on $FI(X, \mathcal{R}) $ to be an 
additive $\ast$-derivation ? 
\end{question}

\subsection{(Non-)Linear $\ast$-Jordan-type derivations}
\label{xxsec5.2}

Let $\mathcal{R}$ be a commutative ring and $\mathcal{A}$ be an associative $\mathcal{R}$-algebra 
with involution $\ast$. Let $d: \mathcal{A}\longrightarrow \mathcal{A}$ be a mapping
(without the additivity assumption). We say that $d$ is a
\textit{nonlinear $\ast$-Jordan derivation} if
$$
d(x \circ_\ast y)=d(x) \circ_\ast y+y \circ_\ast d(x),
$$
holds true for all $x,y\in \mathcal{A}$, where $x\circ_\ast y=xy+yx^\ast$ is the so-called $\ast$-Jordan product. 
Similarly, a mapping $d:
\mathcal{A}\longrightarrow \mathcal{A}$ is called a \textit{nonlinear $\ast$-Jordan triple derivation} if it satisfies the condition
$$
d(x \circ_\ast y \circ_\ast z)=d(x) \circ_\ast y \circ_\ast z+x \circ_\ast d(y)\circ_\ast z+x \circ_\ast y \circ_\ast d(z)
$$
for all $x, y, z\in \mathcal{A}$, where $x \circ_\ast y \circ_\ast  z=(x \circ_\ast y) \circ_\ast z$ .
We should be aware that the $\ast$-Jordan product $\circ_\ast$ is not necessarily associative.

Given the consideration of $\ast$-Jordan derivations and $\ast$-Jordan triple derivations, one can
further develop them in more general way. Suppose that $n\geq 2$ is a fixed positive
integer. Let us see a sequence of polynomials with involution $\ast$
$$
\begin{aligned}
p_1(x_1)&=x_1,\\
p_2(x_1, x_2)&=p_1(x_1)\circ_\ast x_2=x_1\circ_\ast x_2,\\
p_3(x_1, x_2, x_3)&=p_2(x_1, x_2) \circ_\ast x_3=(x_1\circ_\ast x_2) \circ_\ast x_3\\
p_4(x_1, x_2, x_3, x_4)&=p_3(x_1, x_2, x_3) \circ_\ast x_4=((x_1 \circ_\ast x_2) \circ_\ast x_3) \circ_\ast x_4,\\
& \vdots \ \ \ \ \ \ \ \  \ \ \ \ \ \ \ \ \ \  \vdots \ \ , \\
p_n(x_1, x_2,\cdots, x_n)&=p_{n-1}(x_1, x_2,\cdots, x_{n-1}) \circ_\ast x_n\\
&=\underbrace{(\cdots ((}_{n-2}x_1 \circ_\ast x_2)\circ_\ast x_3) \cdots \circ_\ast x_{n-1}) \circ_\ast x_n\\
\end{aligned}
$$
A \textit{nonlinear $\ast$-Jordan $n$-derivation} is a mapping $d:
\mathcal{A} \longrightarrow \mathcal{A}$ satisfying the condition
$$
d(p_n(x_1, x_2,\cdots, x_n))=\sum_{k=1}^n
p_n(x_1,\cdots, x_{k-1}, d(x_k), x_{k+1},\cdots, x_n)
$$
for all $x_1,x_2,\cdots, x_n\in \mathcal{A}$. This notion makes the best use of the definition of $\ast$-Jordan-type derivation
and that of $\ast$-Lie-type derivation,  see \cite{FosnerWeiXiao, Jing, Lin1, Lin2, Lin3}.
By the definition, it is clear that every $\ast$-Jordan derivation is an
$\ast$-Jordan 2-derivation and each $\ast$-Jordan triple derivation is an $\ast$-Jordan 3-derivation.
One can easily check that each nonlinear $\ast$-Jordan derivation on $\mathcal{A}$ is
a nonlinear $\ast$-Jordan triple derivation. But, we don't know whether the converse statement is true.
$\ast$-Jordan 2-derivations, $\ast$-Jordan 3-derivations and $\ast$-Jordan $n$-derivations
are collectively referred to as \textit{$\ast$-Jordan-type derivations}.

\begin{question}\label{Ques5.2}
Let $\mathcal{R}$ be a 2-torsionfree and $(n-1)$-torsionfree commutative ring with identity, 
and $FI(X, \mathcal{R})$ be the finitary incidence algebra over a pre-ordered set $(X, \leq)$.   
What can we say about (non-)linear $\ast$-Jordan-type derivations of $FI(X, \mathcal{R})$ ? 
Is every (non-)linear $\ast$-Jordan-type derivation on $FI(X, \mathcal{R})$ an additive $\ast$-derivation ? 
\end{question}

Although the main purpose of the current article is to study
nonlinear Lie-type derivations of finitary incidence algebras, the structure of biderivations
of finitary incidence algebras also has a great interest and much more attention should be paid on them.
In addition, we extend the definition of Lie biderivation to the case of arbitrary Lie-type
biderivation in a general way. Given the parallels pointed out in many literatures
between Lie structure and Jordan structure, we propose the definition of Jordan-type biderivation.
It is natural to ask whether properties known for Lie-type biderivations might also be valid for Jordan-type biderivations. We give several sample
questions of this sort.  In this section, we will present several
potential topics sharing common interest for our future research in this line.

\subsection{(Non-)Linear biderivations}
\label{xxsec5.3}

Let $\mathcal{R}$ be a commutative ring with identity and $\mathcal{A}$ be an associative $\mathcal{R}$-algebra 
with center $\mathcal{Z(A)}$.
An $\mathcal{R}$-bilinear mapping $\varphi: \mathcal{A}\times \mathcal{A}\longrightarrow \mathcal{A}$
is a \textit{biderivation} if it is a derivation
with respect to both components, that is
$$
\varphi(xz, y)=\varphi(x,y)z+x\varphi(z,y)~\text{and}~\varphi(x,yz)=\varphi(x,y)z+y\varphi(x,z)
$$
for all $x, y, z\in \mathcal{A}$. A biderivation which 
does not necessarily satisfy the bilinear condition is called a \textit{nonlinear biderivation}. 
If the algebra $\mathcal{A}$ is noncommutative,
then the mapping $\varphi(x, y)=\lambda[x, y]$ for all $x,y\in \mathcal{A}$ and
some $\lambda\in \mathcal{Z(A)}$ is called an \textit{inner biderivation}.
An $\mathcal{R}$-bilibear mapping $\varphi: \mathcal{A}\times \mathcal{A}\longrightarrow \mathcal{A}$
is said to be an \textit{extremal biderivation}
if it is of the form $\varphi(x,y)=[x,[y, a]]$ for all $x, y\in \mathcal{A}$ and some $a\in \mathcal{A}, a\notin \mathcal{Z(A)}$.
A map $f: \mathcal{A}\longrightarrow \mathcal{A}$ is called \textit{centralizing} on $\mathcal{A}$ if $[f(x), x]\in \mathcal{Z(A)}$ 
for all $x\in \mathcal{A}$. In the special case when $[f(x), x]=0$ for all $x\in \mathcal{A}$, $f$ is called \textit{commuting} on $\mathcal{A}$. 
The notion of additive commuting maps is closely related to the notion of biderivations. In fact, every commuting 
additive mapping $f$ on $\mathcal{A}$ gives raise to a biderivation of $\mathcal{A}$: linearizing $[f(x), x]=0$, we 
observe that $[f(x), y]=[x, f(y)]$ for all $x, y\in \mathcal{A}$, so the mapping $(x, y) \longmapsto [f(x), y]$
is easily seen to be a biderivation of $\mathcal{A}$ that is inner in each component (induced by $f(y)$ and $f(x)$, 
respectively). A biderivation $\varphi$ of $\mathcal{A}$ is called an \textit{inner biderivation} if there exists 
$\lambda\in \mathcal{Z(A)}$  such that $\varphi(x, y)=\lambda [x, y]$ for all $x, y\in \mathcal{A}$. 
Inner biderivations appear quite naturally in characterizing biderivations of associative algebras.

Let $\mathcal{C}$ be a commutative domain with identity. 
It is well-known that each biderivation of block upper triangular algebra $\mathcal{B}_n^{\bar{k}}(\mathcal{C})$ 
the sum of an extremal biderivation and an inner biderivation. In particular, every biderivation of $\mathcal{T}_n(\mathcal{C})$ 
is the sum of an extremal biderivation and an inner biderivation, see \cite{Benkovic}. Let $\mathcal{A}$ be unital algebra and 
$\mathcal{M}_n(\mathcal{A})$ be the full matrix algebra over $\mathcal{A}$. Du and Wang \cite{DuWang} proved that 
each biderivation on $\mathcal{M}_n(\mathcal{A})$ is inner. To study Poisson structure of incidence algebras, 
Kaygorodov and Khrypchenko describe detailedly antisymmetric biderivations of incidence algebras \cite{KK}.
 In view of the relationships between matrix algebras and 
incidence algebras, it is natural to consider biderivations of finitary incidence algebras.

\begin{question}\label{Ques5.3}
Let $\mathcal{R}$ be a 2-torsionfree and $({ n}-1)$-torsionfree commutative ring with identity, 
and $FI(X, \mathcal{R})$ be the finitary incidence algebra over a pre-ordered set $(X, \leq)$.   
What can we say about (non-)linear  biderivations of $FI(X, \mathcal{R})$ ? 
or, every (non-)linear biderivation on  $FI(X, \mathcal{R})$ is the sum of an extremal 
biderivation and an inner biderivation. 
\end{question}

For an arbitrary associative algebra $\mathcal{A}$, if each biderivation of $\mathcal{A}$ is inner, 
then every commuting map of $\mathcal{A}$ is of the so-called standard form or proper form. However, 
Jia and Xiao  \cite{JiaXiao} observed that the commuting maps of incidence algebras are not necessarily 
proper form. We therefore speculate that certain biderivations on $\mathcal{I}(X, \mathcal{R})$ are not inner.

\subsection{(Non-)Linear Lie-type biderivations}
\label{xxsec5.4}

Let $\mathcal{R}$ be a commutative ring with identity and $\mathcal{A}$ be an associative $\mathcal{R}$-algebra with center $\mathcal{Z(A)}$.
Mimicking the definitions of Lie biderivations in our sense and Lie-type derivations in the sense of $(\spadesuit)$, one can give the definition of
arbitrary Lie-type biderivations on $\mathcal{A}$.
An $\mathcal{R}$-bilinear mapping $\varphi:
\mathcal{A}\times \mathcal{A}\longrightarrow \mathcal{A}$ is called a
\textit{Lie $n$-biderivation} if it is a Lie $n$-derivation
with respect to both components, implying that
$$
\varphi(p_n(x_1,x_2,\dots,x_n), y)=\sum_{i=1}^n p_n(x_1,\dots,x_{i-1}, \varphi(x_i, y),x_{i+1},\dots,x_n)
$$
and
$$
\varphi(x, p_n(y_1, y_2,\dots, y_n))=\sum_{i=1}^n p_n(y_1,\dots, y_{i-1}, \varphi(x, y_i), y_{i+1},\dots, y_n)
$$
holds for all $x_1,x_2,\dots,x_n\in \mathcal{A}$.
 It is clear that
biderivations and inner biderivations are specific examples of Jordan biderivations .
By definition, a Lie biderivation is a Lie $2$-biderivation and a Lie triple biderivation is a Lie
$3$-biderivation. It is straightforward to check that every Lie $n$-biderivation
on $\mathcal{A}$ is a Lie $(n+k(n-1))$-biderivation for all $k\in \Bbb{N}_0$.
Lie $2$-biderivations, Lie $3$-biderivations and Lie $n$-biderivations are
collectively referred to as \textit{Lie-type biderivations}. Those Lie-type biderivations which 
do not necessarily satisfy the bilinear condition are said to be \textit{nonlinear Lie-type biderivations}. 
Let us see the concrete construction on
Lie triple biderivations.  A mapping $\varphi: \mathcal{A}\times \mathcal{A}\longrightarrow \mathcal{A}$
is a \textit{nonlinear Lie triple biderivation} if it is a Lie triple derivation
with respect to both components, implying that
$$
\varphi([w, [x,z]],y)=[\varphi(w, y), [x, z]]+[w, [\varphi(x,y), z]]+[w, [x,\varphi(z,y)]]
$$
and
$$
\varphi(x,[[y,z], w])=[[\varphi(x,y), z], w]+[[y,\varphi(x,z)], w]+[[y, z], \varphi(x, w)]
$$
for all $x,y\in \mathcal{A}$.

\begin{question}\label{Ques5.4}
Let $\mathcal{R}$ be a 2-torsionfree and $({n}-1)$-torsionfree commutative ring with identity, 
and $FI(X, \mathcal{R})$ be the finitary incidence algebra over a pre-ordered set $(X, \leq)$.   
What can we say about (non-)linear Lie-type biderivations of $FI(X, \mathcal{R})$ ? 
We want to know what kind of decomposition formula can be obtained for $FI(X, \mathcal{R})$ ?
\end{question}

\subsection{(Non-)Linear Jordan-type biderivations}\label{xxsec5.5}
Let $\mathcal{R}$ be a commutative ring with identity and $\mathcal{A}$ be an associative $\mathcal{R}$-algebra with center $\mathcal{Z(A)}$.
Denote by $x\circ y=xy+yx$ the Jordan product of elements $x, y\in \mathcal{A}$.
An $\mathcal{R}$-linear (or an additive) map $d: \mathcal{A}\longrightarrow \mathcal{A}$ is called
a \textit{Jordan derivation} if $d(x\circ y)=d(x)\circ y+x\circ d(y)$ holds for all $x, y\in \mathcal{A}$,
or equivalently, if $d(x^2)=d(x)x+xd(x)$ holds for all $x\in \mathcal{A}$ in the case that the characteristic
of $\mathcal{R}$ is not 2; is called a \textit{Jordan triple derivation} if
$d(x\circ y \circ z)=d(x)\circ y \circ z+x\circ d(y) \circ z+x\circ y\circ d(z)$
 holds for all $x, y, z\in \mathcal{A}$, where $x\circ y \circ z=(x \circ y) \circ z$.

 Given the consideration of Jordan derivations and Jordan triple derivations, we can
further develop them in one natural way. Suppose that $n\geq 2$ is a fixed positive
integer. Let us see a sequence of polynomials
$$
\begin{aligned}
q_1(x_1)&=x_1,\\
q_2(x_1,x_2)&=x_1 \circ x_2=x_1x_2+x_2x_1,\\
q_3(x_1,x_2,x_3)&=(q_2(x_1,x_2)) \circ x_3=(x_1 \circ x_2) \circ x_3,\\
q_4(x_1,x_2,x_3,x_4)&=(q_3(x_1,x_2,x_3)) \circ x_4=((x_1 \circ x_2) \circ x_3) \circ x_4,\\
\cdots &\cdots,\\
q_n(x_1,x_2,\cdots,x_n)&=(q_{n-1}(x_1,x_2,\cdots,x_{n-1})) \circ x_n\\
&=\underbrace{(\cdots ((}_{n-2}x_1 \circ x_2)\circ x_3) \cdots \circ x_{n-1}) \circ x_n.
\end{aligned}
$$
The polynomial $q_n(x_1,x_2,\cdots,x_n)=(q_{n-1}(x_1, x_2,\cdots x_{n-1}))\circ x_n$
for $n\geq 2$, which is called the \textit{Jordan $n$-product} of $x_1, x_2, \cdots, x_n$.
Accordingly, a \textit{Jordan $n$-derivation} is an $\mathcal{R}$-linear mapping $\varphi:
\mathcal{A} \longrightarrow \mathcal{A}$ satisfying the condition
$$
\delta(q_n(x_1, x_2,\cdots, x_n))=\sum_{i=1}^n
q_n(x_1,\cdots, x_{i-1}, \delta(x_k), x_{i+1},\cdots, x_n)
$$
for all $x_1, x_2,\cdots, x_n\in \mathcal{A}$. This notion makes the best use of the definition of Lie-type derivations
and that of $\ast$-Lie-type derivations,  see \cite{Abdullaev, BenkovicEremita, FosnerWeiXiao}.
By the definition, it is clear that every Jordan derivation is a
Jordan 2-derivation and each Jordan triple derivation is a Jordan 3-derivation.
Jordan $2$-derivations, Jordan $3$-derivations and Jordan $n$-derivations are
collectively referred to as \textit{Jordan-type derivations}.
One can easily check that each Jordan derivation on $\mathcal{A}$ is
a Jordan triple derivation. But, we don't know whether the converse statement is true.
An important formula $[[x, y], z]=x\circ (y\circ z)-y \circ (x\circ z)$, shows that every Jordan derivation is also a Lie triple derivation.
Therefore, studying Lie triple derivations enables us to treat both important
class of Jordan derivations and Lie derivations simultaneously.
More recently, Jordan-type derivations on triangular algebras, prime rings, matrix algebras,
nest algebras and von Neumann algebras are considered by Lin,  Qi and Zhao et al, see
\cite{Lin3, YuZhang1, ZhaoLi1}.

An $\mathcal{R}$-bilinear mapping $\varphi: \mathcal{A}\times \mathcal{A}\longrightarrow \mathcal{A}$
is a \textit{Jordan biderivation} if it is a Jordan derivation
with respect to both components, implying that
$$
\varphi(x\circ y, z)=\varphi(x, z)\circ y+x\circ \varphi(y, z)~\text{and}~\varphi(x, y\circ z)=\varphi(x,y)\circ z+y\circ \varphi(x, z)
$$
for all $x,y\in \mathcal{A}$.  Generally speaking, an $\mathcal{R}$-bilinear mapping $\varphi:
\mathcal{A}\times \mathcal{A}\longrightarrow \mathcal{A}$ is called a
\textit{Jordan $n$-biderivation} if it is a Jordan $n$-derivation
with respect to both components, implying that
$$
\varphi(q_n(x_1,x_2,\dots,x_n), y)=\sum_{i=1}^n q_n(x_1,\dots,x_{i-1}, \varphi(x_i, y),x_{i+1},\dots,x_n)
$$
and
$$
\varphi(x, q_n(y_1, y_2,\dots, y_n))=\sum_{i=1}^n q_n(y_1,\dots, y_{i-1}, \varphi(x, y_i), y_{i+1},\dots, y_n)
$$
holds for all $x_1,x_2,\dots,x_n\in \mathcal{A}$.
It is clear that
biderivations and inner biderivations are specific examples of Jordan biderivations.
By definition, a Jordan biderivation is a Jordan 2-biderivation and a Jordan triple biderivation is a Jordan
3-biderivation. Jordan 2-biderivations, Jordan 3-biderivations and Jordan $n$-biderivations are
collectively referred to as \textit{Jordan-type biderivations}.  It is straightforward to check that every Jordan bibiderivation
on $\mathcal{A}$ is a Lie triple biderivation.
Those Jordan-type biderivations which 
do not necessarily satisfy the bilinear condition are said to be \textit{nonlinear Jordan-type biderivations}. 
Let us see the intuitional and concrete construction on
Lie triple biderivations.  A mapping $\varphi: \mathcal{A}\times \mathcal{A}\longrightarrow \mathcal{A}$
is a \textit{nonlinear Jordan triple biderivation} if it is a Lie triple derivation
with respect to both components, implying that
$$
\varphi(w\circ x \circ z, y)=\varphi(w, y)\circ x\circ z+w\circ \varphi(x,y)\circ z+w\circ x\circ \varphi(z,y)
$$
and
$$
\varphi(x, y\circ z\circ w)=\varphi(x,y)\circ z\circ w+y\circ\varphi(x,z)\circ w+y\circ z\circ \varphi(x, w)
$$
for all $x,y\in \mathcal{A}$.

\vspace{2mm}

By \cite{Xiao} and \cite{Kh-Jor} we can say that 

\begin{proposition}\label{Prop5.5}
Let $\mathcal{R}$ be a 2-torsionfree commutative ring with identity, $(X, \leq)$ be a locally finite pre-ordered set, 
and $FI(X, \mathcal{R})$ be the finitary incidence algebra over $(X, \leq)$.   Then each 
bilinear Jordan biderivation on a incidence algebra $FI(X, \mathcal{R})$ is a biderivation. 
\end{proposition}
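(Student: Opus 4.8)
The plan is to deduce the proposition from the already-known single-variable result, since a bilinear Jordan biderivation is, by definition, nothing but a Jordan derivation in each of its two arguments. Explicitly, if $\varphi\colon FI(X,\mathcal{R})\times FI(X,\mathcal{R})\longrightarrow FI(X,\mathcal{R})$ is a bilinear Jordan biderivation, then for each fixed $z\in FI(X,\mathcal{R})$ the map $d_z:=\varphi(\,\cdot\,,z)$ is an $\mathcal{R}$-linear map satisfying $d_z(x\circ y)=d_z(x)\circ y+x\circ d_z(y)$ for all $x,y$, i.e. $d_z$ is an $\mathcal{R}$-linear Jordan derivation of $FI(X,\mathcal{R})$; likewise, for each fixed $x\in FI(X,\mathcal{R})$ the map $e_x:=\varphi(x,\,\cdot\,)$ is an $\mathcal{R}$-linear Jordan derivation of $FI(X,\mathcal{R})$.

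First I would invoke the structural description of Jordan derivations of incidence algebras. Since $(X,\leq)$ is locally finite we have $FI(X,\mathcal{R})=I(X,\mathcal{R})$, and since $\mathcal{R}$ is $2$-torsionfree, the main result of Xiao~\cite{Xiao} (see also Khrypchenko~\cite{Kh-Jor} for the finitary incidence algebra of an arbitrary pre-ordered set) states that every $\mathcal{R}$-linear Jordan derivation of $FI(X,\mathcal{R})$ is in fact an $\mathcal{R}$-linear derivation. Applying this to $d_z$ for each fixed $z$ yields
\[
\varphi(xy,z)=\varphi(x,z)\,y+x\,\varphi(y,z)\qquad\text{for all }x,y,z\in FI(X,\mathcal{R}),
\]
and applying it to $e_x$ for each fixed $x$ yields
\[
\varphi(x,yz)=\varphi(x,y)\,z+y\,\varphi(x,z)\qquad\text{for all }x,y,z\in FI(X,\mathcal{R}).
\]

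These two identities are precisely the two defining conditions for $\varphi$ to be a biderivation, so the proof is complete. The only nontrivial ingredient is the single-variable theorem that Jordan derivations of (finitary) incidence algebras over a $2$-torsionfree ring are derivations; once this is granted there is no further obstacle, the passage from the Jordan product $\circ$ to the associative product being performed independently in each slot. In writing this up I would only take care to check that the torsionfreeness hypothesis in the proposition matches exactly what the cited single-variable results require (it does), and that local finiteness is the only combinatorial assumption needed—indeed even that can be dropped if one cites \cite{Kh-Jor} directly for the finitary setting.
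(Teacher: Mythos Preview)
Your proposal is correct and follows exactly the approach the paper indicates: the paper does not give an explicit proof but simply prefaces the proposition with ``By \cite{Xiao} and \cite{Kh-Jor} we can say that'', and your argument is precisely the natural unpacking of that citation---apply the single-variable Jordan-derivation theorem in each slot separately. There is nothing to add.
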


\begin{question}\label{Ques5.6}
Let $\mathcal{R}$ be a 2-torsionfree and $({n}-1)$-torsionfree commutative ring with identity, 
and $FI(X, \mathcal{R})$ be the finitary incidence algebra over a pre-ordered set $(X, \leq)$.   
What can we say about (non-)linear Jordan-type derivations of $FI(X, \mathcal{R})$ ? 
or, every (non-)linear Jordan-type biderivation on $FI(X, \mathcal{R})$ is an additive 
biderivation ? 

\end{question}

\addtocontents{toc}{
    \protect\settowidth{\protect\@tocsectionnumwidth}{}%
    \protect\addtolength{\protect\@tocsectionnumwidth}{0em}}


\begin{thebibliography}{}


 \bibitem[1]{Abdullaev}
I. Z. Abdullaev,
{\em $n$-Lie derivations on von
Neumann algebras}, Uzbek. Mat. Zh., \textbf{5-6} (1992), 3-9.



 \bibitem[2]{Baclawski} 
K. Baclawski, 
{\em Automorphisms and derivations of incidence algebras}, 
Proc. Amer. Math. Soc., \textbf{36} (1972), 351-356.



\bibitem[3]{Benkovic} 
D. Benkovi\v c, 
{\em Biderivations on triangular matrices}, 
Linear Algebra Appl., \textbf{431} (2009), 1587-1602.







\bibitem[4]{BenkovicEremita}
D. Benkovi\v c and D. Eremita,
{\em Multiplicative Lie $n$-derivations of triangular rings}, Linear Algebra Appl., 
\textbf{436} (2012), 4223-4240. 


 
 \bibitem[5]{Bre93} 
M. Bre\v sar, 
{\it Commuting traces of biadditive maps, commutativity-preserving
maps and Lie maps}, Trans. Amer. Math Soc. {\bf 335} (1993), 525-546.



\bibitem[6]{BruFK1} 
R. Brusamarello, E. Z. Fornaroli and M. Khrypchenko,
{\it Jordan automorphisms of finitary incidence algebras},
Linear Multilinear Algebra, {\bf 66} (2018), 565-579.




\bibitem[7]{BruFK2} 
R. Brusamarello, E. Z. Fornaroli and M. Khrypchenko,
{\it Jordan isomorphisms of the finitary incidence rings of a partially ordered category},
Colloq. Math., {\bf 159} (2020), 285-307.




\bibitem[8]{BruFS3} 
R. Brusamarello, E. Z. Fornaroli and E. A. Santulo Jr,
{\it Multiplicative automorphisms of incidence algebras},
Comm. Algebra, {\bf 43} (2015), 726-736.



\bibitem[9]{BruL} 
R. Brusamarello and D. Lewis, 
{\it Antomorphisms and involutions
on incidence algebras}, Linear Multilinear Algebra, {\bf 59} (2011), 1247-1267.



\bibitem[10]{ChenZhang} 
L. Chen and J.-H. Zhang, 
{\em Nonlinear Lie derivations on upper triangular matrices}. 
Linear Multilinear Algebra, {\bf56}(2008), 725-730.



\bibitem[11]{CourtemancheDugasHerden}
J Courtemanche, M. Dugas and D. Herden,
{Local automorphisms of finitary incidence algebras}, 
Linear Algebra Appl., \textbf{541} (2018), 221-257.



\bibitem[12]{DuWang} 
Y.-Q. Du and Y. Wang, 
{\em Biderivations of generalized matrix algebras}, 
Linear Algebra Appl., \textbf{438} (2013), 4483-4499.



\bibitem[13]{Dugas}
M. Dugas,
{Homomorphisms of finitary incidence algebras}, 
Comm. Algebra, \textbf{40} (2012), 2373-2384.



\bibitem[14]{DugasWagner}
M. Dugas and B. Wagner,
{\em Finitary incidence algebras and idealizations}, 
Linear Multilinear Algebra, \textbf{64} (2016), 1936-1951.



\bibitem[15]{DugasHerdenRebrovich1}
M. Dugas, D. Herden and J. Rebrocivh,
{\em Normal subgroups of the group of units of incidence algebras}, 
Linear Algebra Appl.,  \textbf{586} (2020), 64-88.



\bibitem[16]{DugasHerdenRebrovich2}
M. Dugas, D. Herden and J. Rebrocivh,
{\em Indecomposable ideals of finitary incidence algebras}, 
J. Pure Appl. Algebra, \textbf{224} (2020), no. 8, 106336.




\bibitem[17]{FosnerWeiXiao}  
A. Fo\v sner, F. Wei and Z.-K. Xiao,
{\em Nonlinear Lie-type derivations of von Neumann algebras and related topics},
Colloq. Math., \textbf{132} (2013), 53-71.



\bibitem[18]{Her} 
I. N. Herstein, 
{\it Lie and Jordan structures in simple associative rings},
Bull. Amer. Math. Soc.,  {\bf 67} (1961), 517-531.



\bibitem[19]{JiLiuZhao} 
P.-S. Ji, R.-R. Liu and Y.-Z. Zhao, 
{\em Nonlinear Lie triple derivations of triangular algebras},
Linear Multilinear Algebra,  {\bf 60} (2012), 1155-1164.


\bibitem[20]{JiaXiao} 
H.-Y. Jia and Z.-K. Xiao, 
{\em Commuting maps on certain incidence algebras}, 
Bull. Iranian Math. Soc., \textbf{46} (2020), 755-765.



\bibitem[21]{Jing} 
W. Jing, 
{\em  Nonlinear $\ast$-Lie derivations of standard operator algebras}, 
Quaest. Math., \textbf{39} (2016), 1037-1046.




\bibitem[22]{KK} 
I. Kaygorodov and M. Khrypchenko, 
{\em Poisson structures on finitary incidence algebras}, J. Algebra, 
\textbf{578} (2021), 402-420. 




\bibitem[23]{KKW} 
I. Kaygorodov, M. Khrypchenko, and F. Wei,  
{\em Higher derivations of finitary incidence algebras}, Algebra and Represent Theory, 
\textbf{22} (2019), 1331-1341. 



\bibitem[24]{Kh-aut} 
N. S. Khripchenko, 
{\em Automorphisms of finitary incidence rings}, Algebra Discrete Math., \textbf{9} (2010), 78-97.



\bibitem[25]{Kh-der} 
N. S. Khripchenko, 
{\em Derivations of finitary incidence rings}, Comm. Algebra, \textbf{40} (2012), 2503-2522.




\bibitem[26]{Kh-Jor} 
M. Khrypchenko, 
{\em  Jordan derivations of finitary incidence rings}, Linear Multilinear Algebra, 
\textbf{64} (2016), 2104-2118.



\bibitem[27]{Kh-loc} 
M. Khrypchenko, 
{\em Local derivations of finitary incidence rings}, Acta Math. Hungar., 
\textbf{154} (2018), 48-55. 



\bibitem[28]{KN} 
N.S. Khripchenko and B.V. Novikov, 
{\em Finitary incidence algebras}, Comm. Algebra \textbf{37} (2009), 1670-1676.




\bibitem[29]{KhrypchenkoWei} 
M. Khrypchenko and F. Wei, 
{\em Lie-type derivations of finitary incidence algebras}, 
Rocky Mountain J. Math., \textbf{50} (2020), 101-113. 




\bibitem[30]{Kopp} 
M. Koppinen, 
{\it Automorphisms and higher derivations of incidence algebras},
J. Algebra, {\bf 174} (1995), 698-723.




\bibitem[31]{Lin1} W.-H. Lin,  
{\em Nonlinear $\ast$-Lie-type derivations on standard operator algebras}, 
Acta Math. Hungar., \textbf{154} (2018), 480-500.




\bibitem[32]{Lin2} W.-H. Lin,  
{\em Nonlinear $\ast$-Lie-type derivations on von Neumann algebras}, 
Acta Math. Hungar., \textbf{156} (2018), 112-131. 




\bibitem[33]{Lin3} W.-H. Lin,  
{\em Nonlinear $\ast$-Jordan-type derivations on von Neumann algebras}, 
\href{https://arxiv.orb/abs/1805.02037}{arXiv: 1805.02037. [math.OA]}.



\bibitem[34]{Mar64} 
W. S. Martindale III, 
{\it Lie derivations of primitive rings},
Michigan Math. J., {\bf 11} (1964), 183-187.



\bibitem[35]{Mar69} 
W. S. Martindale III, 
{\it  When are multiplicative mappings additive ?},  Proc Amer Math Soc.
\textbf{21} (1969), 695-698.



\bibitem[36]{Qi1}
X.-F. Qi,
{\em Characterizing Lie  $n$-derivations for reflexive algebras},
Linear Multilinear Algebra, \textbf{63} (2015), 1693-1706.



\bibitem[37]{Qi2}
X.-F. Qi,
{\em Lie $n$-derivations on $\mathcal{J}$-subspace lattice algebras}, 
Proc. Indian Acad. Sci. Math. Sci., \textbf{127} (2017), 537-545. 




\bibitem[38]{Spiegel1} 
E. Spiegel, 
{\it On the automorphisms of incidence algebras},
J. Algebra, {\bf 239} (2001), 615-623.




\bibitem[39]{Spiegel2} 
E. Spiegel, 
{\it Involutions in incidence algebras},
Linear Algebra Appl., , {\bf 405} (2005), 155-162.



\bibitem[40]{SpiegelDonnell} 
E. Spiegel and C. O'Donnell, 
{\it Incidence Algebras},
Monographs and Textbooks in Pure and Applied Mathematics, vol. \textbf{206}, Marcel Dekker,
New York, 1997.



\bibitem[41]{Stanley1} 
R. Stanley, 
{\it Structure of incidence algebras and their automorphism groups},
Bull. Amer. Math. Soc., \textbf{76} (1970), 1236-1239.



\bibitem[42]{Stanley2} 
R. Stanley, 
{\it Enumerative Combinatorics}, vol. 1. With a foreword by
Gian-Carlo Rota. Cambridge Studies in Advanced Mathematics, vol. {\bf 49},
Cambridge University Press, Cambridge, 1997.



\bibitem[43]{WangXiao} 
D.-N. Wang and Z.-K. Xiao, 
{\it Lie triple derivations of incidence algebras},
Comm. Algebra,  \textbf{47} (2019), 1841-1852.



\bibitem[44]{Wang}
Y. Wang,
{\em Lie $n$-derivations of unital algebras with idempotents},
Linear Algebra Appl., \textbf{458} (2014), 512-525.



\bibitem[45]{WangWang}
Y. Wang and Y. Wang,
{\em Multiplicative Lie $n$-derivations of generalized matrix algebras},
Linear Algebra Appl., \textbf{438} (2013), 2599-2616.




\bibitem[46]{Ward} 
M. Ward, 
{\it Arithmetic functions on rings},
Ann. of Math., {\bf 38} (1937), 725-732.




\bibitem[47]{Xiao} 
Z.-K. Xiao, 
{\it Jordan derivations of incidence algebras},
Rocky Mountain J. Math., {\bf 45} (2015), 1357-1368.




\bibitem[48]{XiaoWei} 
Z.-K. Xiao and F. Wei, 
{\em Nonlinear lie-type derivations on full matrix algebras},
Monatsh. Math., {\bf 170} (2013), 77-88.




\bibitem[49]{XiaoYang} 
Z.-K.Xiao and Y.-P.Yang, 
{\em Lie $n$-derivations of incidence algebras}, 
Comm. Algebra, \textbf{48} (2020), 105-118.



\bibitem[50]{Yang2020}
Y.-P. Yang,
{\em Nonlinear derivations of incidence algebras}, Acta Math. Hungar., 
\textbf{162} (2020),  52-61.



\bibitem[51]{Yang20211}
Y.-P. Yang,
{\em Nonlinear Lie derivations of incidence algebras}, Oper. Matrices, 
\textbf{15} (2021),  275-292.



\bibitem[52]{Yang20212}
Y.-P. Yang,
{\em Nonlinear Lie derivations of incidence algebras of finite rank},
Linear Multilinear Algebra, to appear, 
\href{https://doi.org/10.1080/03081087.2019.1635979}{https://doi.org/10.1080/03081087.2019.1635979}.



\bibitem[53]{YuZhang1} 
W.-Y. Yu and J.-H. Zhang, 
{\em Nonlinear $\ast$-Lie derivations on factor von Neumann algebras}, Linear Algebra
Appl., \textbf{437} (2012), 1979-1991.



\bibitem[54]{YuZhang2} 
W.-Y. Yu and J.-H. Zhang, 
{\em Lie triple derivations of CSL algebras}, 
Internat. J. Theoret. Phys., {\bf 52},(2013), 2118-2127.



\bibitem[55]{ZhangKhrypchenko}
X. Zhang and M. Khrypchenko, 
{\em Lie derivations of incidence algebras},  
Linear Algebra Appl., \textbf{513} (2017), 69-83.




\bibitem[56]{ZhaoLi1} 
F.-F. Zhao and C.-J. Li,  
{\em Nonlinear $\ast$-Jordan triple derivations on von Neumann algebras}, 
Math. Slovaca, \textbf{68} (2018), 163-170.

\end{thebibliography}
\end{document}